\newtheorem{theorem}{Theorem}[section]
\newtheorem{lemma}[theorem]{Lemma}
\newtheorem{proposition}[theorem]{Proposition}
\newtheorem{definition}[theorem]{Definition}
\newtheorem{remark}[theorem]{Remark}
\newenvironment{Assumption}[1]
{\innerAssumption}
{\endinnerAssumption}
\numberwithin{equation}{section}
\newcommand{\cF}{\mathcal{F}}
\newcommand{\cH}{\mathcal{H}}
\newcommand{\cS}{\mathcal{S}}
\def \Ac{{\cal A}}
\def \Bc{{\cal B}}
\def \Fc{{\cal F}}
\def \Gc{{\cal G}}
\def \Hc{{\cal H}}
\def \Pc{{\cal P}}
\def \Wc{{\cal W}}
\def \E{\mathbb{E}}
\def \F{\mathbb{F}}
\def \H{\mathbb{H}}
\def \N{\mathbb{N}}
\def \P{\mathbb{P}}
\def \R{\mathbb{R}}
\def \S{\mathbb{S}}
\def \trans{^{\scriptscriptstyle{\intercal }}}
\def \eps{\varepsilon}
\renewcommand*{\@fnsymbol}[1]{\ensuremath{\ifcase#1\or *\or **\or 
\mathsection\or \mathparagraph\or \|\or **\or \dagger\dagger
\or \ddagger\ddagger \else\@ctrerr\fi}}
\def\blfootnote{\xdef\@thefnmark{}\@footnotetext}\makeatother
\begin{document}

\title{A Tikhonov theorem for McKean-Vlasov two-scale systems\\ and a new application to mean field optimal control problems}
\author{Matteo Burzoni\footnote{Universit\`a degli Studi di Milano, Dipartimento di Matematica ``Federigo Enriques'', via Saldini 50, 20133 Milano, Italy; \texttt{matteo.burzoni@unimi.it}}, \; \ Alekos Cecchin\footnote{Universit\`a degli Studi di Padova, Dipartimento di Matematica ``Tullio Levi-Civita'', via Trieste 63, 35121 Padova, Italy; \texttt{alekos.cecchin@unipd.it}}, \; \ Andrea Cosso\footnote{Universit\`a degli Studi di Milano, Dipartimento di Matematica ``Federigo Enriques'', via Saldini 50, 20133 Milano, Italy; \texttt{andrea.cosso@unimi.it}}}

\date{\today}
\maketitle
\abstract{\noindent We provide a new version of the Tikhonov theorem for both two-scale forward systems and also two-scale forward-backward systems of stochastic differential equations, which also covers the McKean-Vlasov case. Differently from what is usually done in the literature, we prove a type of convergence for the ``fast'' variable, which allows the limiting process to be discontinuous. This is relevant for the second part of the paper, where we present a new application of this theory to the approximation of the solution of mean field control problems. Towards this aim, we construct a two-scale system whose ``fast'' component converges to the optimal control process, while the ``slow'' component converges to the optimal state process. The interest in such a procedure is that it allows to approximate the solution of the control problem avoiding the usual step of the minimization of the Hamiltonian.\blfootnote{\textbf{Acknowledgments: }We thank Ulrich Horst and Peter Bank for fruitful discussions. A. Cosso acknowledges support from GNAMPA-INdAM and from the MUR project PRIN 2022 \emph{``Entropy martingale optimal transport and McKean-Vlasov equations''}. In addition, A. Cecchin and A. Cosso acknowledge support from the MUR project PRIN 2022 PNRR \emph{``Probabilistic methods for energy transition''}.}}

\vspace{5mm}

\noindent {\bf Keywords:} stochastic Tikhonov theorem, two-scale stochastic systems, McKean-Vlasov stochastic differential equations, mean field optimal control problems.

\vspace{5mm}

\noindent {\bf Mathematics Subject Classification (2020):} 60H10, 49N80, 49M99

\section{Introduction}
We present a new version of the stochastic Tikhonov theorem in the framework of McKean-Vlasov stochastic differential equations (SDEs), formulated for two-scale forward systems (Theorem \ref{T:Convergence}) and also for two-scale forward-backward systems (Theorem \ref{T:ConvergenceII}). Besides its own mathematical interest, we are motivated by a novel application of such a result to optimal control problems as we discuss below. We first recall some classical results on Tikhonov theorem and we outline our contribution. In his seminal paper \cite{Tik52}, Tikhonov considered systems of ordinary differential equations (ODEs) on $[0,T]$, of the following form: 
\begin{equation}\label{eq:Tikhonov_ODE}
	\left\{\begin{aligned}
	dx^\varepsilon_t&=b(t,x^\varepsilon_t,a^\varepsilon_t)dt,\\
	\varepsilon da^\varepsilon_t&=B(t,x^\varepsilon_t,a^\varepsilon_t)dt,
\end{aligned}\right.
\end{equation}
with $\varepsilon\in(0,1)$ and initial conditions $x^\varepsilon_0=x_0$, $a^\varepsilon_0=a_0$. Systems of ODEs as in \eqref{eq:Tikhonov_ODE} are sometimes called \emph{two-scale systems}, with ``$x$'' being referred to as the \emph{slow} variable and ``$a$'' as the \emph{fast} variable. By keeping in mind that $\varepsilon$ is supposed to be small, one can heuristically interpret \eqref{eq:Tikhonov_ODE} as follows. For small values of $t$, we expect the slow variable to be close to its initial point $x_0$; on the other hand, by performing the change of variable $s= t/\varepsilon$ in the second equation, we expect the fast variable to be close to its limit point $a_\infty$. Continuing heuristically, sending $\varepsilon$ to zero in the second equation of \eqref{eq:Tikhonov_ODE} yields the equation $B=0$. To make this more rigorous, the idea is to consider the ODE
\begin{equation}\label{eq:Tikhonov_ODE_limit}
	\left\{\begin{aligned}
		dx_t&=b(t,x_t,\widehat{\mathrm a}(t,x_t))dt,\\
		x_0&=x_0,
	\end{aligned}\right.
\end{equation}
together with the condition
\begin{equation}\label{eq:critical_Tikhonov}
B(t,x_t,\widehat{\mathrm a}(t,x_t))=0,\qquad \forall t\in [0,T],
\end{equation} 
where the function $\widehat{\mathrm a}=\widehat{\mathrm a}(t,x)$, satisfies $\widehat{\mathrm a}(0,x_0)=a_\infty$. Under suitable assumptions, the Tikhonov theorem asserts that system \eqref{eq:Tikhonov_ODE} approximates indeed system \eqref{eq:Tikhonov_ODE_limit}--\eqref{eq:critical_Tikhonov}, in the following sense:
	\[
		\lim_{\varepsilon\to0}\sup_{0\le t\le T}|x^\varepsilon_t-x_t|=0,\qquad	\lim_{\varepsilon\to0}\sup_{S\le t\le T}|a^\varepsilon_t-\widehat{\mathrm a}(t,x_t)|=0,
	\]
for every $0<S\le T$.
Kabanov and Pergamenshchikov \cite{book_two_scales} studied a stochastic version of the above theorem by introducing the stochastic counterpart of \eqref{eq:Tikhonov_ODE}:
\begin{equation}\label{eq:Kabanov_SDE}
	\left\{\begin{aligned}
		dX^\varepsilon_t&=b(t,X^\varepsilon_t,A^\varepsilon_t)dt+\sigma(t,X^\varepsilon_t,A^\varepsilon_t)dW_t^x,\\
		\varepsilon dA^\varepsilon_t&=B(t,X^\varepsilon_t,A^\varepsilon_t)dt+\beta^\varepsilon\Sigma(t,X^\varepsilon_t,A^\varepsilon_t)dW_t^a,
	\end{aligned}\right.
\end{equation}
together with the initial conditions $X^\varepsilon_0=x_0~$, $A^\varepsilon_0=a_0$, and $\beta^\varepsilon=o(\varepsilon)$. Here $W^x$ and $W^a$ are two independent Brownian motions (see Remark \ref{R:TwoW}). The stochastic counterpart of \eqref{eq:Tikhonov_ODE_limit} reads as
\begin{equation}\label{eq:Kabanovv_SDE_limit}
	\left\{\begin{aligned}
		dX_t&=b(t,X_t,\widehat\alpha(t,X_t))dt+\sigma(t,X_t,\widehat\alpha(t,X_t))dW_t^x,\\
		X_0&=x_0,
	\end{aligned}\right.
\end{equation}
with $\widehat\alpha$ satisfying \eqref{eq:critical_Tikhonov}. They prove, under suitable conditions, that 
\[
	P-\lim_{\varepsilon\to0}\sup_{0\le t\le T}|X^\varepsilon_t-X_t|=0,\qquad
	P-\lim_{\varepsilon\to0}\sup_{S\le t\le T}|A^\varepsilon_t-\widehat\alpha(t,X_t)|=0,
\]
where the limits are understood in the sense of convergence in probability. The proof consists of two steps. First, the result is established on a short time interval, using the continuous dependence of the solution of a system of SDEs on the parameter $\varepsilon$. Second, a stability result is proven in the sense that if the solutions of \eqref{eq:Kabanov_SDE} and \eqref{eq:Kabanovv_SDE_limit} are close on a short time interval, they remain close on $[0,T]$. We refer to the monograph \cite{book_two_scales} for a thorough analysis of two-scale stochastic systems. Along the same lines, the result has been generalized to the infinite-dimensional case in \cite{BG06}, see also \cite{Schwiech21}. 
A different strand of literature, which has its origin in Bogoliubov's averaging principle, studies the case of $\beta^\varepsilon=\sqrt{\varepsilon}$. Unlike the present setting, the formal limit as $\varepsilon\to 0$ in the second equation of \eqref{eq:Kabanov_SDE} does not correspond to $B=0$, but to an ergodic SDE for the fast variable. This case requires completely different techniques and we refer to \cite{CF09,book_Random_pert} for classical references, as well as to \cite{bandini2024singular,GT21,GT22,RSX21} for more recent results.

Our first main results are Theorems \ref{T:Convergence} and \ref{T:ConvergenceII}, which correspond to our two formulations of the Tikhonov theorem. In particular, Theorem \ref{T:Convergence} applies to the McKean-Vlasov version of \eqref{eq:Kabanov_SDE} (see system \ref{TwoScale}). On the other hand, Theorem \ref{T:ConvergenceII} applies to a class of coupled forward-backward systems which arise in the study of mean field control problems, as described in Section \ref{sec:control}. In both theorems we prove a stronger convergence on the slow variable and a weaker convergence on the fast variable. The latter allows to get a possibly discontinuous process in the limit, which is relevant for our application to optimal control problems. The second novelty is that our proof is based on a completely different approach with respect to the classical literature and this allows us to easily include the McKean-Vlasov case. We stress however that this result is new even in the classical setup.

In the second part of the paper we use two-scale systems to approximate stochastic optimal control problems. In particular, as explained below, we rely on the Pontryagin stochastic maximum principle, even though it is worth mentioning that another approach could be investigated relying on Bellman's optimality principle and on the representation of the value function in terms of a suitable forward-backward system of stochastic differential equations (see for instance \cite{PardouxPeng92,Bandini18,Bandini19,BCC,BCFP18,BandiniFuhrman17}).

It is well known that the Pontryagin stochastic maximum principle reduces the problem to solving a system of forward-backward stochastic differential equations (FBSDEs) of the following form:
\begin{equation}\label{eq:general_FBSDE}
	\left\{\begin{aligned}
		dX_t&=b(t,X_t,\P_{X_t},\widehat{\alpha}(t,\Theta_t,\P_{\Theta_t}),\P_{\widehat{\alpha}_t})dt+\sigma(t,X_t,\P_{X_t},\widehat{\alpha}(t,\Theta_t,\P_{\Theta_t}),\P_{\widehat{\alpha}_t})dW_t^x,\\
		dY_t&=F(t,X_t,\P_{X_t},\widehat{\alpha}(t,\Theta_t,\P_{\Theta_t}),\P_{\widehat{\alpha}_t},Y_t, Z_t)dt+Z_tdW_t^x,\\
		X_0&=\xi,\ Y_T=G(X_T,\P_{X_T}),
	\end{aligned}\right.
\end{equation}
where $\Theta_t:=(X_t,Y_t,Z_t)$ and $\widehat{\alpha}$, $F$, $G$ are suitable functions. A crucial role is played by the function $\widehat{\alpha}$, representing the minimizer of the Hamiltonian of the system. In very simple cases such a function can be explicitly calculated and the solution of \eqref{eq:general_FBSDE} yields the solution to the control problem. However, even if it is explicit, when the dimension of the problem is large, calculating $\widehat\alpha$ at each time step may be computationally quite expensive (as a matter of fact, it may require solving a large system of linear equations, as it happens in the linear quadratic problem). Similarly, when $\widehat{\alpha}$ is not explicit, it needs to be approximated numerically and this minimization step becomes a non-trivial part of the algorithm for solving \eqref{eq:general_FBSDE} (see, e.g., the introduction of \cite{hu_MFlangevin} for a discussion).

In our last main result (Theorem \ref{thm:main_SMP}) we prove that the solution of a suitable two-scale system of FBSDEs, see \eqref{eq:two scales_FBSDE}, converges to the solution of \eqref{eq:general_FBSDE}. In particular, the ``fast'' component $A^\eps$ of \eqref{eq:two scales_FBSDE} converges towards the optimal control process given by $\widehat{\alpha}(t,\Theta_t,\P_{\Theta_t})$, while the ``slow'' component $X^\eps$ converges towards the optimal state process. The advantage is that such a procedure avoids the usual minimization step. As a matter of fact, the problem of finding the minimizer of the Hamiltonian is delegated to the equation for $A^\eps$, which runs on a fast time scale. Following the above heuristic discussion on the Tikhonov theorem, the equation for $A^\eps$ is chosen in such a way that in the limit equation we end up with the equation $\partial_a H=0$, which corresponds to finding the critical points of the Hamiltonian (see also Remark \ref{R:Form_of_the_FBSDE}). A (strict) convexity assumption, typical of the stochastic maximum principle, ensures that critical points are unique and they give indeed the unique minimizer of $H$ (and in turn the optimal control). We point out that we also consider the so-called extended mean field control setting, namely we allow $H$ to depend on the law of the control itself, therefore the equation for the critical points is more complicated, see \eqref{eq:critical}. Finally, we emphasize that the approximation procedure is new even for classical (non mean field) control problems.
 
We end the paper by illustrating our method in the case of McKean-Vlasov linear quadratic control problems, considering two different cases. Firstly, we study the one-dimensional case, where we can easily solve the FBSDE system \eqref{eq:general_FBSDE} and compare it with \eqref{eq:two scales_FBSDE} for different values of $\eps$, given by the two-scale approximation induced by the Tikhonov theorem. Secondly, we consider the (non McKean-Vlasov) linear quadratic problem in large dimension, for which we show that, even in such a simple framework, the classical approach is severely outperformed by the two-scale approximation (see Table \ref{table:dimensions}). The latter example is relevant because, as already emphasized, the results of the paper are new even in the non McKean-Vlasov setting. Those results can be explained by the fact that in the classical algorithm the minimization of the Hamiltonian requires \emph{solving}, at each time step, a large system of linear equations; on the other hand, using our method just amounts at \emph{evaluating} at each time step such a linear term, which indeed corresponds to the drift in the dynamics of the fast variable. 

The rest of the paper is organized as follows. In Section \ref{sec:Tik} we prove two different formulations of the stochastic Tikhonov theorem: Theorem \ref{T:Convergence}, where it is studied the two-scale stochastic system \eqref{TwoScale} and its convergence towards the limiting equations \eqref{widehat_alpha}-\eqref{widehat_X}; Theorem \ref{T:ConvergenceII}, where the two-scale forward-backward system \eqref{TwoScaleFB} is studied and the limiting equations are \eqref{widehat_alpha_FB}-\eqref{widehat_X_FB}. In Section \ref{sec:control} we introduce the mean field optimal control problem \eqref{eq:state}-\eqref{eq:cost}, on which we impose a suitable set of assumptions guaranteeing the necessary and sufficiency parts of Pontryagin maximum principle. Then, we consider the McKean-Vlasov forward-backward system \eqref{eq:SMP_FBSDE} arising from the Pontryagin maximum principle, which, together with \eqref{eq:critical}, turns out to be the limiting equation of the two-scale stochastic system \eqref{eq:two scales_FBSDE}. The last part of Section \ref{sec:control} is devoted to develop a numerical example in the linear quadratic case. Finally, in Appendix \ref{S:App} we recall the definition of Lions differentiability and present its essential features.

\section{Two-scale stochastic systems: stochastic Tikhonov theorems}\label{sec:Tik}

Let $(\Omega,\cF,\P)$ be a complete probability space, on which a $m$-dimensional Brownian motion $W=(W_t)_{t\geq0}$ is defined. For every $n\in\N$ and any sub-$\sigma$-algebra $\Hc$ of $\Fc$, let $L^2(\Omega,\Hc,\P;\R^n)$, or simply $L^2$ when no confusion arises, the space of (equivalence classes of) $\Hc$-measurable random variables taking values in $\R^n$. Moreover, let $\Pc(\R^n)$ be the family of all probability measures on $(\R^n,\Bc(\R^n))$, where $\Bc(\R^n)$ is the Borel $\sigma$-algebra on $\R^n$. Given a random variable $\xi\colon\Omega\rightarrow\R^n$ we denote by $\P_\xi$ its law on $(\R^n,\Bc(\R^n))$.\\
We define
\[
\Pc_2(\R^n) \ := \ \left\{\mu \in \Pc(\R^n)\colon \int_{\R^n} |x|^2\,\mu(dx)<+\infty\right\}.
\]
On the set $\Pc_2(\R^n)$ we consider the $2$-Wasserstein metric
\begin{equation*}
	\Wc_2(\mu,\mu') := \inf\bigg\{\bigg(\int_{\R^n\times\R^n}|x-y|^2\,\pi(dx,dy)\bigg)^{1/2}\colon
	\pi \in \Pc(\R^n\times\R^n) \text{ such that } \;\pi_x= \mu,\ \pi_y=\mu'\bigg\},
\end{equation*}
where $\pi_x$, $\pi_y$ are the respective marginals. Given $\xi,\xi'\in L^2(\Omega,\Fc,\P;\R^n)$ with laws $\mu$ and $\mu'$, respectively, it follows easily from the definition of $\Wc_2$ that
\begin{equation}\label{EstimateW2}
\Wc_2(\mu,\mu') \ \leq \ \|\xi-\xi'\|_{L^2},
\end{equation}
where $\|\cdot\|_{L^2}$ denotes the $L^2$-norm. We also denote
\[
\|\mu\|_2 \ := \ \sqrt{\Wc_2(\mu,\delta_0)} \ = \ \bigg(\int_\R|x|^2\,\mu(dx)\bigg)^{1/2}, \qquad \forall\,\mu\in\Pc_2(\R^n),
\]
where $\delta_0$ is the Dirac delta centered at $0$. Notice that if $\xi\colon\Omega\rightarrow\R^n$ is a random variable having distribution $\mu$, then
\begin{equation}\label{NormWass}
\|\mu\|_2 \ = \ \|\xi\|_{L^2}.
\end{equation}
In the sequel we also consider the set $\Pc_2(\R^{n\times k})$ and the corresponding Wasserstein metric $\Wc_2$, which are defined in an analogous manner.\\ 
Let $\F^W=(\cF_t^W)_{t\geq0}$ denote the $\P$-completion of the filtration generated by the Brownian motion $W$. Suppose also that there exists a sub-$\sigma$-algebra $\Gc$ of $\Fc$ satisfying:
\begin{enumerate}[1)]
\item $\Gc$ and $\Fc_\infty^W$ are independent;
\item there exists a $\Gc$-measurable random variable $U\colon\Omega\rightarrow\R$ having uniform distribution on $[0,1]$.
\end{enumerate}

\begin{remark}\label{R:Uniform}
We refer to \cite[Lemma 2.1, Remarks 2.2 and 2.3]{CKGPR} for some comments and insights on the $\sigma$-algebra $\Gc$. Here we just notice that when we deal with McKean-Vlasov stochastic differential equations it is natural to consider \emph{random} initial conditions. Such random initial conditions will be taken $\Gc$-measurable. The second property satisfied by $\Gc$ is equivalent to the following property, see \cite[Lemma 2.1]{CKGPR}:
\[
\textup{$\Gc$ satisfies 2)} \ \ \ \Longleftrightarrow \ \ \ \forall\,n\in\N,\;\text{$\Pc_2(\R^n)=\big\{\P_\xi\colon\xi\in L^2(\Omega,\Gc,\P;\R^n)\big\}$}.
\]
\end{remark}

\noindent Let $\F=(\cF_t)_{t\geq0}$ denote the the filtration given by
\begin{equation}\label{Filtr}
\Fc_t \ = \ \Gc\vee\Fc_t^W, \qquad \forall\,t\geq0.
\end{equation}

\noindent Given $n,\tilde n\in\N$, we introduce the following spaces of $\R^n$-valued or $\R^{n\times\tilde n}$-valued stochastic processes on $[0,T]$.
\begin{itemize}
\item $\S_n^2$ is the set of continuous and $\F$-adapted $\R^n$-valued stochastic processes $Y=(Y_t)_{t\in[0,T]}$ such that
\[
\|Y\|_{\S^2}^2 \ := \ \E\Big[\sup_{0\leq t\leq T}|Y_t|^2\Big] \ < \ \infty.
\]
\item $\H_n^2$ (resp. $\H_{n\times\tilde n}^2$) is the set of $\F$-progressively measurable $\R^n$-valued (resp. $\R^{n\times\tilde n}$-valued) stochastic processes $Z=(Z_t)_{t\in[0,T]}$ such that
\[
\|Z\|_{\H^2}^2 \ := \ \E\bigg[\int_0^T|Z_t|^2dt\bigg] \ < \ \infty.
\]
\end{itemize}

\subsection{Stochastic Tikhonov theorem I}
\label{subsec:TikI}

Given $T>0$, $d,k\in\N$, $\xi\in L^2(\Omega,\Gc,\P;\R^d)$, $\eta\in L^2(\Omega,\Gc,\P;\R^k)$, we consider, for every $\varepsilon>0$, the following two-scale system of stochastic differential equations on $[0,T]$:
\begin{equation}\label{TwoScale}
	\left\{\begin{aligned}dX^\varepsilon_t&=b\big(t,X^\varepsilon_t,\P_{X_t^\varepsilon},A_t^\varepsilon,\P_{A_t^\varepsilon}\big)dt+\sigma\big(t,X^\varepsilon_t,\P_{X_t^\varepsilon},A_t^\varepsilon,\P_{A_t^\varepsilon}\big) dW_t,\\
		\varepsilon dA^\varepsilon_t&=B\big(t,X^\varepsilon_t,\P_{X_t^\varepsilon},A_t^\varepsilon,\P_{A_t^\varepsilon}\big)dt+\beta^\varepsilon \Sigma\big(t,X^\varepsilon_t,\P_{X_t^\varepsilon},A_t^\varepsilon,\P_{A_t^\varepsilon}\big)dW_t,\\
		X^\varepsilon_0&=\xi,\ A^\varepsilon_0=\eta,
	\end{aligned}\right.
\end{equation}
where we impose the following assumptions on $\beta^\eps$ and on the coefficients
\[
b\,,\,\sigma\,,\,B\,,\,\Sigma \ \colon[0,T]\times\Omega\times\R^d\times\Pc_2(\R^d)\times\R^k\times\Pc_2(\R^k) \ \ \ \longrightarrow \ \ \ \R^d\,,\,\R^{d\times m}\,,\,\R^k\,,\,\R^{k\times m}.
\]
\begin{Assumption}{\bf(A)}\label{ass:two_scale}\quad
\begin{enumerate}[\upshape1)]
\item $\beta^\eps=o(\eps)$ as $\eps\rightarrow0$, that is $\lim_{\varepsilon\rightarrow0}\frac{\beta^\varepsilon}{\varepsilon}=0$.
\item The functions $b$, $\sigma$, $B$, $\Sigma$ are measurable with respect to $Prog\otimes\Bc(\R^d)\otimes\Bc(\Pc_2(\R^d))\otimes\Bc(\R^k)\otimes\Bc(\Pc_2(\R^k))$, where $Prog$ denotes the $\sigma$-algebra of $\F$-progressive sets on $[0,T]\times\Omega$, while $\Bc(S)$ denotes the Borel $\sigma$-algebra of a topological space $S$.
\item $b$, $\sigma$, $B$, $\Sigma$ satisfy linear growth conditions: there exists a constant $K>0$ such that
\begin{align*}
|b(t,\omega,x,\mu,a,\nu)| + |\sigma(t,\omega,x,\mu,a,\nu)| \ &\leq \ K\big(1 + |x| + \|\mu\|_2 + |a| + \|\nu\|_2\big), \\
|B(t,\omega,x,\mu,a,\nu)| + |\Sigma(t,\omega,x,\mu,a,\nu)| \ &\leq \ K\big(1 + |x| + \|\mu\|_2 + |a| + \|\nu\|_2\big),
\end{align*}
for all $(t,\omega,x,\mu,a,\nu)\in[0,T]\times\Omega\times\R^d\times\Pc_2(\R^d)\times\R^k\times\Pc(\R^k)$.
\item $b$, $\sigma$, $B$, $\Sigma$ satisfy the Lipschitz continuity condition: there exists a constant $K>0$ such that
\begin{align*}
|b(t,\omega,x,\mu,a,\nu) - b(t,\omega,x',\mu',a',\nu')| &\leq \ K\big(|x-x'|+\Wc_2(\mu,\mu')+|a-a'|+\Wc_2(\nu,\nu')\big), \\
|\sigma(t,\omega,x,\mu,a,\nu) - \sigma(t,\omega,x',\mu',a',\nu')| &\leq \ K\big(|x-x'|+\Wc_2(\mu,\mu')+|a-a'|+\Wc_2(\nu,\nu')\big), \\
|B(t,\omega,x,\mu,a,\nu) - B(t,\omega,x',\mu',a,\nu)| &\leq \ K\big(|x-x'|+\Wc_2(\mu,\mu')\big), \\
|\Sigma(t,\omega,x,\mu,a,\nu) - \Sigma(t,\omega,x',\mu',a',\nu')| &\leq \ K\big(|x-x'|+\Wc_2(\mu,\mu')+|a-a'|+\Wc_2(\nu,\nu')\big),
\end{align*}
for all $(t,\omega)\in[0,T]\times\Omega$, $(x,\mu,a,\nu),(x',\mu',a',\nu')\in\R^d\times\Pc_2(\R^d)\times\R^k\times\Pc_2(\R^k)$.
\item $B$ satisfies the monotonicity condition: there exists a constant $\lambda>0$ such that
\begin{equation}\label{Monotonicity}
\E\Big[\big\langle B\big(t,\xi,\P_\xi,\eta,\P_\eta\big) - B\big(t,\xi,\P_\xi,\eta',\P_{\eta'}\big),\eta - \eta'\big\rangle\Big] \ \leq \ - \lambda\E\big[|\eta - \eta'|^2\big],
\end{equation}
for all $t\in[0,T]$, $\xi\in L^2(\Omega,\Fc,\P;\R^d)$, $\eta,\eta'\in L^2(\Omega,\Fc,\P;\R^k)$.
\end{enumerate}
\end{Assumption}

\begin{remark}
As in \eqref{Monotonicity}, $B(t,\omega,x,\mu,a,\nu)$ is often written omitting $\omega$, that is $B(t,x,\mu,a,\nu)$. Similarly for the other coefficients $b$, $\sigma$, $\Sigma$.
\end{remark}

\begin{remark}\label{R:TwoW}
Notice that in \cite{book_two_scales} the Brownian motions driving the equations of the two-scale system are different and independent. Here we relax this assumption taking in \eqref{TwoScale} the same $m$-dimensional Brownian motion $W$. More precisely, the case studied in \cite{book_two_scales} is obtained writing $W=(W^x,W^a)$ (with $W^x$ being $m_x$-dimensional and $W^a$ being $m_a$-dimensional, where $m=m_x+m_a$) and choosing the coefficients $\sigma$, $\Sigma$ in such a way that the first equation in \eqref{TwoScale} only involves $W^x$, while the second equation only involves $W^a$.
\end{remark}

\begin{lemma}\label{L:Estimate}
Let Assumption \ref{ass:two_scale} hold. Let $\xi\in L^2(\Omega,\Gc,\P;\R^d)$, $\alpha,\alpha'\in\H_k^2$. Suppose that $X^\alpha\in\S_d^2$ is a solution to the following controlled stochastic differential equation on $[0,T]$:
\begin{equation}\label{SDE_X_alpha_Estimate}
dX_t \ = \ b\big(t,X_t,\P_{X_t},\alpha_t,\P_{\alpha_t}\big)dt+\sigma\big(t,X_t,\P_{X_t},\alpha_t,\P_{\alpha_t}\big)dW_t, \qquad \forall\,t\in[0,T], \qquad\quad X_0 \ = \ \xi.
\end{equation}
Similarly, suppose that $X^{\alpha'}\in\S_d^2$ is a solution to equation \eqref{SDE_X_alpha_Estimate} with $\alpha'$ in place of $\alpha$. Then, for any $\underline t,\bar t\in[0,T]$, with $\underline t\leq\bar t$, it holds that
\begin{align}\label{EstimateX-X'}
\E\Big[\sup_{\underline t\leq t\leq\bar t}\big|X_t^\alpha - X_t^{\alpha'}\big|^2\Big] \ &\leq \ \textup{e}^{C_K(1+T^2)}\bigg\{\E\big[\big|X_{\underline t}^\alpha - X_{\underline t}^{\alpha'}\big|^2\big] \\
&\quad \ + \E\bigg[\int_{\underline t}^{\bar t}\big|b\big(s,X_s^{\alpha'},\P_{X_s^{\alpha'}},\alpha_s,\P_{\alpha_s}\big) - b\big(s,X_s^{\alpha'},\P_{X_s^{\alpha'}},\alpha_s',\P_{\alpha_s'}\big)\big|^2 ds\bigg] \notag \\
&\quad \ + \E\bigg[\int_{\underline t}^{\bar t}\big|\sigma\big(s,X_s^{\alpha'},\P_{X_s^{\alpha'}},\alpha_s,\P_{\alpha_s}\big) - \sigma\big(s,X_s^{\alpha'},\P_{X_s^{\alpha'}},\alpha_s',\P_{\alpha_s'}\big)\big|^2 ds\bigg]\bigg\}, \notag
\end{align}
for some constant $C_K\geq0$, depending only on the constant $K$ appearing in Assumption \ref{ass:two_scale}. In particular, it holds that
\begin{equation}\label{EstimateX-X'2}
\E\Big[\sup_{0\leq t\leq\bar t}\big|X_t^\alpha - X_t^{\alpha'}\big|^2\Big] \ \leq \ 8K^2 \textup{e}^{C_K(1+T^2)}\E\bigg[\int_0^{\bar t} \big|\alpha_s - \alpha_s'\big|^2 ds\bigg].
\end{equation}
\end{lemma}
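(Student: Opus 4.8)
The plan is to prove \eqref{EstimateX-X'} by the classical a priori stability estimate for SDEs: write the difference $X^\alpha-X^{\alpha'}$ in integral form, estimate the Lebesgue integral by Cauchy--Schwarz and the stochastic integral by the Burkholder--Davis--Gundy inequality, use the Lipschitz bounds of Assumption \ref{ass:two_scale}4) together with \eqref{EstimateW2} to absorb the measure arguments, and close with Gronwall's lemma, keeping exactly the two ``frozen-state'' coefficient increments of \eqref{EstimateX-X'} as source terms rather than estimating them further. No existence or uniqueness statement is needed, since $X^\alpha$ and $X^{\alpha'}$ are given solutions in $\S_d^2$.

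First I would fix $\underline t\le r\le\bar t$, set $\phi(r):=\E\big[\sup_{\underline t\le t\le r}|X_t^\alpha-X_t^{\alpha'}|^2\big]$ (finite because $X^\alpha,X^{\alpha'}\in\S_d^2$), and write, for $t\in[\underline t,r]$,
\[
X_t^\alpha-X_t^{\alpha'} \ = \ \big(X_{\underline t}^\alpha-X_{\underline t}^{\alpha'}\big)+\int_{\underline t}^t\Delta b_s\,ds+\int_{\underline t}^t\Delta\sigma_s\,dW_s,
\]
where $\Delta b_s$, $\Delta\sigma_s$ denote the differences of the drift and diffusion coefficients evaluated along the two solutions. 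In each of $\Delta b_s$, $\Delta\sigma_s$ I would add and subtract the coefficient evaluated at $\big(s,X_s^{\alpha'},\P_{X_s^{\alpha'}},\alpha_s,\P_{\alpha_s}\big)$: by Assumption \ref{ass:two_scale}4) the resulting ``state part'' is bounded by $K\big(|X_s^\alpha-X_s^{\alpha'}|+\Wc_2(\P_{X_s^\alpha},\P_{X_s^{\alpha'}})\big)$, whose square has expectation at most $4K^2\,\E[|X_s^\alpha-X_s^{\alpha'}|^2]\le 4K^2\phi(s)$ once \eqref{EstimateW2} is used, while the ``control part'' is precisely the increment appearing in \eqref{EstimateX-X'}. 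Taking $\sup_{\underline t\le t\le r}$ and then $\E$, using $(a+b+c)^2\le 3(a^2+b^2+c^2)$, Cauchy--Schwarz in time for the $ds$-integral (this is where the factor $T$ enters) and Burkholder--Davis--Gundy for the $dW$-integral, one arrives at
\[
\phi(r) \ \le \ C_K(1+T)\int_{\underline t}^r\phi(s)\,ds+C_K(1+T)\Big(\E\big[|X_{\underline t}^\alpha-X_{\underline t}^{\alpha'}|^2\big]+\mathcal E_b+\mathcal E_\sigma\Big),
\]
where $\mathcal E_b$, $\mathcal E_\sigma$ are the two frozen-state integral terms on the right of \eqref{EstimateX-X'} and $C_K$ depends only on $K$ (and universal constants). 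Gronwall's lemma on $[\underline t,\bar t]$ then yields a bound of the form $C_K(1+T)\,\textup{e}^{C_K(1+T)T}\big(\E[|X_{\underline t}^\alpha-X_{\underline t}^{\alpha'}|^2]+\mathcal E_b+\mathcal E_\sigma\big)$, and since $T\le 1+T^2$ the full prefactor is absorbed into $\textup{e}^{C_K(1+T^2)}$ after enlarging $C_K$ (still depending only on $K$); this is \eqref{EstimateX-X'}.

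For \eqref{EstimateX-X'2}, which corresponds to taking the lower endpoint equal to $0$ so that the initial term vanishes (as $X_0^\alpha=X_0^{\alpha'}=\xi$), I would bound the two source terms directly: by Assumption \ref{ass:two_scale}4) the integrand of $\mathcal E_b$ is at most $K\big(|\alpha_s-\alpha_s'|+\Wc_2(\P_{\alpha_s},\P_{\alpha_s'})\big)$, whose square has expectation at most $4K^2\,\E[|\alpha_s-\alpha_s'|^2]$ by \eqref{EstimateW2}, and likewise for $\mathcal E_\sigma$; hence $\mathcal E_b+\mathcal E_\sigma\le 8K^2\,\E\big[\int_0^{\bar t}|\alpha_s-\alpha_s'|^2\,ds\big]$, and plugging this into \eqref{EstimateX-X'} gives \eqref{EstimateX-X'2}.

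This is a routine estimate with no substantial obstacle. The only points that require some care are: checking that $X^\alpha,X^{\alpha'}\in\S_d^2$ really makes the stochastic integrals square-integrable martingales (so that Burkholder--Davis--Gundy applies and $\phi$ is finite), and the constant bookkeeping ensuring that the combination of the Cauchy--Schwarz factor $T$, the universal Burkholder--Davis--Gundy constant and the Gronwall exponential depends only on $K$ and fits the announced form $\textup{e}^{C_K(1+T^2)}$.
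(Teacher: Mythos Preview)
Your argument coincides with the paper's: same integral decomposition, same use of $(a+b+c)^2\le3(a^2+b^2+c^2)$, Cauchy--Schwarz (Jensen) on the drift integral, Burkholder--Davis--Gundy on the stochastic one, Lipschitz plus \eqref{EstimateW2} to absorb the measure arguments, then Gronwall and constant bookkeeping to reach the prefactor $\textup{e}^{C_K(1+T^2)}$.

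One point worth flagging: you read \eqref{EstimateX-X'2} as the special case $\underline t=0$, while the lemma states it for arbitrary $\underline t\le\bar t$. The paper's one-line derivation (``follows directly from \eqref{EstimateX-X'}, the Lipschitz property of $b,\sigma$ \ldots'') also tacitly needs the initial term $\E[|X_{\underline t}^\alpha-X_{\underline t}^{\alpha'}|^2]$ to vanish, and in fact \eqref{EstimateX-X'2} cannot hold for general $\underline t>0$: with $b=a$, $\sigma=0$, $\alpha=1_{[0,\underline t]}$, $\alpha'\equiv0$ one has $\sup_{\underline t\le t\le\bar t}|X_t^\alpha-X_t^{\alpha'}|^2=\underline t^{\,2}>0$ while $\int_{\underline t}^{\bar t}|\alpha_s-\alpha_s'|^2\,ds=0$. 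So your restriction to $\underline t=0$ matches what is actually provable and what the paper effectively proves; the later applications in the paper either set $\underline t=0$ explicitly or are used inductively in situations where the initial difference has already been shown to vanish.
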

\begin{proof}
Fix $\underline t,\bar t\in[0,T]$, with $\underline t\leq\bar t$. We begin noting that estimate \eqref{EstimateX-X'2} follows directly from \eqref{EstimateX-X'}, the Lipschitz property of $b,\sigma$ with respect to their two last arguments $(a,\nu)$, and estimate \eqref{EstimateW2}. Therefore it remains to prove \eqref{EstimateX-X'}. From equation \eqref{SDE_X_alpha} we have, for any $t\in[\underline t,\bar t]$,
\begin{align*}
\big|X_t^\alpha - X_t^{\alpha'}\big|^2 \ &\leq \ 3\big|X_{\underline t}^\alpha - X_{\underline t}^{\alpha'}\big|^2 + 3\bigg|\int_{\underline t}^t \Big( b\big(s,X_s^\alpha,\P_{X_s^\alpha},\alpha_s,\P_{\alpha_s}\big) -  b\big(s,X_s^{\alpha'},\P_{X_s^{\alpha'}},\alpha_s',\P_{\alpha_s'}\big)\Big) ds\bigg|^2 \\
&\quad \ + 3\bigg|\int_{\underline t}^t \Big(\sigma\big(s, X_s^\alpha,\P_{ X_s^\alpha},\alpha_s,\P_{\alpha_s}\big) - \sigma\big(s, X_s^{\alpha'},\P_{ X_s^{\alpha'}},\alpha_s',\P_{\alpha_s'}\big)\Big)dW_s\bigg|^2.
\end{align*}
Taking the supremum over $t\in[\underline t,r]$, with $r\in[\underline t,\bar t]$, together with the expectation, and also applying the Jensen and Burkholder-Davis-Gundy inequalities, we obtain (for some constant $C_2\geq0$)
\begin{align*}
&\E\Big[\sup_{\underline t\leq t\leq r}\big| X_t^\alpha -  X_t^{\alpha'}\big|^2\Big] \\
&\leq \ 3\E\Big[\big|X_{\underline t}^\alpha - X_{\underline t}^{\alpha'}\big|^2\Big] + 3\E\bigg[\bigg(\int_{\underline t}^r \Big|b\big(s, X_s^\alpha,\P_{ X_s^\alpha},\alpha_s,\P_{\alpha_s}\big) -  b\big(s, X_s^{\alpha'},\P_{ X_s^{\alpha'}},\alpha_s',\P_{\alpha_s'}\big)\Big| ds\bigg)^2\bigg] \\
&+ 3\E\bigg[\sup_{\underline t\leq t\leq r}\bigg|\int_{\underline t}^t \Big(\sigma\big(s, X_s^\alpha,\P_{ X_s^\alpha},\alpha_s,\P_{\alpha_s}\big) - \sigma\big(s, X_s^{\alpha'},\P_{ X_s^{\alpha'}},\alpha_s',\P_{\alpha_s'}\big)\Big)dW_s\bigg|^2\bigg] \\
&\leq \ 3\E\Big[\big|X_{\underline t}^\alpha - X_{\underline t}^{\alpha'}\big|^2\Big] + 3T\E\bigg[\int_{\underline t}^r \Big| b\big(s, X_s^\alpha,\P_{ X_s^\alpha},\alpha_s,\P_{\alpha_s}\big) -  b\big(s, X_s^{\alpha'},\P_{ X_s^{\alpha'}},\alpha_s',\P_{\alpha_s'}\big)\Big|^2 ds\bigg] \\
&+ 3C_2 \E\bigg[\int_{\underline t}^r \Big|\sigma\big(s, X_s^\alpha,\P_{ X_s^\alpha},\alpha_s,\P_{\alpha_s}\big) - \sigma\big(s, X_s^{\alpha'},\P_{ X_s^{\alpha'}},\alpha_s',\P_{\alpha_s'}\big)\Big|^2 ds\bigg].
\end{align*}
By the Lipschitz property of $b$ and $\sigma$, we find
\begin{align*}
\E\Big[\sup_{\underline t\leq t\leq r}\big|X_t^\alpha - X_t^{\alpha'}\big|^2\Big] \ &\leq \ 3\E\Big[\big|X_{\underline t}^\alpha - X_{\underline t}^{\alpha'}\big|^2\Big] + 24K^2(T + C_2)\int_{\underline t}^r \E\Big[\sup_{\underline t\leq t\leq s}\big| X_t^\alpha -  X_t^{\alpha'}\big|^2\Big] ds \\
&+ 6(T+C_2)\E\bigg[\int_{\underline t}^r \Big| b\big(s, X_s^{\alpha'},\P_{ X_s^{\alpha'}},\alpha_s,\P_{\alpha_s}\big) -  b\big(s, X_s^{\alpha'},\P_{ X_s^{\alpha'}},\alpha_s',\P_{\alpha_s'}\big)\Big|^2 ds \\
&+ \int_{\underline t}^r \Big|\sigma\big(s, X_s^{\alpha'},\P_{ X_s^{\alpha'}},\alpha_s,\P_{\alpha_s}\big) - \sigma\big(s, X_s^{\alpha'},\P_{ X_s^{\alpha'}},\alpha_s',\P_{\alpha_s'}\big)\Big|^2 ds\bigg].
\end{align*}
Applying Gronwall's inequality to the function $v(r):=\E\sup_{\underline t\leq t\leq r}|X_t^\alpha - X_t^{\alpha'}|^2$, $r\in[\underline t,\bar t]$, we obtain
\begin{align*}
&\E\Big[\sup_{\underline t\leq t\leq\bar t}\big| X_t^\alpha -  X_t^{\alpha'}\big|^2\Big] \ \leq \ 3\text{e}^{24K^2(T+C_2)T}\E\Big[\big|X_{\underline t}^\alpha - X_{\underline t}^{\alpha'}\big|^2\Big] \notag \\
&+ 6(T+C_2)\text{e}^{24K^2(T+C_2)T}\E\bigg[\int_{\underline t}^{\bar t} \Big| b\big(s, X_s^\alpha,\P_{ X_s^\alpha},\alpha_s,\P_{\alpha_s}\big) -  b\big(s, X_s^{\alpha'},\P_{ X_s^{\alpha'}},\alpha_s',\P_{\alpha_s'}\big)\Big|^2 ds \\
&+ \int_{\underline t}^{\bar t} \Big|\sigma\big(s, X_s^\alpha,\P_{ X_s^\alpha},\alpha_s,\P_{\alpha_s}\big) - \sigma\big(s, X_s^{\alpha'},\P_{ X_s^{\alpha'}},\alpha_s',\P_{\alpha_s'}\big)\Big|^2 ds\bigg]. \notag
\end{align*}
This proves \eqref{EstimateX-X'} for some constant $C_K\geq0$, depending only on $K$ and satisfying $(3+6(T+C_2))\exp(24K^2(T+C_2)T)\leq\exp(C_K(1+T^2))$.
\end{proof}

\begin{proposition}\label{P:System_eps}
Let Assumption \ref{ass:two_scale} hold. For every $\varepsilon>0$, there exists a unique solution $(X^\varepsilon,A^\varepsilon)$ in $\S_d^2\times\S_k^2$ to system \eqref{TwoScale}. In addition, there exists a constant $C>0$, depending only on $T,K,\lambda$, but independent of $\varepsilon$, such that
\begin{equation}\label{EstimateA}
\|A^\varepsilon\|_{\H^2}^2 \ \leq \ C\bigg(1+\frac{(\beta^\eps)^2}{\eps^2}\bigg)\big(1 + \E|\eta|^2 + \E|\xi|^2\big)\textup{e}^{C\left(1+\frac{(\beta^\eps)^2}{\eps^2}\right)}
\end{equation}
and
\begin{equation}\label{EstimateX}
\|X^\eps\|_{\S^2}^2 \ \leq \ C\bigg(1+\frac{(\beta^\eps)^2}{\eps^2}\bigg)\big(1 + \E|\eta|^2 + \E|\xi|^2\big)\textup{e}^{C\left(1+\frac{(\beta^\eps)^2}{\eps^2}\right)}.
\end{equation}
\end{proposition}
\begin{proof}
\emph{Existence and uniqueness of $(X^\varepsilon,A^\varepsilon)$.} Let $\Phi\colon\H_k^2\rightarrow\H_k^2$ be the map defined as follows. Given $\alpha\in\H_k^2$, let $X^\alpha\in\S_d^2$ be the solution to the following controlled stochastic differential equation on $[0,T]$:
\begin{equation}\label{SDE_X_alpha}
dX_t \ = \ b\big(t,X_t,\P_{X_t},\alpha_t,\P_{\alpha_t}\big)dt + \sigma\big(t,X_t,\P_{X_t},\alpha_t,\P_{\alpha_t}\big)dW_t, \qquad X_0 \ = \ \xi.
\end{equation}
The existence and uniqueness of $X^\alpha\in\S_d^2$ follows from the Lipschitz and linear growth conditions of $b$ and $\sigma$ (see for instance \cite[Theorem 4.21]{bookMFG}). Then, let $\Phi(\alpha)\in\H_k^2$ be the unique solution to the following stochastic differential equation on $[0,T]$:
\[
\eps dA_t \ = \ B\big(t,X_t^\alpha,\P_{X_t^\alpha},A_t,\P_{A_t}\big)dt + \beta^\eps\Sigma\big(t,X_t^\alpha,\P_{X_t^\alpha},A_t,\P_{A_t}\big)dW_t, \qquad A_0 \ = \ \eta.
\]
The existence and uniqueness of $\Phi(\alpha)\in\H_k^2$ can be proved proceeding as in \cite[Theorem 3.21]{PardouxRascanu}, and it follows from the monotonicity condition of $B$ and the Lipschitz property of $\Sigma$ with respect to $(a,\nu)$. This shows that $\Phi$ is well-defined. It remains to prove that $\Phi$ is a contraction, indeed, the fixed point of $\Phi$ is clearly the desired solution $A^\varepsilon$. By the linear growth conditions of $B$ and $\Sigma$ with respect to $(a,\nu)$ (Assumption \ref{ass:two_scale}-4)), it is easy to see that $A^\eps$ also belongs to $\S_d^2$. Moreover, taking $\alpha:=A^\varepsilon$ in \eqref{SDE_X_alpha} we see that $X^{A^\varepsilon}$ corresponds to $X^\varepsilon$.

Let us prove that $\Phi$ is a contraction. Let $\alpha,\alpha'\in\H_k^2$. Without loss of generality, we can suppose that $\alpha_0=\alpha_0'$. If this is not the case, we define $\tilde\alpha\in\H_k^2$ as
\begin{equation}\label{tilde_alpha}
\tilde\alpha_t \ := \
\begin{cases}
\alpha_0', \qquad &t=0, \\
\alpha_t, \qquad &t\in(0,T],
\end{cases}
\end{equation}
and replace $\alpha$ by $\tilde\alpha$.
From It\^o's formula applied to $|\Phi(\alpha)_t-\Phi(\alpha')_t|^2$, we get
\begin{align*}
&\big|\Phi(\alpha)_t - \Phi(\alpha')_t\big|^2 \ = \\
&= \ \frac{2}{\varepsilon}\int_0^t \big\langle B\big(s,X_s^\alpha,\P_{X_s^\alpha},\Phi(\alpha)_s,\P_{\Phi(\alpha)_s}\big) - B\big(s,X_s^{\alpha'},\P_{X_s^{\alpha'}},\Phi(\alpha')_s,\P_{\Phi(\alpha')_s}\big),\Phi(\alpha)_s - \Phi(\alpha')_s\big\rangle ds \\
&+ \frac{(\beta^\eps)^2}{\eps^2}\int_0^t \big|\Sigma\big(s,X_s^\alpha,\P_{X_s^\alpha},\Phi(\alpha)_s,\P_{\Phi(\alpha)_s}\big) - \Sigma\big(s,X_s^{\alpha'},\P_{X_s^{\alpha'}},\Phi(\alpha')_s,\P_{\Phi(\alpha')_s}\big)\big|^2ds \\
&+ \frac{2\beta^\varepsilon}{\varepsilon}\!\!\!\int_0^t \!\!\!\!\big\langle\big(\Sigma\big(s,X_s^\alpha,\P_{X_s^\alpha},\Phi(\alpha)_s,\P_{\Phi(\alpha)_s}\big) - \Sigma\big(s,X_s^{\alpha'},\P_{X_s^{\alpha'}},\Phi(\alpha')_s,\P_{\Phi(\alpha')_s}\big)\big)\trans(\Phi(\alpha)_s - \Phi(\alpha')_s),dW_s\big\rangle.
\end{align*}
Notice that the last term above is a true martingale (with zero mean) as $X^\alpha,X^{\alpha'}\in\S_d^2$, $\Phi(\alpha),\Phi(\alpha')\in\H_k^2$, and $\Sigma$ satisfies a linear growth condition (see item 2 of Assumption \ref{ass:two_scale}). As a consequence, taking the expectation, using the monotonicity condition \eqref{Monotonicity}, and also the Lipschitz property of $B$ in $(x,\mu)$ and of $\Sigma$ in $(x,\mu,a,\nu)$, we obtain
\begin{align*}
&\E\big|\Phi(\alpha)_t - \Phi(\alpha')_t\big|^2 \ \leq \ - \frac{2\lambda}{\varepsilon}\E\int_0^t \big|\Phi(\alpha)_s - \Phi(\alpha')_s\big|^2 ds \\
&+ \frac{2K}{\varepsilon}\E\int_0^t \Big(\big| X_s^\alpha - X_s^{\alpha'}\big|+\Wc_2\big(\P_{X_s^\alpha},\P_{X_s^{\alpha'}}\big)\Big)\big|\Phi(\alpha)_s - \Phi(\alpha')_s\big| ds \\
&+ \frac{4K^2(\beta^\varepsilon)^2}{\varepsilon^2}\E\int_0^t \Big(\big| X_s^\alpha - X_s^{\alpha'}\big|^2+\Wc_2\big(\P_{X_s^\alpha},\P_{X_s^{\alpha'}}\big)^2 \\
&+ \big|\Phi(\alpha)_s - \Phi(\alpha')_s\big|^2 + \Wc_2\big(\P_{\Phi(\alpha)_s},\P_{\Phi(\alpha')_s}\big)^2\Big)ds \\
&\leq \ \frac{2K}{\varepsilon}\E\int_0^t \Big(\big| X_s^\alpha - X_s^{\alpha'}\big|^2+\Wc_2\big(\P_{X_s^\alpha},\P_{X_s^{\alpha'}}\big)^2\Big)ds + \frac{K}{\varepsilon}\E\int_0^t \big|\Phi(\alpha)_s - \Phi(\alpha')_s\big|^2 ds \\
&+ \frac{4K^2(\beta^\varepsilon)^2}{\varepsilon^2}\E\int_0^t \Big(\big| X_s^\alpha - X_s^{\alpha'}\big|^2+\Wc_2\big(\P_{X_s^\alpha},\P_{X_s^{\alpha'}}\big)^2 + \big|\Phi(\alpha)_s - \Phi(\alpha')_s\big|^2 + \Wc_2\big(\P_{\Phi(\alpha)_s},\P_{\Phi(\alpha')_s}\big)^2\Big)ds, 
\end{align*}
where the last inequality follows from $xy\leq x^2/2+y^2/2$, for every $x,y\in\R$. Hence
\begin{align*}
\E\big|\Phi(\alpha)_t - \Phi(\alpha')_t\big|^2 \ &\leq \ \bigg(\frac{4K}{\eps}+\frac{8K^2(\beta^\varepsilon)^2}{\varepsilon^2}\bigg)\int_0^t \E\big| X_s^\alpha - X_s^{\alpha'}\big|^2 ds \\
&\quad \ + \bigg(\frac{K}{\eps} + \frac{8K^2(\beta^\varepsilon)^2}{\varepsilon^2}\bigg)\int_0^t \E\big|\Phi(\alpha)_s - \Phi(\alpha')_s\big|^2 ds \\
&\leq \ C_{\eps,K}\int_0^t \E\big| X_s^\alpha - X_s^{\alpha'}\big|^2 ds + C_{\eps,K}\int_0^t \E\big|\Phi(\alpha)_s - \Phi(\alpha')_s\big|^2 ds,
\end{align*}
with $C_{\eps,K}:=4K/\eps+8K^2(\beta^\eps)^2/\eps^2$. By Gronwall's inequality, we find
\[
\E\big|\Phi(\alpha)_t - \Phi(\alpha')_t\big|^2 \ \leq \ C_{\eps,K}\text{e}^{C_{\eps,K}T}\int_0^t \E\big| X_s^\alpha - X_s^{\alpha'}\big|^2 ds \ \leq \ C_{\eps,K}T\text{e}^{C_{\eps,K}T}\big\|X^\alpha-X^{\alpha'}\big\|_{\S^2}^2.
\]
Integrating with respect to $t$ between $t=0$ and $t=T$, we obtain
\[
\big\|\Phi(\alpha)-\Phi(\alpha')\big\|_{\H^2}^2 \ \leq \ C_{\eps,K}T^2\text{e}^{C_{\eps,K}T}\big\|X^\alpha-X^{\alpha'}\big\|_{\S^2}^2.
\]
By estimate \eqref{EstimateX-X'2} with $\bar t=T$, we have
\[
\big\|\Phi(\alpha) - \Phi(\alpha')\big\|_{\H^2}^2 \ \leq \ 8K^2C_{\eps,K}T^2 \textup{e}^{C_K(1+T^2)+C_{\eps,K}T}\|\alpha - \alpha'\|_{\H^2}^2.
\]
This shows that $\Phi$ is a contraction if $T$ is small enough. For a generic $T$, we proceed iteratively, proving existence and uniqueness on suitable subintervals of $[0,T]$.

\vspace{2mm}

\noindent\emph{Estimates \eqref{EstimateA} and \eqref{EstimateX}.} We begin noting that we have the following standard estimate for $X^\eps$ (in the sequel, $C>0$ denotes a constant only depending on $\lambda,K,T$, which may change from line to line)
\begin{equation*}
\E\Big[\sup_{0\leq s\leq t}|X_s^\eps|^2\Big] \ \leq \ C\,\E\bigg[|\xi|^2 + \int_0^t \big|b(s,0,\delta_0,A_s^\eps,\P_{A_s^\eps})\big|^2 ds + \int_0^t \big|\sigma(s,0,\delta_0,A_s^\eps,\P_{A_s^\eps})\big|^2 ds\bigg].
\end{equation*}
Therefore, in particular it holds that
\begin{equation}\label{EstimateProof_X^eps2}
\E\Big[\sup_{0\leq s\leq t}|X_s^\eps|^2\Big] \ \leq \ C\,\bigg(1+\E|\xi|^2 + \int_0^t \E|A_s^\eps|^2 ds\bigg).
\end{equation}
We begin applying It\^o's formula to $s\mapsto\text{e}^{\lambda s/(2\eps)}|A_t^\eps|^2$ between $s=0$ and $s=t\in[0,T]$, then we take the expectation and we obtain (here we use that the stochastic integral $t\mapsto\int_0^t\text{e}^{2\lambda s/(3\eps)}\langle A_s^\eps,\Sigma(s,X_s^\eps,\P_{X_s^\eps},A_s^\eps,\P_{A_s^\eps})dW_s\rangle$ is a true martingale; such a martingale property is a consequence of the fact that $X^\eps\in\S_d^2$, $A^\eps\in\H_k^2$, and $\Sigma$ satisfies a linear growth condition)
\begin{align*}
\E\big[|A_t^\eps|^2\big] \ &= \ \text{e}^{-\frac{\lambda}{2\eps}t}\E\big[|\eta|^2\big] + \frac{\lambda}{2\eps}\E\int_0^t \text{e}^{\frac{\lambda}{2\eps}(s-t)}|A_s^\eps|^2 ds + \frac{1}{\eps}\E\int_0^t \text{e}^{\frac{\lambda}{2\eps}(s-t)}\langle B(s,X_s^\eps,\P_{X_s^\eps},A_s^\eps,\P_{A_s^\eps}),A_s^\eps\rangle ds \\
&\quad \ + \frac{(\beta^\eps)^2}{\eps^2}\E\int_0^t \text{e}^{\frac{\lambda}{2\eps}(s-t)}\text{tr}\Big(\Sigma(s,X_s^\eps,\P_{X_s^\eps},A_s^\eps,\P_{A_s^\eps})\Sigma(s,X_s^\eps,\P_{X_s^\eps},A_s^\eps,\P_{A_s^\eps})\trans\Big) ds.
\end{align*}
By the monotonicity condition \eqref{Monotonicity}, the Lipschitz continuity of $B$ with respect to $(x,\mu)$, and the linear growth of $\Sigma$ in $(x,\mu,a,\nu)$, we get (using also \eqref{EstimateW2} and the elementary inequalities $\E|X_s^\eps|\leq\sqrt{\E|X_s^\eps|^2}$, $\E|A_s^\eps|\leq\sqrt{\E|A_s^\eps|^2}$)
\begin{align}\label{EstimateProofA^eps}
\E\big[|A_t^\eps|^2\big] \ &\leq \ \text{e}^{-\frac{\lambda}{2\eps}t}\E\big[|\eta|^2\big] + \frac{\lambda}{2\eps}\int_0^t \text{e}^{\frac{\lambda}{2\eps}(s-t)}\E|A_s^\eps|^2 ds - \frac{\lambda}{\eps}\int_0^t \text{e}^{\frac{\lambda}{2\eps}(s-t)}\E|A_s^\eps|^2 ds \notag \\
&\quad \ + \frac{C}{\eps}\int_0^t \text{e}^{\frac{\lambda}{2\eps}(s-t)} \sqrt{\E|X_s^\eps|^2} \sqrt{\E|A_s^\eps|^2} ds + \frac{1}{\eps}\E\int_0^t \text{e}^{\frac{\lambda}{2\eps}(s-t)} \langle B(s,0,\delta_0,0,\delta_0),A_s^\eps\rangle ds \notag \\
&\quad \ + C\frac{(\beta^\eps)^2}{\eps^2}\int_0^t \text{e}^{\frac{\lambda}{2\eps}(s-t)}\Big(1 + \E|X_s^\eps|^2 + \E|A_s^\eps|^2\Big) ds.
\end{align}
Now, by \eqref{EstimateProof_X^eps2}, the elementary inequality $\sqrt{a+b+c}\leq\sqrt{a}+\sqrt{b}+\sqrt{c}$, valid for every $a,b,c\geq0$, and also recalling from Assumption \ref{ass:two_scale} that $|B(s,0,\delta_0,0,\delta_0)|\leq K$, we find
\begin{align}\label{EstimateProofA^eps2}
&\frac{C}{\eps}\int_0^t \text{e}^{\frac{\lambda}{2\eps}(s-t)} \sqrt{\E|X_s^\eps|^2} \sqrt{\E|A_s^\eps|^2} ds + \frac{1}{\eps}\E\int_0^t \text{e}^{\frac{\lambda}{2\eps}(s-t)} \langle B(s,0,\delta_0,0,\delta_0),A_s^\eps\rangle ds \ \leq \notag \\
&\leq \ \frac{C}{\eps}\int_0^t \text{e}^{\frac{\lambda}{2\eps}(s-t)} \bigg(1+\sqrt{\E|\xi|^2}+\sqrt{\int_0^s \E|A_r^\eps|^2 dr}\bigg) \sqrt{\E|A_s^\eps|^2} ds \notag \\
&\leq \ \frac{C}{\eps}\sqrt{\int_0^t \text{e}^{\frac{\lambda}{2\eps}(s-t)} \bigg(1+\E|\xi|^2+\int_0^s \E|A_r^\eps|^2 dr\bigg) ds}\sqrt{\int_0^t \text{e}^{\frac{\lambda}{2\eps}(s-t)}\E|A_s^\eps|^2 ds} \notag \\
&\leq \ \frac{C^2}{2\lambda\eps}\int_0^t \text{e}^{\frac{\lambda}{2\eps}(s-t)} \bigg(1+\E|\xi|^2+\int_0^s \E|A_r^\eps|^2 dr\bigg) ds + \frac{\lambda}{2\eps}\int_0^t \text{e}^{\frac{\lambda}{2\eps}(s-t)}\E|A_s^\eps|^2 ds \notag \\
&= \ \frac{C^2}{\lambda^2}\big(1+\E|\xi|^2\big)\big(1 - \text{e}^{-\frac{\lambda}{2\eps}t}\big) + \frac{C^2}{2\lambda\eps}\int_0^t \E|A_r^\varepsilon|^2\bigg(\int_r^t \text{e}^{\frac{\lambda}{2\eps}(s-t)} ds\bigg) dr + \frac{\lambda}{2\eps}\int_0^t \text{e}^{\frac{\lambda}{2\eps}(s-t)}\E|A_s^\eps|^2 ds \notag \\
&= \ \frac{C^2}{\lambda^2}\big(1+\E|\xi|^2\big)\big(1 - \text{e}^{-\frac{\lambda}{2\eps}t}\big) + \frac{C^2}{\lambda^2}\int_0^t \E|A_r^\varepsilon|^2\big(1 - \text{e}^{\frac{\lambda}{2\eps}(r-t)}\big) dr + \frac{\lambda}{2\eps}\int_0^t \text{e}^{\frac{\lambda}{2\eps}(s-t)}\E|A_s^\eps|^2 ds \notag \\
&\leq \ \frac{C^2}{\lambda^2}\bigg(1+\E|\xi|^2+\int_0^t \E|A_s^\varepsilon|^2ds\bigg) + \frac{\lambda}{2\eps}\int_0^t \text{e}^{\frac{c\lambda}{\eps}(s-t)}\E|A_s^\eps|^2 ds,
\end{align}
where in the third inequality above we used that $ab\leq C a^2/(2\lambda)+\lambda b^2/(2C)$, for every $a,b\in\R$. Moreover, using again \eqref{EstimateProof_X^eps2}, we obtain
\begin{align}\label{EstimateProofA^eps3}
\frac{(\beta^\eps)^2}{\eps^2}\int_0^t \text{e}^{\frac{\lambda}{2\eps}(s-t)}\Big(1 + \E|X_s^\eps|^2 + \E|A_s^\eps|^2\Big) ds \ &\leq \ \frac{(\beta^\eps)^2}{\eps^2}\int_0^t \Big(1 + \E|X_s^\eps|^2 + \E|A_s^\eps|^2\Big) ds \notag \\
&\leq \ C\frac{(\beta^\eps)^2}{\eps^2}\bigg(1 + \E|\xi|^2 + \int_0^t \E|A_s^\eps|^2 ds\bigg).
\end{align}
Plugging estimates \eqref{EstimateProofA^eps2} and \eqref{EstimateProofA^eps3} into \eqref{EstimateProofA^eps} we get
\begin{align*}
\E\big[|A_t^\eps|^2\big] \ &\leq \ \text{e}^{-\frac{c\lambda}{\eps}t}\E\big[|\eta|^2\big] + C\bigg(1+\frac{(\beta^\eps)^2}{\eps^2}\bigg)\bigg(1+\E|\xi|^2+\int_0^t \E|A_s^\varepsilon|^2ds\bigg) \\
&\leq \ \E\big[|\eta|^2\big] + C\bigg(1+\frac{(\beta^\eps)^2}{\eps^2}\bigg)\bigg(1+\E|\xi|^2+\int_0^t \E|A_s^\varepsilon|^2ds\bigg).
\end{align*}
Applying Gronwall's inequality to $v(t)=\E|A_t^\varepsilon|^2$, $t\in[0,T]$, we find
\[
\E\big[|A_t^\eps|^2\big] \ \leq \ \bigg(\E\big[|\eta|^2\big] + C\bigg(1+\frac{(\beta^\eps)^2}{\eps^2}\bigg)\big(1+\E|\xi|^2\big)\bigg)\text{e}^{C\left(1+\frac{(\beta^\eps)^2}{\eps^2}\right)T}, \qquad \forall\,t\in[0,T].
\]
which yields \eqref{EstimateA}. Finally, plugging \eqref{EstimateA} into \eqref{EstimateProof_X^eps2} we get \eqref{EstimateX}.
\end{proof}

\begin{Assumption}{\bf(B)}\label{ass:optimal_control}
For every $\xi\in L^2(\Omega,\Gc,\P;\R^d)$ there exists $\widehat\alpha\in\H_k^2$ such that
\begin{equation}\label{widehat_alpha}
\big\|B\big(\cdot,\widehat X_\cdot,\P_{\widehat X_\cdot},\widehat\alpha_\cdot,\P_{\widehat\alpha_\cdot}\big)\big\|_{\H^2}^2 \ = \ \E\bigg[\int_0^T \big|B\big(t,\widehat X_t,\P_{\widehat X_t},\widehat\alpha_t,\P_{\widehat\alpha_t}\big)\big|^2\,dt\bigg] \ = \ 0,
\end{equation}
where $\widehat X$ is the unique solution in $\S_d^2$ to the following controlled stochastic differential equation on $[0,T]$:
\begin{equation}\label{widehat_X}
d\widehat X_t \ = \ b\big(t,\widehat X_t,\P_{\widehat X_t},\widehat\alpha_t,\P_{\widehat\alpha_t}\big)dt+\sigma\big(t,\widehat X_t,\P_{\widehat X_t},\widehat\alpha_t,\P_{\widehat\alpha_t}\big)dW_t, \quad t\in[0,T], \qquad\quad \widehat X_0 \ = \ \xi.
\end{equation}
\end{Assumption}

\begin{proposition}\label{Prop:Uniqueness}
Let Assumptions \ref{ass:two_scale} and \ref{ass:optimal_control} hold. Then, there exists a unique pair of processes $(\widehat X,\widehat\alpha)\in\S_d^2\times\H_k^2$ satisfying \eqref{widehat_alpha}-\eqref{widehat_X}.
\end{proposition}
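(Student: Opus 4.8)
The plan is to separate existence, which is essentially already contained in the hypotheses, from uniqueness, which carries all the content. For existence there is little to do: Assumption \ref{ass:optimal_control} already provides, for the prescribed $\xi\in L^2(\Omega,\Gc,\P;\R^d)$, a process $\widehat\alpha\in\H_k^2$ satisfying \eqref{widehat_alpha}, while the controlled SDE \eqref{widehat_X} is uniquely solvable in $\S_d^2$ by the Lipschitz and linear-growth conditions of Assumption \ref{ass:two_scale}, exactly as used in the proof of Proposition \ref{P:System_eps}. So a pair $(\widehat X,\widehat\alpha)$ exists, and the work lies in uniqueness.

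For uniqueness I would take two pairs $(\widehat X,\widehat\alpha)$ and $(\widehat X',\widehat\alpha')$ in $\S_d^2\times\H_k^2$ satisfying \eqref{widehat_alpha}-\eqref{widehat_X} and first derive a pointwise-in-time comparison of the controls. By \eqref{widehat_alpha}, for a.e.\ $t$ the random variables $B(t,\widehat X_t,\P_{\widehat X_t},\widehat\alpha_t,\P_{\widehat\alpha_t})$ and $B(t,\widehat X'_t,\P_{\widehat X'_t},\widehat\alpha'_t,\P_{\widehat\alpha'_t})$ vanish $\P$-a.s.; pairing their difference with $\widehat\alpha_t-\widehat\alpha'_t$, taking expectation, and inserting the intermediate term $B(t,\widehat X_t,\P_{\widehat X_t},\widehat\alpha'_t,\P_{\widehat\alpha'_t})$, I would split $0$ into a monotone part, which by \eqref{Monotonicity} (with $\xi=\widehat X_t$, $\eta=\widehat\alpha_t$, $\eta'=\widehat\alpha'_t$) is $\le-\lambda\,\E[|\widehat\alpha_t-\widehat\alpha'_t|^2]$, and a remainder in which the two copies of $B$ differ only in their $(x,\mu)$-slot. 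The remainder is controlled via Cauchy--Schwarz, the third Lipschitz estimate in Assumption \ref{ass:two_scale}-4) (the one involving only $(x,\mu)$), and \eqref{EstimateW2}, by $2K\,(\E[|\widehat X_t-\widehat X'_t|^2])^{1/2}(\E[|\widehat\alpha_t-\widehat\alpha'_t|^2])^{1/2}$. Rearranging gives $\E[|\widehat\alpha_t-\widehat\alpha'_t|^2]\le(4K^2/\lambda^2)\,\E[|\widehat X_t-\widehat X'_t|^2]$ for a.e.\ $t$.

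Then I would close the argument through the forward equation. Since $\widehat X$ and $\widehat X'$ both solve \eqref{SDE_X_alpha_Estimate} with common initial condition $\xi$ and controls $\widehat\alpha$, $\widehat\alpha'$, Lemma \ref{L:Estimate} in the form \eqref{EstimateX-X'2} (with $\underline t=0$ and $\bar t\in[0,T]$ arbitrary) yields $\E[|\widehat X_{\bar t}-\widehat X'_{\bar t}|^2]\le C\,\E[\int_0^{\bar t}|\widehat\alpha_s-\widehat\alpha'_s|^2\,ds]$ with $C$ depending only on $K$ and $T$. Writing $u(t):=\E[\int_0^t|\widehat\alpha_s-\widehat\alpha'_s|^2\,ds]$, which is finite, absolutely continuous and vanishes at $0$, the two estimates combine to $u'(t)\le(4CK^2/\lambda^2)\,u(t)$ for a.e.\ $t$, so Gronwall forces $u\equiv0$; hence $\widehat\alpha=\widehat\alpha'$ in $\H_k^2$ and then $\widehat X=\widehat X'$ in $\S_d^2$, either by \eqref{EstimateX-X'2} again or by pathwise uniqueness for \eqref{widehat_X}.

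The Lipschitz/Cauchy--Schwarz bookkeeping and the final Gronwall step are routine; the one point needing care is the \emph{order} in which the two a priori estimates are chained. Monotonicity of $B$ controls $\widehat\alpha_t-\widehat\alpha'_t$ only pointwise in $t$ and in terms of $\widehat X_t-\widehat X'_t$, whereas Lemma \ref{L:Estimate} controls $\widehat X$ in terms of the time-\emph{integral} of $\widehat\alpha-\widehat\alpha'$; closing the loop therefore requires running Gronwall on the integrated quantity $u$ rather than on $\E[|\widehat\alpha_t-\widehat\alpha'_t|^2]$ directly. One must also not forget to insert the intermediate $B$-term before invoking \eqref{Monotonicity}, which is stated with a common first argument — that is essentially the whole subtlety.
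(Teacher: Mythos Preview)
Your proposal is correct. Both your argument and the paper's begin identically—pairing the vanishing $B$-terms with $\widehat\alpha_t-\widehat\alpha'_t$, inserting the intermediate term, and invoking monotonicity plus the Lipschitz bound in $(x,\mu)$—but they diverge in how the loop is closed. The paper integrates this inequality over an arbitrary subinterval $[\underline t,\bar t]$, applies \eqref{EstimateX-X'2} on that subinterval, and arrives at
\[
F(\bar t)-F(\underline t)\ \le\ \hat C\sqrt{\bar t-\underline t}\,\big(F(\bar t)-F(\underline t)\big),\qquad F(t):=\E\int_0^t|\widehat\alpha_s-\widehat\alpha'_s|^2\,ds,
\]
from which it concludes that $F$ is constant on short intervals and iterates to cover $[0,T]$. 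You instead keep the monotonicity estimate \emph{pointwise} in $t$, obtain $\E|\widehat\alpha_t-\widehat\alpha'_t|^2\le(4K^2/\lambda^2)\,\E|\widehat X_t-\widehat X'_t|^2$, feed this into \eqref{EstimateX-X'2} with $\underline t=0$, and run an ordinary Gronwall on $u=F$. Your route is shorter, avoids the small-interval iteration, and only ever invokes \eqref{EstimateX-X'2} from $\underline t=0$ where the initial conditions genuinely coincide; the paper's version trades that for not having to pass to an a.e.\ differential inequality. Both approaches are valid; there is no gap in yours.
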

\begin{proof}
Let $(X',\alpha')\in\S_d^2\times\H_k^2$ be another pair satisfying \eqref{widehat_alpha}-\eqref{widehat_X}. First observe that if $\|\widehat\alpha-\alpha'\|_{\H^2}=0$, then by estimate \eqref{EstimateX-X'2} we have $\|\widehat X-X'\|_{\S^2}=0$. Therefore, it is enough to prove $\|\widehat\alpha-\alpha'\|_{\H^2}=0$. To this end, notice that, for any $\bar t\in[0,T]$, it holds that
\begin{align*}
&\bigg|\E\bigg[\int_0^{\bar t} \big\langle B\big(t,\widehat X_t,\P_{\widehat X_t},\widehat\alpha_t,\P_{\widehat\alpha_t}\big) - B\big(t,X_t',\P_{X_t'},\alpha_t',\P_{\alpha_t'}\big),\widehat\alpha_t - \alpha_t'\big\rangle dt\bigg]\bigg| \\
&\leq \ \E\bigg[\int_0^T \Big(\big|B\big(t,\widehat X_t,\P_{\widehat X_t},\widehat\alpha_t,\P_{\widehat\alpha_t}\big)\big| + \big|B\big(t,X_t',\P_{X_t'},\alpha_t',\P_{\alpha_t'}\big)\big|\widehat\alpha_t - \alpha_t'\big|dt\bigg] \\
&\leq \ \Big(\big\|B\big(\cdot,\widehat X_\cdot,\P_{\widehat X_\cdot},\widehat\alpha_\cdot,\P_{\widehat\alpha_\cdot}\big)\big\|_{\H^2} + \big\|B\big(\cdot,X_\cdot',\P_{X_\cdot'},\alpha_\cdot',\P_{\alpha_\cdot'}\big)\big\|_{\H^2}\Big)\big\|\widehat\alpha - \alpha'\big\|_{\H^2} \ = \ 0.
\end{align*}
Hence
\begin{align*}
0 \ &= \ \E\bigg[\int_0^{\bar t} \big\langle B\big(t,\widehat X_t,\P_{\widehat X_t},\widehat\alpha_t,\P_{\widehat\alpha_t}\big) - B\big(t,X_t',\P_{X_t'},\alpha_t',\P_{\alpha_t'}\big),\widehat\alpha_t - \alpha_t'\big\rangle dt\bigg] \notag \\
&= \ \E\bigg[\int_0^{\bar t} \big\langle B\big(t,\widehat X_t,\P_{\widehat X_t},\widehat\alpha_t,\P_{\widehat\alpha_t}\big) - B\big(t,\widehat X_t,\P_{\widehat X_t},\alpha_t',\P_{\alpha_t'}\big),\widehat\alpha_t - \alpha_t'\big\rangle dt\bigg] \notag \\
&\quad \ + \E\bigg[\int_0^{\bar t} \big\langle B\big(t,\widehat X_t,\P_{\widehat X_t},\alpha_t',\P_{\alpha_t'}\big) - B\big(t,X_t',\P_{X_t'},\alpha_t',\P_{\alpha_t'}\big),\widehat\alpha_t - \alpha_t'\big\rangle dt\bigg] \notag \\
&\leq \ -\lambda\E\bigg[\int_0^{\bar t} |\widehat\alpha_t - \alpha_t'|^2 dt\bigg] + K\E\bigg[\int_0^{\bar t}\Big(|\widehat X_t - X_t'| + \Wc_2\big(\P_{\widehat X_t},\P_{X_t'}\big)\Big)|\widehat\alpha_t-\alpha_t'|dt\bigg],
\end{align*}
where the last inequality follows from the monotonicity and Lipschitz properties of $B$. Now, from the Cauchy-Schwarz inequality and estimate \eqref{EstimateW2}, we find
\begin{align*}
\lambda\E\bigg[\int_0^{\bar t} |\widehat\alpha_t - \alpha_t'|^2 dt\bigg] \ &\leq \ K\sqrt{\E\bigg[\int_0^{\bar t}\Big(|\widehat X_t - X_t'| + \Wc_2\big(\P_{\widehat X_t},\P_{X_t'}\big)\Big)^2dt\bigg]}\sqrt{\E\bigg[\int_0^{\bar t}|\widehat\alpha_t-\alpha_t'|^2dt\bigg]} \\
&\leq \ \sqrt{2}K\sqrt{\E\bigg[\int_0^{\bar t}|\widehat X_t - X_t'|^2 dt\bigg]}\sqrt{\E\bigg[\int_0^{\bar t}|\widehat\alpha_t-\alpha_t'|^2dt\bigg]} \\
&\leq \ \sqrt{2\bar t}K\sqrt{\E\Big[\sup_{0\leq t\leq\bar t}|\widehat X_t - X_t'|^2\Big]}\sqrt{\E\bigg[\int_0^{\bar t}|\widehat\alpha_t-\alpha_t'|^2dt\bigg]}.
\end{align*}
Now, by estimate \eqref{EstimateX-X'2} we obtain
\begin{equation}\label{alpha-alpha'}
\lambda\E\bigg[\int_0^{\bar t} |\widehat\alpha_t - \alpha_t'|^2 dt\bigg] \ \leq \ 4\sqrt{\bar t}\,K^2\textup{e}^{\frac{C_K(1+T^2)}{2}}\E\bigg[\int_0^{\bar t} \big|\widehat \alpha_t - \alpha_t'\big|^2 dt\bigg].
\end{equation}
Let $F\colon[0,T]\rightarrow[0,\infty)$ be given by
\[
F(t) \ = \ \E\bigg[\int_0^t \big|\widehat \alpha_s - \alpha_s'\big|^2 dt\bigg], \qquad \forall\,t\in[0,T].
\]
Notice that $F(0)=0$ and $F$ is a monotone non-decreasing function. Since $F(T)=\|\widehat\alpha-\alpha'\|_{\H^2}$, we see that the claim follows if we prove that $F$ is constant. This is indeed a direct consequence of \eqref{alpha-alpha'}, which written in terms of $F$ becomes
\begin{equation}\label{alpha-alpha'_F}
F(t) \ \leq \ \hat C\sqrt{t}\,F(t), \qquad \text{for all }t\in[0,T],
\end{equation}
with $\hat C:=(4K^2/\lambda)\exp(C_K(1+T^2)/2)>0$. Since $F\geq0$, from \eqref{alpha-alpha'_F} we deduce that
\[
\hat C\sqrt{t}\,<\,1 \ \ \ \Longrightarrow \ \ \ F(t)\,=\,0.
\]
This means that $F$ is constant on the interval $[0,\ell)$, where $\ell:=\hat C^{-2}$. Therefore, starting for instance from the interval $[0,\ell/2]$, then considering the interval $[\ell/2,\ell]$, afterwards the interval $[\ell,3\ell/2]$, and so on, we see that after a finite number of steps we conclude that $F$ is constant on the entire interval $[0,T]$.
\end{proof}

\begin{lemma}\label{L:Density}
The family
\[
\cH \ = \ \bigg\{\alpha\in\H_k^2\colon\alpha_t=\int_0^tF_s\,ds,\;\forall\,t\in[0,T],\text{ for some }F\in\H_k^2\bigg\}
\]
is dense in $\H_k^2$.

\end{lemma}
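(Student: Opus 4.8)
The plan is to prove the density directly, by a backward time-mollification of a generic $\alpha\in\H_k^2$. For $n\in\N$ I would set
\[
\alpha^n_t \ := \ n\int_{(t-1/n)^+}^t \alpha_s\,ds, \qquad t\in[0,T], \qquad\qquad F^n_s \ := \ n\alpha_s - n\,\alpha_{s-1/n}\mathbf 1_{\{s\ge 1/n\}}, \qquad s\in[0,T],
\]
and claim that $\alpha^n\in\cH$ and $\alpha^n\to\alpha$ in $\H_k^2$, which is enough. \emph{For the membership $\alpha^n\in\cH$:} a direct computation — splitting according to whether $t<1/n$ or $t\ge 1/n$ and using the substitution $u=s-1/n$ in the shifted integral — shows $\int_0^t F^n_s\,ds=\alpha^n_t$ for all $t\in[0,T]$. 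The process $F^n$ is $\F$-progressively measurable, since $\alpha$ is and the truncated time-shift $s\mapsto\alpha_{s-1/n}\mathbf 1_{\{s\ge 1/n\}}$ is jointly measurable and adapted ($\alpha_{s-1/n}$ being $\Fc_{s-1/n}\subseteq\Fc_s$-measurable), hence progressive. Moreover $F^n\in\H_k^2$, because $\int_{1/n}^T\E|\alpha_{s-1/n}|^2\,ds=\int_0^{T-1/n}\E|\alpha_u|^2\,du\le\|\alpha\|_{\H^2}^2$ gives $\|F^n\|_{\H^2}^2\le 4n^2\|\alpha\|_{\H^2}^2<\infty$. (The indicator $\mathbf 1_{\{s\ge 1/n\}}$, equivalently extending $\alpha$ by $0$ to negative times rather than by $\alpha_0$, is exactly what makes this estimate legitimate, since $\alpha\in\H_k^2$ does not control the single value $\alpha_0$.) Thus $\alpha^n=\int_0^\cdot F^n_s\,ds\in\cH$; the same kind of bound $\big\|\int_0^\cdot F_s\,ds\big\|_{\H^2}^2\le \tfrac{T^2}{2}\|F\|_{\H^2}^2$ also confirms that $\cH\subseteq\H_k^2$ in the first place.

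\emph{For the convergence $\alpha^n\to\alpha$ in $\H_k^2$:} fix $\omega$ in the full-probability set on which $\alpha_\cdot(\omega)\in L^2([0,T];\R^k)$, and extend $\alpha_\cdot(\omega)$ by $0$ outside $[0,T]$ to obtain $\tilde f\in L^2(\R;\R^k)$. Because the average defining $\alpha^n$ looks backward in time, for $t\in[0,T]$ one has $\alpha^n_t(\omega)=(\tilde f*\phi_n)(t)$ with $\phi_n:=n\mathbf 1_{[0,1/n]}$ an approximate identity on $L^2(\R)$, so $\int_0^T|\alpha^n_t(\omega)-\alpha_t(\omega)|^2\,dt\le\|\tilde f*\phi_n-\tilde f\|_{L^2(\R)}^2\to 0$, while Young's inequality gives the $\omega$-wise bound $\int_0^T|\alpha^n_t(\omega)-\alpha_t(\omega)|^2\,dt\le 4\int_0^T|\alpha_t(\omega)|^2\,dt$, which is $\P$-integrable. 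Dominated convergence then yields $\E\int_0^T|\alpha^n_t-\alpha_t|^2\,dt\to 0$.

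The bookkeeping in the identity $\int_0^t F^n_s\,ds=\alpha^n_t$ and the approximate-identity estimate are routine; the only points that need a little care are the progressive measurability of the ``derivative'' $F^n$ (i.e. that a truncated time-shift of a progressive process is progressive) and the $t=0$ subtlety just noted, which dictates extending $\alpha$ by zero rather than by $\alpha_0$ — neither is serious. (An alternative but heavier route uses the Hilbert-space structure: if $Z\in\H_k^2$ is orthogonal to $\cH$, then by Fubini $\E\int_0^T\big\langle\int_s^T Z_t\,dt,\,F_s\big\rangle\,ds=0$ for every $F\in\H_k^2$, so the optional projection of $s\mapsto\int_s^T Z_t\,dt$ vanishes; hence $s\mapsto\int_0^s Z_t\,dt$ is a continuous martingale of finite variation, thus constant, forcing $Z=0$. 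I would prefer the explicit mollification argument above, as it is elementary and self-contained.)
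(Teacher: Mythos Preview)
Your argument is correct and complete: the backward mollification $\alpha^n_t=n\int_{(t-1/n)^+}^t\alpha_s\,ds$ lies in $\cH$ via the explicit derivative $F^n$ you wrote down, and the approximate-identity plus dominated-convergence step gives $\alpha^n\to\alpha$ in $\H_k^2$. The measurability and the $t<1/n$ boundary bookkeeping are handled carefully.

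The paper's proof takes a different route. It first invokes the classical density in $\H_k^2$ of (the linear span of) step processes $\xi\,1_{(t_0,T]}$ with $\xi$ bounded and $\Fc_{t_0}$-measurable, and then approximates each such step by the ``ramp'' $\int_0^\cdot n\xi\,1_{[t_0,t_0+1/n]}(s)\,ds$, computing the error explicitly as $\tfrac{1}{3n}\E|\xi|^2$. So the paper factors through an auxiliary dense class, whereas you mollify an arbitrary $\alpha$ directly. Your approach is arguably more self-contained (no appeal to the density of simple processes) and yields an explicit approximating sequence for any $\alpha$; the paper's approach is shorter once the Karatzas--Shreve lemma is granted, and avoids the small subtleties about progressive measurability of the time-shifted process. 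Either is perfectly adequate here.
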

\begin{proof}
Consider the family
\[
\cS \ = \ \bigg\{\alpha\in\H_k^2\colon\alpha_t=\xi1_{(t_0,T]}(t),\;\forall\,t\in[0,T],\text{ for some }t_0\in[0,T),\,\xi\colon\Omega\rightarrow\R^k,\;\cF_{t_0}\text{-meas. and bounded}\bigg\}.
\]
It is well-known that linear space generated by $\cS$ is dense in $\H_k^2$, see for instance \cite[Lemma 3.2.4]{KaratzasShreve} (notice that it coincides with the linear space generated by the processes of the form $\xi1_{(t_0,t_1]}$, with $t_0,t_1\in[0,T]$, $t_0<t_1$, and $\xi$ being $\cF_{t_0}$-measurable and bounded). Then, it is enough to prove that, for any $t_0\in[0,T)$ and $\xi\colon\Omega\rightarrow\R^k$, with $\xi$ being $\cF_{t_0}$-measurable and bounded, there exists a sequence $(F^n)_n\subset\H_k^2$ such that
\[
\E\bigg[\int_0^T \bigg|\int_0^t F_s^n\,ds - \xi1_{(t_0,T]}(t)\bigg|^2 dt\bigg] \ \underset{n\rightarrow\infty}{\longrightarrow} \ 0.
\]
Let $F_s^n=n\xi1_{[t_0,t_0+1/n]}(s)$, $s\in[0,T]$, $n\in\N$. Then
\[
\E\bigg[\int_0^T \bigg|\int_0^t F_s^n\,ds - \xi1_{(t_0,T]}(t)\bigg|^2 dt\bigg] \ = \ \E\bigg[\int_{t_0}^{t_0+\frac{1}{n}} \big|n\xi(t-t_0) - \xi\big|^2 dt\bigg] \ = \ \frac{1}{3n} \E\big[|\xi|^2\big] \ \underset{n\rightarrow\infty}{\longrightarrow} \ 0.
\]
\end{proof}

\noindent We can now state our first formulation of the stochastic Tikhonov theorem, which applies to the two-scale system \eqref{TwoScale}.

\begin{theorem}[Stochastic Tikhonov Theorem I]\label{T:Convergence}
Let Assumptions \ref{ass:two_scale} and \ref{ass:optimal_control} hold. Moreover, let $(\widehat X,\widehat\alpha)\in\S_d^2\times\H_k^2$ be as in Assumption \ref{ass:optimal_control} and, for every $\varepsilon>0$, let $(X^\varepsilon,A^\varepsilon)\in\S_d^2\times\H_k^2$ be as in Proposition \ref{P:System_eps}. Then, it holds that
\begin{equation}\label{Convergence_A}
\big\|A^\varepsilon - \widehat\alpha\big\|_{\H^2} \ \underset{\varepsilon\rightarrow0}{\longrightarrow} \ 0
\end{equation}
and
\begin{equation}\label{Convergence_X}
\big\|X^\varepsilon - \widehat X\big\|_{\S^2} \ \underset{\varepsilon\rightarrow0}{\longrightarrow} \ 0.
\end{equation}
\end{theorem}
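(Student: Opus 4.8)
The plan is to first reduce the convergence \eqref{Convergence_X} of the slow variable to the convergence \eqref{Convergence_A} of the fast one, and then to prove \eqref{Convergence_A} by a Gronwall argument in which the monotonicity of $B$ is balanced against an It\^o identity tested on a smooth process. Write $\phi^\varepsilon(t):=\E\int_0^t|A^\varepsilon_s-\widehat\alpha_s|^2\,ds$. Since $\widehat X$ solves \eqref{widehat_X} with control $\widehat\alpha$ and $X^\varepsilon$ solves the same controlled stochastic differential equation with control $A^\varepsilon$, both with initial value $\xi$, estimate \eqref{EstimateX-X'2} applied on $[0,t]$ (i.e.\ with $\underline t=0$, $\bar t=t$) gives, for every $t\in[0,T]$,
\[
\E\big[|X^\varepsilon_t-\widehat X_t|^2\big]\ \le\ \E\Big[\sup_{0\le s\le t}|X^\varepsilon_s-\widehat X_s|^2\Big]\ \le\ \bar C\,\phi^\varepsilon(t),\qquad \bar C:=8K^2\mathrm e^{C_K(1+T^2)}.
\]
In particular $\|X^\varepsilon-\widehat X\|_{\S^2}^2\le\bar C\,\phi^\varepsilon(T)=\bar C\,\|A^\varepsilon-\widehat\alpha\|_{\H^2}^2$, so it is enough to prove $\phi^\varepsilon(T)\to0$. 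I also record that, by Proposition \ref{P:System_eps} and the linear growth in Assumption \ref{ass:two_scale}-3), there is a constant $M_0$ independent of $\varepsilon$ with $\|A^\varepsilon\|_{\H_k^2}^2+\E\int_0^T|B^\varepsilon_s|^2ds+\E\int_0^T|\Sigma^\varepsilon_s|^2ds\le M_0$, where $B^\varepsilon_s:=B(s,X^\varepsilon_s,\P_{X^\varepsilon_s},A^\varepsilon_s,\P_{A^\varepsilon_s})$ and likewise $\Sigma^\varepsilon_s$.

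\emph{Upper bound.} Fix $t\in[0,T]$. By \eqref{widehat_alpha} the process $B(\cdot,\widehat X_\cdot,\P_{\widehat X_\cdot},\widehat\alpha_\cdot,\P_{\widehat\alpha_\cdot})$ vanishes in $\H_k^2$, hence $\E\int_0^t\langle B^\varepsilon_s,A^\varepsilon_s-\widehat\alpha_s\rangle ds$ is unchanged if $B^\varepsilon_s$ is replaced by $B^\varepsilon_s-B(s,\widehat X_s,\P_{\widehat X_s},\widehat\alpha_s,\P_{\widehat\alpha_s})$. Splitting this difference into its increment in $(x,\mu)$ and its increment in $(a,\nu)$: the monotonicity condition \eqref{Monotonicity}, applied pointwise in $s$ (for a.e.\ $s$ the arguments $\widehat X_s,A^\varepsilon_s,\widehat\alpha_s$ lie in $L^2(\Omega,\Fc,\P;\R^d)$, resp.\ $L^2(\Omega,\Fc,\P;\R^k)$) and then integrated, produces $-\lambda\phi^\varepsilon(t)$; the remaining increment is controlled by the Lipschitz continuity of $B$ in $(x,\mu)$, \eqref{EstimateW2}, Cauchy--Schwarz and a Young inequality. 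The key point is that $\E\int_0^t|X^\varepsilon_s-\widehat X_s|^2ds$ should be bounded by $\bar C\int_0^t\phi^\varepsilon(s)\,ds$, using the pointwise inequality of the first paragraph together with Fubini, rather than by the coarser $\bar C\,t\,\phi^\varepsilon(t)$; after the Young inequality this yields
\[
\E\!\int_0^t\!\langle B^\varepsilon_s,A^\varepsilon_s-\widehat\alpha_s\rangle ds\ \le\ -\frac{\lambda}{2}\,\phi^\varepsilon(t)+\frac{2K^2\bar C}{\lambda}\int_0^t\phi^\varepsilon(s)\,ds,
\]
and it is this ``running integral'' form, rather than a bare $\phi^\varepsilon(t)$, that lets the Gronwall loop close for arbitrary $T$ without a smallness assumption.

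\emph{Lower bound.} Fix $\alpha\in\cH$, say $\alpha_s=\int_0^sF_u\,du$ with $F\in\H_k^2$; such processes are dense in $\H_k^2$ by Lemma \ref{L:Density}. Then $A^\varepsilon-\alpha$ is a continuous semimartingale, $d(A^\varepsilon_s-\alpha_s)=(\tfrac1\varepsilon B^\varepsilon_s-F_s)ds+\tfrac{\beta^\varepsilon}{\varepsilon}\Sigma^\varepsilon_s\,dW_s$; It\^o's formula applied to $|A^\varepsilon_s-\alpha_s|^2$ on $[0,t]$ — with a standard localization so that the $dW$-integral is a true martingale, then passing to the limit via the $\S^2$/$\H^2$ bounds — multiplied by $\varepsilon$ and taken in expectation gives
\[
2\,\E\!\int_0^t\!\langle A^\varepsilon_s-\alpha_s,B^\varepsilon_s\rangle ds=\varepsilon\,\E|A^\varepsilon_t-\alpha_t|^2-\varepsilon\,\E|\eta|^2+2\varepsilon\,\E\!\int_0^t\!\langle A^\varepsilon_s-\alpha_s,F_s\rangle ds-\frac{(\beta^\varepsilon)^2}{\varepsilon}\,\E\!\int_0^t\!|\Sigma^\varepsilon_s|^2 ds.
\]
Dropping the nonnegative term $\varepsilon\,\E|A^\varepsilon_t-\alpha_t|^2$, using $M_0$ and $\beta^\varepsilon=o(\varepsilon)$, and writing $A^\varepsilon_s-\alpha_s=(A^\varepsilon_s-\widehat\alpha_s)+(\widehat\alpha_s-\alpha_s)$ with the last term estimated by $\|\widehat\alpha-\alpha\|_{\H^2}\sqrt{M_0}$, one gets a nonnegative quantity $\rho_\varepsilon(\alpha)$, with $\rho_\varepsilon(\alpha)\to0$ as $\varepsilon\to0$ ($\alpha$ fixed), such that $\E\int_0^t\langle B^\varepsilon_s,A^\varepsilon_s-\widehat\alpha_s\rangle ds\ge-\rho_\varepsilon(\alpha)-\|\widehat\alpha-\alpha\|_{\H^2}\sqrt{M_0}$.

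Combining the upper and lower bounds, $\tfrac{\lambda}{2}\phi^\varepsilon(t)\le\rho_\varepsilon(\alpha)+\|\widehat\alpha-\alpha\|_{\H^2}\sqrt{M_0}+\tfrac{2K^2\bar C}{\lambda}\int_0^t\phi^\varepsilon(s)ds$ for all $t\in[0,T]$, so Gronwall's inequality gives $\phi^\varepsilon(T)\le\tfrac2\lambda\big(\rho_\varepsilon(\alpha)+\|\widehat\alpha-\alpha\|_{\H^2}\sqrt{M_0}\big)\mathrm e^{4K^2\bar C T/\lambda^2}$. Letting $\varepsilon\to0$ with $\alpha$ fixed removes $\rho_\varepsilon(\alpha)$, and then choosing $\alpha=\alpha^n\in\cH$ with $\|\widehat\alpha-\alpha^n\|_{\H^2}\to0$ (Lemma \ref{L:Density}) makes the right-hand side vanish, so $\limsup_{\varepsilon\to0}\phi^\varepsilon(T)=0$, which is \eqref{Convergence_A}; then \eqref{Convergence_X} follows from the first paragraph. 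I expect the main obstacle — which both the density lemma and the running-integral bound are designed to dispose of — to be the two-way coupling: the monotonicity estimate for the fast variable is contaminated by the slow-variable error $X^\varepsilon-\widehat X$, and $\widehat\alpha$ is in general only an $\H_k^2$-process and not a semimartingale, so It\^o's formula cannot be applied to $|A^\varepsilon-\widehat\alpha|^2$ directly. The remaining ingredients — the localization of the stochastic integral, Fubini's theorem, and the uniform a priori bounds — are routine.
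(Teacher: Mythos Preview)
Your argument is correct, and it is organized differently---and more economically---than the paper's proof. Both proofs share the same skeleton: reduce \eqref{Convergence_X} to \eqref{Convergence_A} via \eqref{EstimateX-X'2}; approximate $\widehat\alpha$ by absolutely continuous $\alpha\in\cH$ from Lemma~\ref{L:Density} so that It\^o's formula can be applied to $|A^\varepsilon-\alpha|^2$; exploit the monotonicity \eqref{Monotonicity} and the Lipschitz property of $B$ in $(x,\mu)$; and close with Gronwall. The difference is in how the It\^o identity is used. The paper applies It\^o to the \emph{weighted} quantity $\mathrm e^{\frac{2\lambda}{3\varepsilon}t}|A_t^\varepsilon-\alpha_t^n|^2$ and derives a pointwise differential inequality for $\E|A_t^\varepsilon-\alpha_t^n|^2$; the exponential weight is chosen so that the $-\lambda/\varepsilon$ coming from monotonicity cancels against the weight's drift, after which a rather intricate sequence of estimates ($I_1,I_2,I_3$ with nested time integrals) and a Gronwall in $t$ lead to a bound that is finally integrated over $[0,T]$. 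You instead work directly with the running integral $\phi^\varepsilon(t)=\E\int_0^t|A^\varepsilon_s-\widehat\alpha_s|^2ds$ and split the single cross-term $\E\int_0^t\langle B^\varepsilon_s,A^\varepsilon_s-\widehat\alpha_s\rangle ds$ into an upper bound (from monotonicity plus the control-to-state estimate, yielding the crucial $\int_0^t\phi^\varepsilon(s)ds$ rather than a bare $\phi^\varepsilon(t)$) and a lower bound (from the It\^o identity multiplied by $\varepsilon$, which kills all the residual terms in the limit). This avoids the exponential weight and the nested integrals entirely and gives a shorter Gronwall loop. What the paper's route buys is an intermediate pointwise-in-$t$ estimate on $\E|A_t^\varepsilon-\alpha_t^n|^2$ before integration; what your route buys is simplicity and a more transparent separation of the two mechanisms (monotonicity versus smallness of $\varepsilon$). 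Both reach the same conclusion with the same hypotheses.
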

\begin{proof}
We begin by noticing that \eqref{Convergence_X} follows directly from \eqref{Convergence_A} and estimate \eqref{EstimateX-X'2} (with $\bar t=T$). Thus, we only need to prove \eqref{Convergence_A}. Let $\widehat\alpha\in\H_k^2$ be the process appearing in Assumption \ref{ass:optimal_control}. By Lemma \ref{L:Density} there exists $(F^n)_n\in\H_k^2$ such that $\|\alpha^n-\widehat\alpha\|_{\H^2}\rightarrow0$ as $n\rightarrow\infty$, with $\alpha_t^n=\int_0^t F_s^n ds$, $t\in[0,T]$ (as in \eqref{tilde_alpha}, we redefine $\alpha^n$ at $t=0$ as: $\alpha_0^n=\eta$). Let also $X^n\in\S_d^2$ denote the solution to the following equation:
\[
dX_t^n \ = \ b\big(t,X_t^n,\P_{X_t^n},\alpha_t^n,\P_{\alpha_t^n}\big)dt+\sigma\big(t,X_t^n,\P_{X_t^n},\alpha_t^n,\P_{\alpha_t^n}\big)dW_t, \qquad t\in[0,T],
\]
with $X_0^n=\xi$. Applying It\^o's formula to $\text{e}^{\frac{2\lambda}{3\varepsilon} t}|A_t^\varepsilon - \alpha_t^n|^2$, and taking the expectation (here we use that the stochastic integral $t\mapsto\int_0^t\text{e}^{2\lambda s/(3\eps)}\langle A_s^\eps-\alpha_s^n,\Sigma(s,X_s^\eps,\P_{X_s^\eps},A_s^\eps,\P_{A_s^\eps})dW_s\rangle$ is a true martingale; such a martingale property is a consequence of the fact that $X^\eps\in\S_d^2$, $A^\eps,\alpha^n\in\H_k^2$, and $\Sigma$ satisfies a linear growth condition), we find
\begin{align*}
\E|A_t^\varepsilon - \alpha_t^n|^2 &= \frac{2\lambda}{3\varepsilon}\int_0^t \text{e}^{\frac{2\lambda}{3\varepsilon}(s-t)} \E|A_s^\varepsilon - \alpha_s^n|^2 ds + \frac{2}{\varepsilon}\int_0^t \text{e}^{\frac{2\lambda}{3\varepsilon}(s-t)} \E\big\langle B\big(s,X_s^\varepsilon,\P_{X_s^\eps},A_s^\varepsilon,\P_{A_s^\eps}\big),A_s^\varepsilon - \alpha_s^n\big\rangle ds \\
&\quad \ - 2\int_0^t \text{e}^{\frac{2\lambda}{3\varepsilon}(s-t)} \E\big\langle F_s^n,A_s^\varepsilon - \alpha_s^n\big\rangle ds + \frac{(\beta^\varepsilon)^2}{\varepsilon^2} \int_0^t \text{e}^{\frac{2\lambda}{3\varepsilon}(s-t)} \E\big|\Sigma\big(s,X_s^\varepsilon,\P_{X_s^\eps},A_s^\varepsilon,\P_{A_s^\eps}\big)\big|^2 ds,
\end{align*}
where we can interchange the expectation and the integral with respect to time thanks to Fubini's theorem, as a matter of fact the integrands belong to $L^1([0,t]\times\Omega,\Bc([0,t])\otimes\Fc,ds\otimes d\P)$, for every $t\in[0,T]$. Proceeding along the same lines as for \eqref{EstimateProofA^eps}, we find (in the sequel, $C>0$ denotes a constant only depending on $\lambda,K,T$, which may change from line to line)
\begin{equation*}
v(t) \ \leq \ C\int_0^t v(s)\,ds + u(t),
\end{equation*}
where $v(t)=\E|A_t^\varepsilon-\alpha_t^n|^2$, $t\in[0,T]$, and
\begin{align*}
u(t) \ &= \ \frac{C}{\varepsilon} \int_0^t \text{e}^{\frac{2\lambda}{3\varepsilon}(s-t)} \E\big|B(s,X_s^n,\P_{X_s^n},\alpha_s^n,\P_{\alpha_s^n})\big|^2 ds + 4\int_0^t\text{e}^{\frac{2\lambda}{3\varepsilon}(s-t)}\E\big|F_s^n\big|^2ds \\
&\quad \ + \frac{(\beta^\varepsilon)^2}{\varepsilon^2} \int_0^t \text{e}^{\frac{2\lambda}{3\varepsilon}(s-t)} \E\big|\Sigma(s,X_s^\varepsilon,\P_{X_s^\eps},A_s^\varepsilon,\P_{A_s^\eps})\big|^2 ds.
\end{align*}
Applying Gronwall's inequality, we obtain $v(t)\leq u(t)+\int_0^t u(r)C\text{e}^{C(t-r)}dr$. Possibly enlarging $C$, from the latter inequality we find $v(t)\leq u(t)+C\int_0^t u(r)dr$, that is
\begin{align}\label{ProofLimit4}
&\E|A_t^\varepsilon - \alpha_t^n|^2 \ \leq \ \frac{C}{\varepsilon} \int_0^t \text{e}^{\frac{2\lambda}{3\varepsilon}(s-t)} \E\big|B(s,X_s^n,\P_{X_s^n},\alpha_s^n,\P_{\alpha_s^n})\big|^2 ds + 4\int_0^t\text{e}^{\frac{2\lambda}{3\varepsilon}(s-t)}\E\big|F_s^n\big|^2ds \notag \\
&+ \frac{(\beta^\varepsilon)^2}{\varepsilon^2} \int_0^t \text{e}^{\frac{2\lambda}{3\varepsilon}(s-t)} \E\big|\Sigma(s,X_s^\varepsilon,\P_{X_s^\eps},A_s^\varepsilon,\P_{A_s^\eps})\big|^2 ds \notag \\
&+ C\bigg\{\frac{C}{\varepsilon}\int_0^t \bigg(\int_0^r \text{e}^{\frac{2\lambda}{3\varepsilon}(s-r)} \E\big|B(s,X_s^n,\P_{X_s^n},\alpha_s^n,\P_{\alpha_s^n})\big|^2 ds\bigg)dr + 4\int_0^t\bigg(\int_0^r\text{e}^{\frac{2\lambda}{3\varepsilon}(s-r)}\E\big|F_s^n\big|^2ds\bigg)dr \notag \\
&+ \frac{(\beta^\varepsilon)^2}{\varepsilon^2} \int_0^t\bigg(\int_0^r \text{e}^{\frac{2\lambda}{3\varepsilon}(s-r)} \E\big|\Sigma(s,X_s^\varepsilon,\P_{X_s^\eps},A_s^\varepsilon,\P_{A_s^\eps})\big|^2 ds\bigg)dr\bigg\}.
\end{align}
Notice that
\begin{align*}
\int_0^t\bigg(\int_0^r\text{e}^{\frac{2\lambda}{3\varepsilon}(s-r)}\E\big|F_s^n\big|^2ds\bigg)dr \ &= \ \int_0^t\E\big|F_s^n\big|^2\bigg(\int_s^t\text{e}^{\frac{2\lambda}{3\varepsilon}(s-r)}dr\bigg)ds \\
&= \ \frac{3\eps}{2\lambda}\int_0^t\E\big|F_s^n\big|^2\Big(1-\textup{e}^{\frac{2\lambda}{3\varepsilon}(s-t)}\Big)ds \ \leq \ C\eps\int_0^t\E\big|F_s^n\big|^2ds.
\end{align*}
An analogous estimate holds for the terms concerning $B$ and $\Sigma$. Therefore, by \eqref{ProofLimit4} we obtain
\begin{align*}
&\E|A_t^\varepsilon - \alpha_t^n|^2 \ \leq \ \frac{C}{\eps}\int_0^t \Big(\text{e}^{\frac{2\lambda}{3\varepsilon}(s-t)}+\eps\Big) \E\big|B(s,X_s^n,\P_{X_s^n},\alpha_s^n,\P_{\alpha_s^n})\big|^2 ds \\
&+ C\int_0^t\Big(\text{e}^{\frac{2\lambda}{3\varepsilon}(s-t)}+\eps\Big)\E\big|F_s^n\big|^2ds + C\frac{(\beta^\varepsilon)^2}{\varepsilon^2} \int_0^t \Big(\text{e}^{\frac{2\lambda}{3\varepsilon}(s-t)}+\eps\Big) \E\big|\Sigma(s,X_s^\varepsilon,\P_{X_s^\eps},A_s^\varepsilon,\P_{A_s^\eps})\big|^2 ds.
\end{align*}
Integrating with respect to $t$ between $t=0$ and $t=T$, we find
\begin{align}\label{ProofLimit2}
&\big\|A^\varepsilon - \alpha^n\big\|_{\H^2}^2 \ \leq \ \frac{C}{\varepsilon}\!\!\int_0^T\!\!\!\!\int_0^t\!\! \Big(\text{e}^{\frac{2\lambda}{3\varepsilon}(s-t)}+\eps\Big) \E\big|B(s,X_s^n,\P_{X_s^n},\alpha_s^n,\P_{\alpha_s^n})\big|^2 dsdt \\
&+ C\!\!\int_0^T\!\!\!\!\int_0^t\!\!\Big(\text{e}^{\frac{2\lambda}{3\varepsilon}(s-t)}+\eps\Big)\E\big|F_s^n\big|^2dsdt + C\frac{(\beta^\varepsilon)^2}{\varepsilon^2}\!\! \int_0^T\!\!\!\!\int_0^t\!\! \Big(\text{e}^{\frac{2\lambda}{3\varepsilon}(s-t)}+\eps\Big) \E\big|\Sigma(s,X_s^\varepsilon,\P_{X_s^\eps},A_s^\varepsilon,\P_{A_s^\eps})\big|^2 dsdt. \notag
\end{align}
Letting $\varepsilon\rightarrow0$, recalling estimate \eqref{EstimateX}, the growth condition of $\Sigma$, equality \eqref{NormWass}, and the fact that $\beta^\varepsilon=o(\varepsilon)$ as $\eps\rightarrow0$, we see that
\begin{equation}\label{Convergence_Sigma}
\frac{(\beta^\varepsilon)^2}{\varepsilon^2} \int_0^T\int_0^t \Big(\text{e}^{\frac{2\lambda}{3\varepsilon}(s-t)}+\eps\Big) \E\big|\Sigma(s,X_s^\varepsilon,\P_{X_s^\varepsilon},A_s^\varepsilon,\P_{A_s^\eps})\big|^2 dsdt \ \underset{\varepsilon\rightarrow0}{\longrightarrow} \ 0.
\end{equation}
Similarly, since $F^n\in\H_k^2$ and $F^n$ is independent of $\eps$, by Lebesgue's dominated convergence theorem we deduce that
\begin{equation}\label{Convergence_Fn}
\int_0^T\int_0^t\Big(\text{e}^{\frac{2\lambda}{3\varepsilon}(s-t)}+\eps\Big)\E\big|F_s^n\big|^2dsdt \ \underset{\varepsilon\rightarrow0}{\longrightarrow} \ 0.
\end{equation}
Finally, we have
\begin{align*}
&\frac{1}{\varepsilon} \int_0^T\int_0^t \Big(\text{e}^{\frac{2\lambda}{3\varepsilon}(s-t)}+\eps\Big) \E\big|B(s,X_s^n,\P_{X_s^n},\alpha_s^n,\P_{\alpha_s^n})\big|^2 dsdt \\
&= \ \frac{1}{\varepsilon} \int_0^T\E\big|B(s,X_s^n,\P_{X_s^n},\alpha_s^n,\P_{\alpha_s^n})\big|^2\bigg(\int_s^T \Big(\text{e}^{\frac{2\lambda}{3\varepsilon}(s-t)}+\eps\Big)dt\bigg)ds \\
&= \ \frac{1}{\varepsilon} \int_0^T\E\big|B(s,X_s^n,\P_{X_s^n},\alpha_s^n,\P_{\alpha_s^n})\big|^2\bigg(-\frac{3\eps}{2\lambda}\text{e}^{\frac{2\lambda}{3\varepsilon}(s-t)}\Big|_{t=s}^{t=T}+\eps(T-s)\bigg)ds \\
&= \ \frac{1}{\varepsilon} \int_0^T\E\big|B(s,X_s^n,\P_{X_s^n},\alpha_s^n,\P_{\alpha_s^n})\big|^2\bigg(\frac{3\eps}{2\lambda} - \frac{3\eps}{2\lambda}\text{e}^{\frac{2\lambda}{3\varepsilon}(s-T)}+\eps(T-s)\bigg)ds \\
&\leq \ \frac{1}{\varepsilon} \int_0^T\E\big|B(s,X_s^n,\P_{X_s^n},\alpha_s^n,\P_{\alpha_s^n})\big|^2\bigg(\frac{3\eps}{2\lambda}+\eps T\bigg)ds \\
&= \ \bigg(\frac{3}{2\lambda}+T\bigg)\int_0^T\E\big|B(s,X_s^n,\P_{X_s^n},\alpha_s^n,\P_{\alpha_s^n})\big|^2ds \ = \ \bigg(\frac{3}{2\lambda}+T\bigg)\big\|B\big(\cdot,X_\cdot^n,\P_{X_\cdot^n},\alpha_\cdot^n,\P_{\alpha_\cdot^n}\big)\big\|_{\H^2}^2.
\end{align*}
Hence, by the above limit and also by \eqref{Convergence_Sigma} and \eqref{Convergence_Fn}, sending $\varepsilon\rightarrow0$ in \eqref{ProofLimit2} we obtain
\[
\limsup_{\varepsilon\rightarrow0} \big\|A^\varepsilon - \alpha^n\big\|_{\H^2}^2 \ \leq \ C\big\|B\big(\cdot,X_\cdot^n,\P_{X_\cdot^n},\alpha_\cdot^n,\P_{\alpha_\cdot^n}\big)\big\|_{\H^2}^2.
\]
Now, notice that
\[
\big\|A^\varepsilon - \widehat\alpha\big\|_{\H^2} \ \leq \ \big\|A^\varepsilon - \alpha^n\big\|_{\H^2} + \big\|\alpha^n - \widehat\alpha\big\|_{\H^2}.
\]
Hence, sending $\varepsilon\rightarrow0$ we find
\begin{equation}\label{ProofLimit3}
\limsup_{\varepsilon\rightarrow0}\big\|A^\varepsilon - \widehat\alpha\big\|_{\H^2} \ \leq \ C\big\|B\big(\cdot,X_\cdot^n,\P_{X_\cdot^n},\alpha_\cdot^n,\P_{\alpha_\cdot^n}\big)\big\|_{\H^2} + \big\|\alpha^n - \widehat\alpha\big\|_{\H^2}, \qquad \forall\,n\in\N.
\end{equation}
Recalling that $\alpha^n\rightarrow\widehat\alpha$ in $\H_k^2$ as $n\rightarrow\infty$, we deduce from estimate \eqref{EstimateX-X'2} (with $\bar t=T$) that $X^n\rightarrow\widehat X$ in $\S_d^2$. This in turn implies that there exists a subsequence $\{(X^{n_m},\alpha^{n_m})\}_m$ which converges to $(\widehat X,\widehat\alpha)$ pointwise. As a consequence, from the continuity of $B$, we deduce that $\{B(\cdot,X_\cdot^{n_m},\P_{X_\cdot^{n_m}},\alpha_\cdot^{n_m},\P_{\alpha_\cdot^{n_m}})\}_m$ also converges pointwise $\P\otimes dt$-a.e. to $B(\cdot,\widehat X_\cdot,\P_{\widehat X_\cdot},\widehat\alpha_\cdot,\P_{\widehat\alpha_\cdot})$, which is equal to zero. Then, from estimate \eqref{EstimateX}, the growth condition of $B$, and Lebesgue's dominated convergence theorem, it follows that $\|B(\cdot,X_\cdot^{n_m},\P_{X_\cdot^{n_m}},\alpha_\cdot^{n_m},\P_{\alpha_\cdot^{n_m}})\|_{\H^2}$ $\rightarrow0$. In conclusion, from \eqref{ProofLimit3}, and in particular from the arbitrariness of $n\in\N$, we obtain
\[
\limsup_{\varepsilon\rightarrow0}\big\|A^\varepsilon - \widehat\alpha\big\|_{\H^2} \ = \ 0,
\]
which proves \eqref{Convergence_A}.
\end{proof}

\subsection{Stochastic Tikhonov theorem II}
\label{subsec:TikI}

\noindent We present in this section a formulation of the stochastic Tikhonov theorem which applies to a class of coupled forward-backward systems which arise in the study of mean field control problems, as described in Section \ref{sec:control}. 
More precisely, given $\xi\in L^2(\Omega,\Gc,\P;\R^d)$, $\eta\in L^2(\Omega,\Gc,\P;\R^k)$, we consider, for every $\varepsilon>0$, the following coupled two-scale forward-backward system of stochastic differential equations on $[0,T]$:
\begin{equation}\label{TwoScaleFB}
	\left\{\begin{aligned}dX^\varepsilon_t&=b\big(t,X^\varepsilon_t,\P_{X_t^\varepsilon},A_t^\varepsilon,\P_{A_t^\varepsilon}\big)dt+\sigma\big(t,X^\varepsilon_t,\P_{X_t^\varepsilon},A_t^\varepsilon,\P_{A_t^\varepsilon}\big) dW_t,\\
	dY_t^\eps&=-\Phi(t,X_t^\eps,Y_t^\eps,Z_t^\eps,A_t^\eps,\P_{(X_t^\eps,Y_t^\eps,Z_t^\eps,A_t^\eps)})dt + Z_t^\eps dW_t, \\
	d\bar Y_t^\eps&=-\Psi(t,X_t^\eps,Y_t^\eps,Z_t^\eps,A_t^\eps,\P_{(X_t^\eps,Y_t^\eps,Z_t^\eps,A_t^\eps)})dt + \bar Z_t^\eps dW_t, \\
		\varepsilon dA^\varepsilon_t&=-\bar Y_t^\eps dt+\beta^\varepsilon \Gamma dW_t,\\
		X^\varepsilon_0&=\xi,\ Y_T^\eps=\phi(X_T^\eps,\P_{X_T^\eps}),\ \bar Y_T^\eps=0,\ A^\varepsilon_0=\eta.
	\end{aligned}\right.
\end{equation}
On the coefficients $b,\sigma$ we impose Assumption \ref{ass:two_scale}, moreover $\beta^\varepsilon=o(\varepsilon)$ as $\eps\rightarrow0$ and $\Gamma$ is a matrix of size $k\times m$. On the other hand, the coefficients
\begin{align*}
\Phi\,,\,\Psi \ \colon[0,T]\times\R^d\times\R^d\times\R^{d\times m}\times\R^k\times\Pc_2(\R^d\times\R^d\times\R^{d\times m}\times\R^k) \ \ \ &\longrightarrow \ \ \ \R^d\,,\,\R^k, \\
\phi \ \colon\R^d\times\Pc_2(\R^d) \ \ \ &\longrightarrow \ \ \ \R^d
\end{align*}
we impose the following assumptions.

\begin{Assumption}{\bf(C)}\label{ass:two_scaleFB}\quad
\begin{enumerate}[\upshape1)]
\item $\Phi$, $\Psi$, $\phi$ are Borel measurable functions.
\item $\Phi$, $\Psi$, $\phi$ satisfy linear growth conditions: there exists a constant $K>0$ such that
\[
|\Phi(t,x,y,z,a,\pi)| + |\Psi(t,x,y,z,a,\pi)| + |\phi(x,\mu)| \leq K\big(1 + |x| + \|\mu\|_2 + |y| + |z| + |a| + \|\pi\|_2\big),
\]
for all $(t,x,\mu,y,z,a,\pi)\in[0,T]\times\R^d\times\Pc_2(\R^d)\times\R^d\times\R^{d\times m}\times\R^k\times\Pc(\R^d\times\R^d\times\R^{d\times m}\times\R^k)$.
\item $\Phi$, $\Psi$, $\phi$ satisfy the Lipschitz continuity condition: there exists a constant $K>0$ such that
\begin{align*}
&|\Phi(t,x,y,z,a,\pi) - \Phi(t,x',y',z',a',\pi')| + |\Psi(t,x,y,z,a,\pi) - \Psi(t,x',y',z',a',\pi')| \\
&+ \, |\phi(x,\mu) - \phi(x',\mu')| \leq K\big(|x-x'|+\Wc_2(\mu,\mu')+|y-y'|+|z-z'|+|a-a'|+\Wc_2(\pi,\pi')\big),
\end{align*}
for all $t\in[0,T]$, $(x,\mu,y,z,a,\pi),(x',\mu',y',z',a',\pi')\in\R^d\times\Pc_2(\R^d)\times\R^d\times\R^{d\times m}\times\R^k\times\Pc_2(\R^d\times\R^d\times\R^{d\times m}\times\R^k)$.
\item The following monotonicity conditions hold: there exists a constant $\lambda>0$ such that
\begin{align*}
&\E\Big[\big\langle b\big(t,\xi,\P_\xi,\eta,\P_\eta\big) - b\big(t,\xi',\P_{\xi'},\eta',\P_{\eta'}\big),\upsilon - \upsilon'\big\rangle\Big] \notag \\
&+ \E\Big[\textup{tr}\Big(\big(\sigma\big(t,\xi,\P_\xi,\eta,\P_\eta\big) - \sigma\big(t,\xi',\P_{\xi'},\eta',\P_{\eta'}\big)\big)\big(\zeta - \zeta'\big)\trans\Big)\Big] \notag \\
&- \E\Big[\big\langle\Phi\big(t,\xi,\upsilon,\zeta,\eta,\P_{(\xi,\upsilon,\zeta,\eta)}\big) - \Phi\big(t,\xi',\upsilon',\zeta',\eta',\P_{(\xi',\upsilon',\zeta',\eta')}\big),\xi - \xi'\big\rangle\Big] \notag \\
&- \E\Big[\big\langle\Psi\big(t,\xi,\upsilon,\zeta,\eta,\P_{(\xi,\upsilon,\zeta,\eta)}\big) - \Psi\big(t,\xi',\upsilon',\zeta',\eta',\P_{(\xi',\upsilon',\zeta',\eta')}\big),\eta - \eta'\big\rangle\Big] \leq - \lambda\E\big[|\eta - \eta'|^2\big]
\end{align*}
and
\[
\E\big[\big\langle \phi\big(\xi,\P_\xi\big) - \phi\big(\xi',\P_{\xi'}\big),\xi - \xi'\big\rangle\big] \ \geq \ 0,
\]
for all $t\in[0,T]$, $\xi,\xi',\upsilon,\upsilon'\in L^2(\Omega,\Fc,\P;\R^d)$, $\eta,\eta'\in L^2(\Omega,\Fc,\P;\R^k)$, $\zeta,\zeta'\in L^2(\Omega,\Fc,\P;\R^{d\times m})$.
\end{enumerate}
\end{Assumption}

\begin{proposition}\label{P:System_eps_FB}
Let Assumption \ref{ass:two_scaleFB} hold. For every $\varepsilon>0$, there exists a unique solution $(X^\varepsilon,Y^\eps,Z^\eps,\bar Y^\eps,\bar Z^\eps,A^\varepsilon)$ in $\S_d^2\times\S_d^2\times\H_{d\times m}^2\times\S_k^2\times\H_{k\times m}^2\times\S_k^2$ to system \eqref{TwoScaleFB}. Moreover, there exists a constant $C$, independent of $\eps$, such that
\begin{align}\label{Estimate_FB}
&\|X^\eps\|_{\S^2}^2 + \|A^\eps\|_{\H^2}^2 + \|Y^\eps\|_{\S^2}^2 + \|Z^\eps\|_{\H^2}^2 + \|\bar Y^\eps\|_{\S^2}^2 + \|\bar Z^\eps\|_{\H^2}^2 \notag \\
&\leq \ C\,\bigg(\E|\xi|^2 + \E|\eta|^2 + |\phi(0,\delta_0)|^2 + \int_0^T\big|b(t,0,\delta_0,0,\delta_0)\big|^2 dt + \int_0^T\big|\sigma(t,0,\delta_0,0,\delta_0)\big|^2 dt \\
&\quad \ + \int_0^T\big|\Phi(t,0,0,0,0,\delta_0)\big|^2 dt + \int_0^T\big|\Psi(t,0,0,0,0,\delta_0)\big|^2 dt + \frac{(\beta^\eps)^2}{\eps^2}\textup{tr}\big(\Gamma\Gamma\trans\big)\bigg) \notag
\end{align}
\end{proposition}
\begin{proof}
The existence and uniqueness result follows from \cite[Corollary 2.4]{RSZ20_path_reg}. Regarding estimate \eqref{Estimate_FB}, we begin noting that, by standard estimates for backward stochastic differential 
equations, we have, for some constant $C$, independent of $\eps$,
\begin{equation}\label{EstimateProof1}
\|Y^\eps\|_{\S^2}^2 + \|Z^\eps\|_{\H^2} \ \leq \ C\,\E\bigg[\big|\phi(X_T^\eps,\P_{X_T^\eps})\big|^2 + \int_0^T \big|\Phi(t,X_t^\eps,0,0,A_t^\eps,\P_{(X_t^\eps,0,0,A_t^\eps)})\big|^2 dt \bigg]
\end{equation}
and
\begin{equation}\label{EstimateProof2}
\|\bar Y^\eps\|_{\S^2}^2 + \|\bar Z^\eps\|_{\H^2} \ \leq \ C\,\E\bigg[\int_0^T \big|\Psi(t,X_t^\eps,0,0,A_t^\eps,\P_{(X_t^\eps,0,0,A_t^\eps)})\big|^2 dt \bigg].
\end{equation}
We also have the following standard estimate for $X^\eps$
\begin{equation}\label{EstimateProof3}
\|X^\eps\|_{\S^2}^2 \ \leq \ C\,\E\bigg[|\xi|^2 + \int_0^T \big|b(t,0,\delta_0,A_t^\eps,\P_{A_t^\eps})\big|^2 dt + \int_0^T \big|\sigma(t,0,\delta_0,A_t^\eps,\P_{A_t^\eps})\big|^2 dt\bigg].
\end{equation}
Now, proceeding as in proof of the stability result \cite[Lemma A.1]{RSZ20_path_reg}, that is applying It\^o's formula to $\langle Y_t^\eps,X_t^\eps\rangle+\langle\bar Y_t^\eps,A_t^\eps\rangle$ between $t=0$ and $t=T$, using the monotonicity conditions of Assumption \ref{ass:two_scaleFB}-4), we obtain
\begin{align*}
&\E\big[\big\langle \phi(0,\delta_0),X_T^\eps\big\rangle\big] \ \leq \ \E\big[\big\langle Y_0^\eps,\xi\big\rangle\big] + \E\big[\big\langle\bar Y_0^\eps,\eta\big\rangle\big] - \lambda\,\E\bigg[\int_0^T\big|A_t^\eps\big|^2 dt\bigg] -\frac{1}{\eps}\E\bigg[\int_0^T\big|\bar Y_t^\eps\big|^2 dt\bigg] \\
&+ \frac{\beta^\eps}{\eps}\E\bigg[\int_0^T\text{tr}\big(\bar Z_t^\eps\Gamma\trans\big)dt\bigg] + \E\bigg[\int_0^T\big\langle\bar Y_t^\eps,b(t,0,\delta_0,0,\delta_0)\big\rangle dt\bigg] + \E\bigg[\int_0^T\textup{tr}\big(\sigma(t,0,\delta_0,0,\delta_0)(Z_t^\eps)\trans\big) dt\bigg] \notag \\
&- \E\bigg[\int_0^T\big\langle\bar X_t^\eps,\Phi(t,0,0,0,0,\delta_0)\big\rangle dt\bigg] 
- \E\bigg[\int_0^T\big\langle A_t^\eps,\Psi(t,0,0,0,0,\delta_0)\big\rangle dt\bigg]. \notag
\end{align*}
This provides in particular the following estimate:
\begin{align*}
&\lambda\,\E\bigg[\int_0^T\big|A_t^\eps\big|^2 dt\bigg] \ \leq \ - \E\big[\big\langle \phi(0,\delta_0),X_T^\eps\big\rangle\big] + \E\big[\big\langle Y_0^\eps,\xi\big\rangle\big] + \E\big[\big\langle\bar Y_0^\eps,\eta\big\rangle\big] \\
&+ \frac{\beta^\eps}{\eps}\E\bigg[\int_0^T\text{tr}\big(\bar Z_t^\eps\Gamma\trans\big)dt\bigg] + \E\bigg[\int_0^T\big\langle\bar Y_t^\eps,b(t,0,\delta_0,0,\delta_0)\big\rangle dt\bigg] + \E\bigg[\int_0^T\textup{tr}\big(\sigma(t,0,\delta_0,0,\delta_0)(Z_t^\eps)\trans\big) dt\bigg] \notag \\
&- \E\bigg[\int_0^T\big\langle\bar X_t^\eps,\Phi(t,0,0,0,0,\delta_0)\big\rangle dt\bigg] 
- \E\bigg[\int_0^T\big\langle A_t^\eps,\Psi(t,0,0,0,0,\delta_0)\big\rangle dt\bigg]. \notag
\end{align*}
Then, it is easy to show that
\begin{align*}
\|A^\eps\|_{\H^2}^2 \ &\leq \ C\bigg(\|X^\eps\|_{\S^2} + \|Y^\eps\|_{\S^2} + \|Z^\eps\|_{\H^2} + \|\bar Y^\eps\|_{\S^2} + \|\bar Z^\eps\|_{\H^2} + \E|\xi|^2 + \E|\eta|^2 + |\phi(0,\delta_0)|^2 \\
&\quad \ + \int_0^T\big|b(t,0,\delta_0,0,\delta_0)\big|^2 dt + \int_0^T\big|\sigma(t,0,\delta_0,0,\delta_0)\big|^2 dt + \int_0^T\big|\Phi(t,0,0,0,0,\delta_0)\big|^2 dt \\
&\quad \ + \int_0^T\big|\Psi(t,0,0,0,0,\delta_0)\big|^2 dt + \frac{(\beta^\eps)^2}{\eps^2}\textup{tr}\big(\Gamma\Gamma\trans\big)\bigg).
\end{align*}
Plugging the latter estimate into \eqref{EstimateProof1}-\eqref{EstimateProof2}-\eqref{EstimateProof3} we obtain estimate \eqref{Estimate_FB}.
\end{proof}

\begin{Assumption}{\bf(D)}\label{ass:optimal_controlFB}
Let Assumption \ref{ass:two_scaleFB} hold. For every $\xi\in L^2(\Omega,\Gc,\P;\R^d)$ there exists $\widehat\alpha\in\H_k^2$ such that
\begin{align}\label{widehat_alpha_FB}
&\big\|\Psi\big(\cdot,\widehat X_\cdot,\widehat Y_\cdot,\widehat Z_\cdot,\widehat\alpha_\cdot,\P_{(\widehat X_\cdot,\widehat Y_\cdot,\widehat Z_\cdot,\widehat\alpha_\cdot)}\big)\big\|_{\H^2}^2 \notag \\
&= \ \E\bigg[\int_0^T \big|\Psi\big(t,\widehat X_t,\widehat Y_t,\widehat Z_t,\widehat\alpha_t,\P_{(\widehat X_t,\widehat Y_t,\widehat Z_t,\widehat\alpha_t)}\big)\big|^2\,dt\bigg] \ = \ 0,
\end{align}
where $(\widehat X,\widehat Y,\widehat Z)\in\S_d^2\times\S_d^2\times\H_{d\times m}^2$ is the unique solution to the following (uncoupled) forward-backward system of stochastic differential equations on $[0,T]$:
\begin{equation}\label{widehat_X_FB}
	\left\{\begin{aligned}d\widehat X_t&=b\big(t,\widehat X_t,\P_{\widehat X_t},\widehat\alpha_t,\P_{\widehat\alpha_t}\big)dt+\sigma\big(t,\widehat X_t,\P_{\widehat X_t},\widehat\alpha_t,\P_{\widehat\alpha_t}\big) dW_t,\\
	d\widehat Y_t&=-\Phi\big(t,\widehat X_t,\widehat Y_t,\widehat Z_t,\widehat\alpha_t,\P_{(\widehat X_t,\widehat Y_t,\widehat Z_t,\widehat\alpha_t)}\big)dt + \widehat Z_t dW_t, \\
	\widehat X_0&=\xi,\ \widehat Y_T=\phi(\widehat X_T,\P_{\widehat X_T}).
	\end{aligned}\right.
\end{equation}
\end{Assumption}

\noindent We can now state the stochastic Tikhonov theorem for the forward-backward system \eqref{TwoScaleFB}.

\begin{theorem}[Stochastic Tikhonov Theorem II]\label{T:ConvergenceII}
Let Assumptions \ref{ass:two_scale}, \ref{ass:two_scaleFB}, \ref{ass:optimal_controlFB} hold. Moreover, let $(\widehat X,\widehat Y,\widehat Z,\widehat\alpha)$ in $\S_d^2\times\S_d^2\times\H_{d\times m}^2\times\H_k^2$ be as in Assumption \ref{ass:optimal_controlFB} and, for every $\varepsilon>0$, let $(X^\varepsilon,Y^\eps,Z^\eps,\bar Y^\eps,\bar Z^\eps,A^\varepsilon)$ in $\S_d^2\times\S_d^2\times\H_{d\times m}^2\times\S_k^2\times\H_{k\times m}^2\times\S_k^2$ be as in Proposition \ref{P:System_eps_FB}. Then, it holds that
\begin{equation*}
\big\|(A^\eps,Y^\eps,Z^\eps,\bar Y^\eps,\bar Z^\eps)-(\widehat\alpha,\widehat Y,\widehat Z,0,0)\big\|_{\H^2} \ \underset{\varepsilon\rightarrow0}{\longrightarrow} \ 0, \qquad\qquad \big\|X^\varepsilon - \widehat X\big\|_{\S^2} \ \underset{\varepsilon\rightarrow0}{\longrightarrow} \ 0.
\end{equation*}
\end{theorem}
\begin{proof}
Let $\widehat\alpha\in\H_k^2$ be the process appearing in Assumption \ref{ass:optimal_controlFB}. By Lemma \ref{L:Density} there exists $(F^n)_n\in\H_k^2$ such that $\|\alpha^n-\widehat\alpha\|_{\H^2}\rightarrow0$ as $n\rightarrow\infty$, with $\alpha_t^n=\int_0^t F_s^n ds$, $t\in[0,T]$ (as in \eqref{tilde_alpha}, we redefine $\alpha^n$ at $t=0$ as: $\alpha_0^n=\eta$). Then, for every $n\in\N$, let $(X^n,Y^n,Z^n)\in\S_d^2\times\S_d^2\times\H_{d\times m}^2$ be the unique solution to the following (uncoupled) forward-backward system of stochastic differential equations on $[0,T]$:
\begin{equation*}
	\left\{\begin{aligned}dX^n_t&=b\big(t,X^n_t,\P_{X^n_t},\alpha^n_t,\P_{\alpha^n_t}\big)dt+\sigma\big(t,X^n_t,\P_{X^n_t},\alpha^n_t,\P_{\alpha^n_t}\big) dW_t,\\
	dY^n_t&=-\Phi\big(t,X^n_t,Y^n_t,Z^n_t,\alpha^n_t,\P_{(X^n_t,Y^n_t,Z^n_t,\alpha^n_t)}\big)dt + Z^n_t dW_t, \\
	X^n_0&=\xi,\ Y^n_T=\phi(X^n_T,\P_{X^n_T}).
	\end{aligned}\right.
\end{equation*}
Let also $(\bar Y^n,\bar Z^n)\in\S_k^2\times\H_{k\times m}^2$ be the unique solution to the following backward stochastic differential equation on $[0,T]$:
\begin{equation}\label{widehat_X_FB_n_barYZ}
d\bar Y^n_t \ = \ -\Psi\big(t,X^n_t,Y^n_t,Z^n_t,\alpha^n_t,\P_{(X^n_t,Y^n_t,Z^n_t,\alpha^n_t)}\big)dt + \bar Z^n_t dW_t, \qquad \bar Y_T^n \ = \ 0.
\end{equation}
Since $\|\alpha^n-\widehat\alpha\|_{\H^2}\rightarrow0$ as $n\rightarrow\infty$, from standard estimates for (uncoupled) forward-backward systems it holds that
\begin{equation}\label{Convergences_FB_n}
\big\|X^n - \widehat X\big\|_{\S^2} \ \underset{n\rightarrow\infty}{\longrightarrow} \ 0, \qquad \big\|Y^n - \widehat Y\big\|_{\S^2} \ \underset{n\rightarrow\infty}{\longrightarrow} \ 0, \qquad \big\|Z^n - \widehat Z\big\|_{\H^2} \ \underset{n\rightarrow\infty}{\longrightarrow} \ 0.
\end{equation}
Then, recalling \eqref{widehat_alpha_FB}, from \eqref{widehat_X_FB_n_barYZ} we obtain
\begin{equation}\label{Convergences_FB_n_barYZ}
\big\|\bar Y^n\big\|_{\S^2} \ \underset{n\rightarrow\infty}{\longrightarrow} \ 0, \qquad \big\|\bar Z^n\big\|_{\H^2} \ \underset{n\rightarrow\infty}{\longrightarrow} \ 0.
\end{equation}
Now, proceeding as in proof of the stability result \cite[Lemma A.1]{RSZ20_path_reg}, that is applying It\^o's formula to $\langle Y_t^\eps-Y_t^n,X_t^\eps-X_t^n\rangle+\langle\bar Y_t^\eps-\bar Y_t^n,A_t^\eps-\alpha_t^n\rangle$ between $t=0$ and $t=T$, using the monotonicity conditions of Assumption \ref{ass:two_scaleFB}-4), we obtain
\begin{align}\label{IneqProof}
0 \ &\leq \ -\lambda\,\E\bigg[\int_0^T\big|A_t^\eps - \alpha_t^n\big|^2 dt\bigg] -\frac{1}{\eps}\E\bigg[\int_0^T\big\langle\bar Y_t^\eps - \bar Y_t^n,\bar Y_t^\eps\big\rangle dt\bigg] \\
&\quad \ + \E\bigg[\int_0^T\big\langle\bar Y_t^\eps - \bar Y_t^n,F_t^n\big\rangle dt\bigg] + \frac{\beta^\eps}{\eps}\E\bigg[\int_0^T\text{tr}\big((\bar Z_t^\eps - \bar Z_t^n)\Gamma\trans\big)dt\bigg]. \notag
\end{align}
Notice that
\begin{align*}
\int_0^T\big\langle\bar Y_t^\eps - \bar Y_t^n,F_t^n\rangle dt \ &= \ \int_0^T\big\langle\alpha_t^n,\Psi\big(t,X^n_t,Y^n_t,Z^n_t,\alpha^n_t,\P_{(X^n_t,Y^n_t,Z^n_t,\alpha^n_t)}\big) \\
&\quad \ - \Psi\big(t,X_t^\eps,Y_t^\eps,Z_t^\eps,A_t^\eps,\P_{(X_t^\eps,Y_t^\eps,Z_t^\eps,A_t^\eps)}\big)\big\rangle dt + \E\big[\big\langle\eta,\bar Y_0^n - \bar Y_0^\eps\big\rangle\big].
\end{align*}
Moreover, we have
\begin{align*}
&\E\big[\big\langle\eta,\bar Y_0^n - \bar Y_0^\eps\big\rangle\big] \ = \ \E\big[\big\langle\eta,\E\big[\bar Y_0^n - \bar Y_0^\eps\big|\Fc_0\big]\big\rangle\big] \ = \\
&= \ \E\bigg[\int_0^T\big\langle\eta,\Psi\big(t,X^n_t,Y^n_t,Z^n_t,\alpha^n_t,\P_{(X^n_t,Y^n_t,Z^n_t,\alpha^n_t)}\big) - \Psi\big(t,X_t^\eps,Y_t^\eps,Z_t^\eps,A_t^\eps,\P_{(X_t^\eps,Y_t^\eps,Z_t^\eps,A_t^\eps)}\big)\big\rangle\bigg].
\end{align*}
Then, plugging the above equalities into \eqref{IneqProof}, letting $n\rightarrow\infty$, and using convergences \eqref{Convergences_FB_n}-\eqref{Convergences_FB_n_barYZ}, we find
\begin{align}\label{InequalityProof4}
0 \ &\leq \ -\lambda\,\E\bigg[\int_0^T\big|A_t^\eps - \widehat\alpha_t\big|^2 dt\bigg] - \frac{1}{\eps}\E\bigg[\int_0^T\big|\bar Y_t^\eps\big|^2 dt\bigg] \\
&\quad \ - \E\bigg[\int_0^T\big\langle\eta+\widehat\alpha_t,\Psi\big(t,X_t^\eps,Y_t^\eps,Z_t^\eps,A_t^\eps,\P_{(X_t^\eps,Y_t^\eps,Z_t^\eps,A_t^\eps)}\big)\big\rangle dt\bigg] + \frac{\beta^\eps}{\eps}\E\bigg[\int_0^T\text{tr}\big(\bar Z_t^\eps\Gamma\trans\big)dt\bigg]. \notag
\end{align}
From the latter inequality we get
\begin{align*}
\frac{1}{\eps}\E\bigg[\int_0^T\big|\bar Y_t^\eps\big|^2 dt\bigg] \ &\leq \ - \, \E\bigg[\int_0^T\big\langle\eta+\widehat\alpha_t,\Psi\big(t,X_t^\eps,Y_t^\eps,Z_t^\eps,A_t^\eps,\P_{(X_t^\eps,Y_t^\eps,Z_t^\eps,A_t^\eps)}\big)\big\rangle dt\bigg] \\
&\quad \ + \frac{\beta^\eps}{\eps}\E\bigg[\int_0^T\text{tr}\big(\bar Z_t^\eps\Gamma\trans\big)dt\bigg].
\end{align*}
Then, using the linear growth condition of $\Psi$ and estimate \eqref{Estimate_FB}, we deduce that $\|\bar Y^\eps\|_{\H^2}\rightarrow0$ as $\eps\rightarrow0$. Recalling the equation satisfied by $\bar Y^\eps$, this in turn allows to prove that $\|\bar Z^\eps\|_{\H^2}\rightarrow0$ and therefore we get $\|\Psi(t,X_t^\eps,Y_t^\eps,Z_t^\eps,A_t^\eps,\P_{(X_t^\eps,Y_t^\eps,Z_t^\eps,A_t^\eps)})\|_{\H^2}\rightarrow0$ as $\eps\rightarrow0$. Now, using again \eqref{InequalityProof4}, we get
\begin{align*}
\lambda\,\E\bigg[\int_0^T\big|A_t^\eps - \widehat\alpha_t\big|^2 dt\bigg] \ &\leq \ - \, \E\bigg[\int_0^T\big\langle\eta+\widehat\alpha_t,\Psi\big(t,X_t^\eps,Y_t^\eps,Z_t^\eps,A_t^\eps,\P_{(X_t^\eps,Y_t^\eps,Z_t^\eps,A_t^\eps)}\big)\big\rangle dt\bigg] \\
&\quad \ + \frac{\beta^\eps}{\eps}\E\bigg[\int_0^T\text{tr}\big(\bar Z_t^\eps\Gamma\trans\big)dt\bigg].
\end{align*}
This implies that $\|A^\eps-\widehat\alpha\|_{\H^2}\rightarrow0$ as $\eps\rightarrow0$. From \eqref{EstimateX-X'} we then get that $\|X^\eps-\widehat X\|_{\S^2}\rightarrow0$ as $\eps\rightarrow0$. Finally, by standard estimates for backward stochastic differential equations, we conclude that $\|Y^\eps-\widehat Y\|_{\H^2}\rightarrow0$ and $\|Z^\eps-\widehat Z\|_{\H^2}\rightarrow0$ as $\eps\rightarrow0$.
\end{proof}

\section{Approximating control problems with two-scale FBSDEs}
\label{sec:control}

We consider the same probabilistic framework as in Section \ref{sec:Tik}, with the complete probability space $(\Omega,\Fc,\P)$, the $m$-dimensional Brownian motion $W=(W_t)_{t\geq0}$, the sub-$\sigma$-algebra $\Gc$, and the filtration $\F=(\Fc_t)_{t\geq0}$ given by \eqref{Filtr}. We work under the conditions for applying the Pontryagin maximum principle, see for instance \cite{Acciaio_SMP,bookMFG}. 

We now formulate the mean field stochastic optimal control problem. Let $A$ be a closed and convex subset of $\R^k$, which denotes the space of control actions. Let also $\Ac$ be the family of control processes, that is the set of all $\alpha\in\H_k^2$ such that $\alpha$ takes values in $A$.
Given $\xi\in L^2(\Omega,\Gc,\P;\R^d)$ and $\alpha\in\Ac$, the state equation of the mean field control problem reads as follows:
\begin{equation}\label{eq:state}
\begin{cases}
	dX_t=b\big(t,X_t,\P_{X_t},\alpha_t,\P_{\alpha_t}\big)dt+\sigma\big(t,X_t,\P_{X_t},\alpha_t,\P_{\alpha_t}\big) dW_t, \qquad t\in[0,T],\\
\hspace{1.35mm}X_0=\xi.
\end{cases}
\end{equation}
The cost functional to be minimized is defined as
\begin{equation}\label{eq:cost}
	J(\alpha) \ = \ \E\bigg[\int_0^Tf(t,X_t,\P_{X_t},\alpha_t,\P_{\alpha_t})dt+g(X_T,\P_{X_T})\bigg],
\end{equation}
where $X$ solves the controlled equation \eqref{eq:state}. The Hamiltonian function is given by
\begin{equation}\label{eq:Hamiltonian_red}
	H(t,x,\mu,y,z,a,\nu) \ = \ \langle b(t,x,\mu,a,\nu),y\rangle + \textup{tr}\big(\sigma(t,x,\mu,a,\nu)\,z\trans\big) + f(t,x,\mu,a,\nu),
\end{equation}
for every $(t,x,\mu,y,z,a,\nu)\in[0,T]\times\R^d\times\Pc_2(\R^d)\times\R^d\times\R^{d\times m}\times\R^k\times\Pc_2(\R^k)$.

\begin{Assumption}{\bf(E)}[Regularity of the coefficients]\label{ass:smooth}\quad
	\begin{itemize}
		\item The functions $b$, $\sigma$, $f$ are differentiable with respect to $x$ and $a$. Similarly, the function $g$ is differentiable with respect to $x$. Moreover, the function $(x,\mu,a,\nu)\mapsto\partial_x b(t,x,\mu,a,\nu)$ is Lipschitz and has at most linear growth uniformly with respect to $t$. The same conditions apply to $\partial_a b$, $\partial_x\sigma$, $\partial_a\sigma$, $\partial_x f$, $\partial_a f$, $\partial_x g$.
		\item The functions $b$, $\sigma$, $f$ are $L$-differentiable with respect to $\mu$ and $\nu$ (for the definition of $L$-differentiability see Appendix \ref{S:App}). Similarly, the function $g$ is $L$-differentiable with respect to $\mu$. Moreover, the map $(x,\mu,a,\nu,x')\mapsto\partial_\mu b(t,x,\mu,a,\nu)(x')$ is Lipschitz and has at most linear growth uniformly with respect to $t$. The same conditions apply to $\partial_\nu b$, $\partial_\mu\sigma$, $\partial_\nu\sigma$, $\partial_\mu f$, $\partial_\nu f$, $\partial_\mu g$.
\item The functions $\partial_x b$, $\partial_a b$, $\partial_x\sigma$, $\partial_a\sigma$ are bounded. Moreover, the functions $x'\mapsto\partial_\mu b(t,x,\mu,a,\nu)(x')$, $x'\mapsto\partial_\mu\sigma(t,x,\mu,a,\nu)(x')$ (resp. $a'\mapsto\partial_\nu b(t,x,\mu,a,\nu)(a')$, $a'\mapsto\partial_\nu\sigma(t,x,\mu,a,\nu)(a')$) have an $L^2(\mu)$-norm (resp. $L^2(\nu)$-norm) which is uniformly bounded with respect to the other variables.
	\end{itemize}
\end{Assumption}
\begin{Assumption}{\bf(F)}[Convexity]\label{ass:conv} \quad
	\begin{itemize}
		\item The function $g$ is convex: for every $(x',\mu'),(x,\mu)\in\R^d\times\Pc_2(\R^d)$,
		\[	g(x',\mu') \ \geq \ g(x,\mu)+ \langle\partial_x g(x,\mu),x'-x\rangle+\E[\langle\partial_\mu g(x,\mu)(X),X'-X\rangle],
		\]
		where $\P_{X'}=\mu'$, $\P_X=\mu$ (for the definition of $\partial_\mu g$ see Appendix \ref{S:App}).
		\item The function $H$ in \eqref{eq:Hamiltonian_red} is convex in $(x,\mu)$ and strongly convex in $(a,\nu)$: there exist $\lambda_1,\lambda_2>0$ such that, for every $(t,y,z)\in[0,T]\times\R^d\times\R^{d\times m}$, $(x,\mu,a,\nu),(x',\mu',a',\nu')\in\R^d\times\Pc_2(\R^d)\times\R^k\times\Pc_2(\R^k)$,
		\begin{align}
			H(t,x',\mu',y,z,a',\nu')&\ge H(t,x,\mu,y,z,a,\nu) \label{H:convex} \\
			&+\langle\partial_x H(t,x,\mu,y,z,a,\nu),x'-x\rangle+\langle\partial_a H(t,x,\mu,y,z,a,\nu),a'-a\rangle \notag \\
			&+\E[\langle\partial_\mu H(t,x,\mu,y,z,a,\nu)(X),X'-X\rangle+\langle\partial_\nu H(t,x,\mu,y,z,a,\nu)(\alpha),\alpha'-\alpha\rangle] \notag \\
			& +\lambda_1 |a'-a|^2+\lambda_2 \E\big[|\alpha'-\alpha|^2\big], \notag
		\end{align}
	where $\P_{X'}=\mu'$, $\P_X=\mu$, $\P_{\alpha'}=\nu'$, $\P_\alpha=\nu$.
	\end{itemize}
\end{Assumption}
\noindent For any control process $\alpha\in\Ac$, with controlled state $X$ solution to \eqref{eq:state}, we consider the so-called adjoint equation (for the definition of $\partial_\mu H$ see Appendix \ref{S:App}):
\begin{equation*}
\begin{cases}
		dY_t=-\partial_xH(t,X_t,\P_{X_t},Y_t,Z_t,\alpha_t,\P_{\alpha_t})dt-\tilde{\E}\big[\partial_\mu H(t,\tilde{X}_t,\P_{X_t},\tilde{Y}_t,\tilde Z_t,\tilde{\alpha}_t,\P_{\alpha_t})(X_t)\big]dt\\
\hspace{9.5mm}+Z_tdW_t, \hspace{7.8cm} t\in[0,T], \\
		\hspace{1.2mm}Y_T=\partial_xg(X_T,\P_{X_T})+\tilde{\E}\big[\partial_\mu g(\tilde{X}_T,\P_{X_T})(X_T)\big],	
\end{cases}
\end{equation*}
where the triple $(\tilde{X},\tilde{Y},\tilde Z,\tilde{\alpha})$ has the same law as $(X,Y,Z,\alpha)$ and is defined on another probability space $(\tilde{\Omega},\tilde{\cF},\tilde{\P})$, where the expectation is denoted by $\tilde\E$.

\begin{Assumption}{\bf(G)}\label{ass:standing_SMP}
Let Assumptions \ref{ass:smooth}-\ref{ass:conv} hold. For every $\xi\in L^2(\Omega,\Gc,\P;\R^d)$ there exists $\widehat\alpha\in\H_k^2$ satisfying
\begin{equation}\label{eq:critical}
\partial_a H(t,\widehat X_t,\P_{\widehat X_t},\widehat Y_t,\widehat Z_t,\widehat\alpha_t,\P_{\widehat\alpha_t})+\tilde{\E}\big[\partial_\nu H(t,\tilde{X}_t,\P_{\widehat X_t},\tilde{Y}_t,\tilde Z_t,\tilde{\alpha}_t,\P_{\widehat\alpha_t})(\widehat\alpha_t)\big] = 0, \quad dt\otimes d\P\text{-}a.s.
\end{equation}
where $(\tilde{X},\tilde{Y},\tilde Z,\tilde{\alpha})$ has the same law as $(\widehat X,\widehat Y,\widehat Z,\widehat\alpha)$ and is defined on another probability space $(\tilde{\Omega},\tilde{\cF},\tilde{\P})$, with expectation denoted by $\tilde\E$. Moreover, $(\widehat X,\widehat Y,\widehat Z)\in\S_d^2\times\S_d^2\times\H_{d\times m}^2$ is the unique solution to the following (uncoupled) forward-backward system of stochastic differential equations on $[0,T]$:
\begin{equation}\label{eq:SMP_FBSDE}
	\left\{\begin{aligned}d\widehat X_t&=b\big(t,\widehat X_t,\P_{\widehat X_t},\widehat\alpha_t,\P_{\widehat\alpha_t}\big)dt+\sigma\big(t,\widehat X_t,\P_{\widehat X_t},\widehat\alpha_t,\P_{\widehat\alpha_t}\big) dW_t,\\
	d\widehat Y_t&=-\,\partial_xH(t,\widehat X_t,\P_{\widehat X_t},\widehat Y_t,\widehat Z_t,\widehat\alpha_t,\P_{\widehat\alpha_t})dt-\tilde{\E}\big[\partial_\mu H(t,\tilde{X}_t,\P_{\widehat X_t},\tilde{Y}_t,\tilde Z_t,\tilde{\alpha}_t,\P_{\widehat\alpha_t})(\widehat X_t)\big]dt \\
&\,\,\,\,\,\,\,+ \widehat Z_t dW_t, \\
	\widehat X_0&=\xi,\ \widehat Y_T=\phi(\widehat X_T,\P_{\widehat X_T}).
	\end{aligned}\right.
\end{equation}
\end{Assumption}


\begin{theorem}[Stochastic maximum principle]\label{T:SMP}
Under Assumption \ref{ass:standing_SMP}, the process $\widehat\alpha$, satisfying \eqref{eq:critical}, is the unique optimal control process of the mean field control problem \eqref{eq:state}--\eqref{eq:cost}. 
\end{theorem}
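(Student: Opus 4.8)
I would prove Theorem \ref{T:SMP} by the classical \emph{sufficient} part of the Pontryagin principle, adapted to the (extended) mean field setting, the point being that convexity (Assumption \ref{ass:conv}) together with the stationarity identity \eqref{eq:critical} from Assumption \ref{ass:standing_SMP} upgrades a critical point to a global minimizer. First I would check admissibility: since $\widehat\alpha$ takes values in the open set $A$, so does $\hat A_t=\widehat\alpha(t,\hat X_t,\hat Y_t,\P_{(\hat X_t,\hat Y_t)})$; and the Lipschitz property of $\widehat\alpha$ in $(x,y,\pi)$ together with boundedness of $t\mapsto\widehat\alpha(t,0,0,\delta_{(0,0)})$ and $\hat X,\hat Y\in\S_d^2$ gives $\hat A\in\H_k^2$, so $\hat A\in\Ac$.

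Next, fix an arbitrary $\alpha\in\Ac$ with controlled state $X=X^\alpha$ solving \eqref{eq:state}, and estimate $J(\alpha)-J(\hat A)$ from below. For the terminal term I would use convexity of $g$ in $(x,\mu)$ together with the ``copy-exchange'' identity for the $L$-derivative (exchanging $X_T$ and an independent copy $\tilde X_T$ under $\E\otimes\tilde\E$, which brings $\tilde\E[\partial_\mu g(\tilde X_T,\P_{X_T})(X_T)]$ paired against $X_T-\hat X_T$), obtaining $\E[g(X_T,\P_{X_T})-g(\hat X_T,\P_{\hat X_T})]\ge\E[\langle\hat Y_T,X_T-\hat X_T\rangle]$ thanks to the terminal condition in \eqref{eq:SMP_FBSDE}. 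Then I would apply It\^o's formula to $\langle\hat Y_t,X_t-\hat X_t\rangle$ on $[0,T]$ (the value at $t=0$ vanishes because $X_0=\hat X_0=\xi$), substitute the dynamics of $\hat Y$ and of $X-\hat X$, take expectations to remove the martingale parts, and use $f=H-\langle b,\cdot\rangle$ to re-express the running part of $J(\alpha)-J(\hat A)$ through $H$. The one genuinely delicate bookkeeping point here is the quadratic-covariation term $\E\int_0^T\langle\hat Z_t,\sigma(t,X_t,\P_{X_t})-\sigma(t,\hat X_t,\P_{\hat X_t})\rangle\,dt$ coming from the (uncontrolled but state-dependent) $\sigma$: this is where the affine form of $\sigma$ in Assumption \ref{ass:smooth} is used, so that the extra term $\langle\sigma(t,x,\mu),z\rangle$ — being affine in $(x,\mu)$ and independent of $(a,\nu)$ — can be absorbed into $H$ without affecting either the convexity \eqref{H:convex} in $(x,\mu)$ or the stationarity condition \eqref{eq:critical}.

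After these manipulations $J(\alpha)-J(\hat A)$ is bounded below by $\E\int_0^T\big[H(t,X_t,\P_{X_t},\hat Y_t,\alpha_t,\P_{\alpha_t})-H(t,\hat X_t,\P_{\hat X_t},\hat Y_t,\hat A_t,\P_{\hat A_t})-\langle\partial_xH+\tilde\E[\partial_\mu H(\tilde{\,\cdot\,})(\hat X_t)],X_t-\hat X_t\rangle\big]dt$, where the derivatives of $H$ are evaluated along $(\hat X,\hat Y,\hat A)$. Invoking the strong convexity \eqref{H:convex} and again the copy-exchange identity for $\partial_\mu H$ and $\partial_\nu H$: the $\partial_x H$ and $\partial_\mu H$ contributions cancel exactly against the terms above; the $\partial_a H$ and $\partial_\nu H$ contributions combine into $\langle\partial_aH+\tilde\E[\partial_\nu H(\tilde{\,\cdot\,})(\hat A_t)],\alpha_t-\hat A_t\rangle$, which is identically zero by \eqref{eq:critical}; and the quadratic remainders $\lambda_1|\alpha_t-\hat A_t|^2+\lambda_2\E[|\alpha_t-\hat A_t|^2]$ survive. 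This gives $J(\alpha)-J(\hat A)\ge(\lambda_1+\lambda_2)\,\E\int_0^T|\alpha_t-\hat A_t|^2\,dt\ge0$, so $\hat A$ is optimal; and if $\alpha$ is any optimal control then equality forces $\E\int_0^T|\alpha_t-\hat A_t|^2\,dt=0$, i.e.\ $\alpha=\hat A$ in $\H_k^2$, which is uniqueness.

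The main obstacle is making the law-dependent manipulations rigorous: justifying the copy-exchange (Fubini-type) identities for $\partial_\mu g$, $\partial_\mu H$ and $\partial_\nu H$ under the integrability and $L$-differentiability granted by Assumption \ref{ass:smooth}, and verifying that the extended mean field structure — notably the separated form of $\partial_\nu f$ and the precise shape of \eqref{eq:critical} — is exactly what makes the first-order terms in $(a,\nu)$ cancel against \eqref{eq:critical}. A secondary technical point, as noted above, is the consistent handling of the state-dependent $\sigma$ via its affine structure in Assumption \ref{ass:smooth}.
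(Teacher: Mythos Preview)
Your proposal is correct. The paper's own proof is essentially a two-line citation: it observes that strong convexity of $H$ together with openness of $A$ makes $\widehat\alpha$ the unique Hamiltonian minimizer, and then invokes \cite[Theorem~3.5]{Acciaio_SMP} for the conclusion. What you have written is precisely the argument that lies behind that cited theorem --- the sufficient Pontryagin principle via convexity (It\^o's formula on $\langle\hat Y,X-\hat X\rangle$, convexity of $g$ and $H$, the copy-exchange identities for the $L$-derivatives, cancellation of the first-order $(a,\nu)$-terms against \eqref{eq:critical}, and strict optimality plus uniqueness from the $\lambda_1+\lambda_2$ remainder). So the approaches coincide; you have simply unrolled the citation rather than invoking it. One minor remark: since in this paper $\sigma$ is uncontrolled and the reduced Hamiltonian \eqref{eq:Hamiltonian_red} is used throughout, your discussion of absorbing the $\sigma$-covariation term is slightly more elaborate than strictly necessary here, though it is not wrong.
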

\begin{proof}
The strong convexity assumption \eqref{H:convex} on $H$ and the fact that $A$ is a closed and convex subset of $\R^k$ guarantee that $\widehat\alpha$ is the unique minimizer. Then, from \cite[Theorem 3.5]{Acciaio_SMP} the result follows. 
\end{proof}
 
\noindent Theorem \ref{T:SMP} states that, in order to solve the mean field control problem \eqref{eq:state}--\eqref{eq:cost}, it is enough to solve the McKean-Vlasov forward-backward system \eqref{eq:SMP_FBSDE}. Our aim is now to exploit the results of Section \ref{sec:Tik} in order to provide an approximate solution to \eqref{eq:SMP_FBSDE} through a suitable two-scale system. In particular, for every $\eps>0$, $\xi\in L^2(\Omega,\Gc,\P;\R^d)$, $\eta\in L^2(\Omega,\Gc,\P;\R^k)$, consider the following extended two-scale system of stochastic differential equations on $[0,T]$:
\begin{equation}\label{eq:two scales_FBSDE}
\hspace{-1mm}\begin{cases}
		dX^\varepsilon_t=b(t,X^\varepsilon_t,\P_{X^\varepsilon_t},A^\varepsilon_t,\P_{A^\varepsilon_t})dt+\sigma(t,X^\varepsilon_t,\P_{X^\varepsilon_t},A^\varepsilon_t,\P_{A^\varepsilon_t}) dW_t,\\
		dY^\varepsilon_t=-\partial_xH(t,X^\varepsilon_t,\P_{X^\varepsilon_t},Y^\varepsilon_t,Z_t^\eps,A^\varepsilon_t,\P_{A^\varepsilon_t})dt-\tilde{\E}\big[\partial_\mu H(t,\tilde{X}^\varepsilon_t,\P_{X^\varepsilon_t},\tilde{Y}^\varepsilon_t,\tilde Z_t^\eps,\tilde{A}^\varepsilon_t,\P_{A^\varepsilon_t})(X^\varepsilon_t)\big]dt\\
\hspace{1.02cm}+Z^\varepsilon_tdW_t,\\
d\bar Y^\varepsilon_t=-\partial_aH(t,X^\varepsilon_t,\P_{X^\varepsilon_t},Y^\varepsilon_t,Z_t^\eps,A^\varepsilon_t,\P_{A^\varepsilon_t})dt-\tilde{\E}\big[\partial_\nu H(t,\tilde{X}^\varepsilon_t,\P_{X^\varepsilon_t},\tilde{Y}^\varepsilon_t,\tilde Z_t^\eps,\tilde{A}^\varepsilon_t,\P_{A^\varepsilon_t})(A^\varepsilon_t)\big]dt\\
\hspace{1.02cm}+\bar Z^\varepsilon_tdW_t,\\
		\eps dA^\varepsilon_t=-\bar Y_t^\eps dt+\beta^\eps\Gamma dW_t,\\
		X^\varepsilon_0=\xi,\ Y^\varepsilon_T=\partial_x g(X^\varepsilon_T,\P_{X^\varepsilon_T})+\tilde{\E}\big[\partial_\mu g(\tilde{X}^\varepsilon_T,\P_{X^\varepsilon_T})(X^\varepsilon_T)\big], \ \bar Y_T^\varepsilon=0, \ A_0^\eps=\eta,
\end{cases}
\end{equation}
where the triple $(\tilde{X}^\eps,\tilde{Y}^\eps,\tilde Z^\eps,\tilde{A}^\eps)$ has the same law as $(X^\eps,Y^\eps,Z^\eps,A^\eps)$ and is defined on another probability space $(\tilde{\Omega},\tilde{\cF},\tilde{\P})$, where the expectation is denoted by $\tilde\E$. Moreover, $\beta^\varepsilon=o(\varepsilon)$ as $\eps\rightarrow0$ and $\Gamma$ is a matrix of size $k\times m$.

\begin{remark}\label{R:Form_of_the_FBSDE}
Notice that it would be more natural to consider the following system, for every $\eps>0$, $\xi\in L^2(\Omega,\Gc,\P;\R^d)$, $\eta\in L^2(\Omega,\Gc,\P;\R^k)$:
\begin{equation}\label{eq:two scales_FBSDE_II}
\hspace{-3mm}\begin{cases}
		dX^\varepsilon_t=b(t,X^\varepsilon_t,\P_{X^\varepsilon_t},A^\varepsilon_t,\P_{A^\varepsilon_t})dt+\sigma(t,X^\varepsilon_t,\P_{X^\varepsilon_t},A^\varepsilon_t,\P_{A^\varepsilon_t}) dW_t,\\
		dY^\varepsilon_t=-\partial_xH(t,X^\varepsilon_t,\P_{X^\varepsilon_t},Y^\varepsilon_t,Z_t^\eps,A^\varepsilon_t,\P_{A^\varepsilon_t})dt-\tilde{\E}\big[\partial_\mu H(t,\tilde{X}^\varepsilon_t,\P_{X^\varepsilon_t},\tilde{Y}^\varepsilon_t,\tilde Z_t^\eps,\tilde{A}^\varepsilon_t,\P_{A^\varepsilon_t})(X^\varepsilon_t)\big]dt\\
\hspace{1.02cm}+Z^\varepsilon_tdW_t,\\
		\varepsilon dA^\varepsilon_t=-\partial_aH(t,X^\varepsilon_t,\P_{X^\varepsilon_t},Y^\varepsilon_t,Z_t^\eps,A^\varepsilon_t,\P_{A^\varepsilon_t})dt-\tilde{\E}\big[\partial_\nu H(t,\tilde{X}^\varepsilon_t,\P_{X^\varepsilon_t},\tilde{Y}^\varepsilon_t,\tilde Z_t^\eps,\tilde{A}^\varepsilon_t,\P_{A^\varepsilon_t})(A^\varepsilon_t)\big]dt\\
\hspace{1.14cm}+\beta^\varepsilon \Gamma dW_t,\\
		X^\varepsilon_0=\xi,\ Y^\varepsilon_T=\partial_x g(X^\varepsilon_T,\P_{X^\varepsilon_T})+\tilde{\E}\big[\partial_\mu g(\tilde{X}^\varepsilon_T,\P_{X^\varepsilon_T})(X^\varepsilon_T)\big], \ A_0^\eps=\eta,
\end{cases}
\end{equation}
where the triple $(\tilde{X}^\eps,\tilde{Y}^\eps,\tilde Z^\eps,\tilde{A}^\eps)$ has the same law as $(X^\eps,Y^\eps,Z^\eps,A^\eps)$ and is defined on another probability space $(\tilde{\Omega},\tilde{\cF},\tilde{\P})$, where the expectation is denoted by $\tilde\E$. However, in the literature there is not yet well-posedness and stability results for such a system in a McKean-Vlasov setting. As a matter of fact, results \cite[Corollary 2.4]{RSZ20_path_reg} and \cite[Lemma A.1]{RSZ20_path_reg}, that we exploit below for system \eqref{eq:two scales_FBSDE}, rely on a $G$-monotonicity condition (see \cite{RSZ20_path_reg}) that in its present form does not hold in general for system \eqref{eq:two scales_FBSDE_II}.
\end{remark}


\begin{lemma}\label{lem:existence_uniq}
Suppose that Assumption \ref{ass:standing_SMP} holds. Then, for every $\varepsilon>0$, system \eqref{eq:two scales_FBSDE} admits a unique solution $(X^\eps,Y^\eps,Z^\eps,\bar Y^\eps,\bar Z^\eps,A^\eps)\in\S_d^2\times\S_d^2\times\H_{d\times m}^2\times\S_k^2\times\H_{k\times m}^2\times\S_k^2$.
\end{lemma}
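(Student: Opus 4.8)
The plan is to reduce \eqref{eq:two scales_FBSDE} to a fixed-point problem for its fast component, following the pattern of the proof of Proposition \ref{P:System_eps}. Fix $\eps>0$ and define a map $\Phi\colon\H_k^2\to\H_k^2$ as follows. Given $\alpha\in\H_k^2$, replace $A^\eps$ by $\alpha$ in the first two lines of \eqref{eq:two scales_FBSDE}; the resulting McKean--Vlasov forward--backward system has coefficients that are progressively measurable, Lipschitz and of linear growth in $(x,\mu,y)$ by Assumption \ref{ass:smooth}, it is \emph{decoupled} in the sense that the $X$-equation does not involve $(Y,Z)$ and the $Y$-driver does not involve $Z$, and, $b$ being affine, the $Y$-driver is linear in $y$ with bounded coefficients while the terminal datum lies in $L^2$; hence it admits a unique solution $(X^\alpha,Y^\alpha,Z^\alpha)\in\S_d^2\times\S_d^2\times\H_{d\times m}^2$, exactly as in Lemma \ref{L:FBSDE}, i.e. by \cite[Theorem 3.4]{RSZ20_opt_control}. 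Next, plug $X^\eps=X^\alpha$ and $Y^\eps=Y^\alpha$ into the third line of \eqref{eq:two scales_FBSDE}: its drift, regarded as a coefficient $B(t,\omega,a,\nu)$, is progressively measurable and, using the separated form of $\partial_\nu f$ in Assumption \ref{ass:smooth}, Lipschitz and of linear growth in $(a,\nu)$, while the strong convexity \eqref{H:convex} of $H$ in $(a,\nu)$ provides the monotonicity condition \eqref{Monotonicity} for $B$ with a suitable constant $\lambda>0$ depending on $\lambda_1,\lambda_2$; since moreover $\beta^\eps=o(\eps)$ and $G$ is Lipschitz and of linear growth (Assumption \ref{ass:extra_two_scales}), the argument used in the proof of Proposition \ref{P:System_eps} to construct the map $\Phi$ there (based on \cite[Theorem 3.21]{PardouxRascanu}) yields a unique solution $A\in\S_k^2$; set $\Phi(\alpha):=A$.

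By construction, a fixed point $A^\eps$ of $\Phi$ produces, upon setting $(X^\eps,Y^\eps,Z^\eps):=(X^{A^\eps},Y^{A^\eps},Z^{A^\eps})$, a solution of \eqref{eq:two scales_FBSDE} in $\S_d^2\times\S_d^2\times\H_{d\times m}^2\times\S_k^2$; conversely, by uniqueness of the $X$-, $Y$- and $A$-equations given the remaining components, the fast component of any solution of \eqref{eq:two scales_FBSDE} is a fixed point of $\Phi$. Thus it remains to prove that $\Phi$ admits exactly one fixed point.

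I would establish this by showing that $\Phi$ is a contraction on $\H_k^2$ with respect to the equivalent norm $\|\beta\|_\gamma^2:=\E\int_0^T e^{-\gamma t}|\beta_t|^2\,dt$, for $\gamma=\gamma(\eps)$ large enough. The core consists of three weighted a priori estimates. First, applying Gronwall's inequality to $t\mapsto\E|X_t^\alpha-X_t^{\alpha'}|^2$ together with the Lipschitz property of $b,\sigma$ and \eqref{EstimateW2}, one gets $\|X^\alpha-X^{\alpha'}\|_\gamma^2\le\frac{C}{\gamma}\|\alpha-\alpha'\|_\gamma^2$. Second, the standard weighted a priori estimate for McKean--Vlasov BSDEs, exploiting that the $Y$-driver is linear in $y$ with bounded coefficients and Lipschitz in $(x,\mu,a,\nu)$ and in the law of $Y$, gives $\|Y^\alpha-Y^{\alpha'}\|_\gamma^2\le\frac{C}{\gamma}\big(\|X^\alpha-X^{\alpha'}\|_\gamma^2+\|\alpha-\alpha'\|_\gamma^2\big)$. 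Third, from the differential inequality $\frac{d}{dt}|A_t-A'_t|^2\le-\frac{\lambda}{\eps}|A_t-A'_t|^2+\frac{C}{\eps}|\delta B_t|^2$, where $\delta B_t$ is the difference of the two $A$-drifts and is bounded by $\frac{C}{\eps}$ times $|X_t^\alpha-X_t^{\alpha'}|+|Y_t^\alpha-Y_t^{\alpha'}|$ plus the corresponding Wasserstein distances, multiplying by $e^{-\gamma t}$ and integrating yields $\|A-A'\|_\gamma^2\le C_\eps\big(\|X^\alpha-X^{\alpha'}\|_\gamma^2+\|Y^\alpha-Y^{\alpha'}\|_\gamma^2\big)$, where $C_\eps$ is independent of $\gamma$ (the two occurrences of $1/\eps$ cancel). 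Combining the three estimates gives $\|\Phi(\alpha)-\Phi(\alpha')\|_\gamma^2\le\frac{C_\eps}{\gamma}\|\alpha-\alpha'\|_\gamma^2<\|\alpha-\alpha'\|_\gamma^2$ for $\gamma>C_\eps$, so Banach's fixed point theorem provides the unique fixed point $A^\eps\in\H_k^2$; it belongs to $\S_k^2$ as the solution of the fast SDE.

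I expect the main obstacle to be the third estimate: one must verify that the order-$1/\eps$ strong monotonicity of the fast drift exactly compensates its order-$1/\eps$ Lipschitz dependence on $(X_t^\alpha,Y_t^\alpha)$, so that the resulting constant $C_\eps$ is finite (which is all that is needed here, as $\eps$ is fixed) and, crucially, independent of $\gamma$; the exponential-weight bookkeeping has to be carried out carefully. A secondary, standard but delicate point is the passage from the strong convexity \eqref{H:convex} of $H$ in $(a,\nu)$ to the monotonicity \eqref{Monotonicity} of the $A$-drift in the extended mean-field setting, where the Lions-derivative term $\partial_\nu H$ is present; this is where the separated-form hypothesis on $\partial_\nu f$ in Assumption \ref{ass:smooth} is used. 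Alternatively, one may avoid the fixed-point argument by regarding \eqref{eq:two scales_FBSDE} as a single coupled McKean--Vlasov FBSDE in the enlarged forward variable $(X^\eps,A^\eps)$ and invoking the well-posedness theory of \cite{RSZ20_opt_control}, the required monotonicity being inherited from the stochastic-maximum-principle structure of the $(X,Y)$-block and the strong convexity of $H$ in $(a,\nu)$ for the $A$-block, the positive factors $1/\eps$ only improving the monotonicity constants.
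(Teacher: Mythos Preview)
Your main fixed-point strategy has a genuine gap in the second weighted estimate. The exponential weight $e^{-\gamma t}$ is tailored to \emph{forward} equations: applying It\^o to $e^{-\gamma t}|Y_t^\alpha-Y_t^{\alpha'}|^2$ on $[t,T]$ produces a term $+\gamma\int_t^T e^{-\gamma s}\E|Y_s^\alpha-Y_s^{\alpha'}|^2\,ds$ on the right-hand side, with the wrong sign, so no $1/\gamma$ smallness appears. Equivalently, the standard BSDE a priori estimate only gives $\|Y^\alpha-Y^{\alpha'}\|_{\H^2}^2\le C\big(\E|X_T^\alpha-X_T^{\alpha'}|^2+\|X^\alpha-X^{\alpha'}\|_{\H^2}^2+\|\alpha-\alpha'\|_{\H^2}^2\big)$, and converting $\|\alpha-\alpha'\|_{\H^2}^2$ back to $\|\cdot\|_\gamma^2$ costs a factor $e^{\gamma T}$, which destroys the contraction. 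This forward/backward incompatibility of exponential weights is precisely why coupled FBSDEs are not, in general, solvable by a single global contraction and require instead small-time iteration or the method of continuation under monotonicity. Your chain of inequalities $\|\Phi(\alpha)-\Phi(\alpha')\|_\gamma^2\le\frac{C_\eps}{\gamma}\|\alpha-\alpha'\|_\gamma^2$ therefore does not close.

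The alternative you sketch in your last sentence is exactly what the paper does, and is the correct route here. The paper regards \eqref{eq:two scales_FBSDE} as a coupled McKean--Vlasov FBSDE with enlarged forward component $(X^\eps,A^\eps)$ and invokes \cite[Corollary 2.4]{RSZ20_path_reg}. The only nontrivial hypothesis to verify is the $G$-monotonicity condition of that reference; the paper takes $G=[\mathbb{I}_d\,|\,\mathbf{0}_{d\times k}]$ and $\phi_1=\phi_2=(\lambda_1+\lambda_2)\|A\|_{L^2}$, after which the required inequality reduces to a statement that follows directly from the convexity \eqref{H:convex} of $H$, exactly as in \cite[Proposition 3.2]{RSZ20_opt_control}. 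Note that the monotonicity used here is the one coming from the Pontryagin structure of the $(X,Y)$-block, not the strong convexity in $(a,\nu)$ of the fast drift; the latter plays no role in this lemma.
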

\begin{proof}
The result is a direct consequence of \cite[Corollary 2.4]{RSZ20_path_reg}, we simply check that the assumptions required by \cite[Corollary 2.4]{RSZ20_path_reg} hold true. First of all, notice that the Lipschitz and linear growth conditions are a consequence of Assumption \ref{ass:smooth}. Then, it only remains to check the $G$-monotonicity condition reported in the statement of \cite[Corollary 2.4]{RSZ20_path_reg}. We choose $G\in\R^{(d+k)\times (d+k)}$ as $G=I_{d+k}$, the $(d+k)$-dimensional identity matrix. 
Moreover, we choose $\alpha_1=\beta_1=0$, $\beta_2=1$, and $\phi_2(A_1,\bar Y_1,A_2,\bar Y_2)=2(\lambda_1+\lambda_2)\|A_1-A_2\|_{L^2}^2+\frac{1}{\eps}\|\bar Y_1-\bar Y_2\|_{L^2}^2$, $\forall\,(A_i,\bar Y_i)\in L^2(\R^k)\times L^2(\R^k)$, $i=1,2$. With such choices, checking the $G$-monotonicity condition reduces to estimate the following: for every $t\in[0,T]$, $(X_i,A_i,Y_i,Z_i,\bar Y_i,\bar Z_i)\in L^2(\R^d)\times L^2(\R^k)\times L^2(\R^d)\times L^2(\R^{d\times m})\times L^2(\R^k)\times L^2(\R^{k\times m})$, for $i=1,2$,
\begin{align*}
&\E[\langle b(t,X_1,\P_{X_1}, A_1,\P_{A_1}) - b(t, X_2,\P_{X_2}, A_2,\P_{A_2}),Y_1 - Y_2\rangle ] - \frac{1}{\eps^3}\|\bar Y_1 - \bar Y_2\|_{L^2}^2 \\
&+ \E\big[\text{tr}\big((\sigma(t,X_1,\P_{X_1},A_1,\P_{A_1}) - \sigma(t,X_2,\P_{X_2},A_2,\P_{A_2}))(Z_1-Z_2)\trans\big)\big] \\
&-\E[\langle \partial_x H(t,X_1, \P_{X_1},Y_1,Z_1, A_1, \P_{A_1}) - \partial_x H(t,X_2, \P_{X_2},Y_2,Z_2, A_2, \P_{A_2}), X_1 -X_2\rangle ]\\
&-\E\big[\tilde{\E}[\langle \partial_\mu H(t,X_1, \P_{X_1},Y_1,Z_1, A_1, \P_{A_1})(\tilde{X}_1) - \partial_\mu H(t,X_2, \P_{X_2},Y_2,Z_2, A_2, \P_{A_2})(\tilde{X}_2), \tilde{X}_1 -\tilde{X}_2\rangle] \big] \\
&- \E[\langle\partial_a H(t,X_1,\P_{X_1},Y_1,Z_1,A_1,\P_{A_1}) - \partial_a H(t,X_2,\P_{X_2},Y_2,Z_2,A_2,\P_{A_2}),A_1 - A_2\rangle] \\
&- \E\big[\tilde{\E}[\langle\partial_\nu H(t,X_1,\P_{X_1},Y_1,Z_1,A_1,\P_{A_1})(\tilde A_1) - \partial_\nu H(t,X_2,\P_{X_2},Y_2,Z_2,A_2,\P_{A_2})(\tilde A_1),\tilde A_1-\tilde A_2\rangle]\big],
\end{align*}
where $(\tilde X_1,\tilde A_1,\tilde{\bar Y}_1,\tilde X_2,\tilde A_2,\tilde{\bar Y}_2)$ has the same law as $(X_1,A_1,\bar Y_1,X_2,A_2,\bar Y_2)$ and is defined on another probability space $(\tilde\Omega,\tilde\Fc,\tilde\P)$, where the expectation is denoted by $\tilde\E$. From the convexity assumption \eqref{H:convex} on $H$, we obtain that the above quantity is bounded from above by
\begin{align*}
&-2(\lambda_1+\lambda_2)\|A_1-A_2\|_{L^2}^2 - \frac{1}{\eps^3}\|\bar Y_1 - \bar Y_2\|_{L^2}^2, 
\end{align*}
which shows the validity of the $G$-monotonicity condition.
\end{proof}

\noindent We now compare the two FBSDE systems \eqref{eq:SMP_FBSDE} and \eqref{eq:two scales_FBSDE}. In the first case, if $\widehat{\alpha}$ is not known, it needs to be numerically computed. This means that every discretization of the system \eqref{eq:SMP_FBSDE} involves, \emph{at each step}, a further numerical approximation of the argmin of the Hamiltonian $H$. In \eqref{eq:two scales_FBSDE} this further computation is avoided entirely and it is simply replaced by a forward-backward system of SDEs, namely the third and fourth equations, which plays the role of a stochastic gradient descent method which evolves at a different time scale.

\noindent The following is the main result of this section.
We prove that, as $\varepsilon\to 0$, the solution of the two-scale forward-backward system \eqref{eq:two scales_FBSDE} converges to the solution of \eqref{eq:SMP_FBSDE}.

\begin{theorem}\label{thm:main_SMP}
Suppose that Assumption \ref{ass:standing_SMP} holds. Let $(\widehat X,\widehat Y,\widehat Z)\in\S_d^2\times\S_d^2\times\H_{d\times m}^2$ be the unique solution of \eqref{eq:SMP_FBSDE} and, for every $\varepsilon>0$, let $(X^\eps,Y^\eps,Z^\eps,\bar Y^\eps,\bar Z^\eps,A^\eps)\in\S_d^2\times\S_d^2\times\H_{d\times m}^2\times\S_k^2\times\H_{k\times m}^2\times\S_k^2$ be the unique solution of \eqref{eq:two scales_FBSDE}. Let also $\widehat\alpha$ be the process satisfying \eqref{eq:critical}. Then
\[
\big\|(Y^\varepsilon,Z^\varepsilon,\bar Y^\eps,\bar Z^\eps,A^\eps) - (\hat{Y},\hat{Z},0,0,\widehat\alpha)\big\|_{\H^2} \ \underset{\varepsilon\rightarrow0}{\longrightarrow} \ 0
\]
and
\[
\big\|X^\varepsilon - \widehat X\big\|_{\S^2} \ \underset{\varepsilon\rightarrow0}{\longrightarrow} \ 0.
\]
\end{theorem}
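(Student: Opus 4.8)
The plan is to deduce the whole statement from the single convergence $\|A^\eps-\hat A\|_{\H^2}\to0$ of the fast component, which I would establish by rerunning the proof of Theorem~\ref{T:Convergence} with the forward SDE \eqref{SDE_X_alpha} there replaced by a \emph{control-frozen} version of the McKean--Vlasov FBSDE. Concretely, for each $\alpha\in\H_k^2$ consider the forward--backward system obtained from \eqref{eq:SMP_FBSDE} by replacing the feedback control $\widehat\alpha(t,X_t,Y_t,\P_{(X_t,Y_t)})$ with the given process $\alpha_t$; under Assumptions~\ref{ass:smooth}--\ref{ass:conv}, \cite[Theorem~3.4 and the accompanying stability estimate]{RSZ20_opt_control} (the verification being the one already carried out in Lemma~\ref{lem:existence_uniq}) yields a unique solution $(X^\alpha,Y^\alpha,Z^\alpha)$ together with a Lipschitz bound
\[
\|X^\alpha-X^{\alpha'}\|_{\S^2}^2+\|Y^\alpha-Y^{\alpha'}\|_{\S^2}^2+\|Z^\alpha-Z^{\alpha'}\|_{\H^2}^2 \ \le \ L\,\|\alpha-\alpha'\|_{\H^2}^2,
\]
which plays here the role of estimate \eqref{EstimateX-X'2}. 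By uniqueness $(X^{A^\eps},Y^{A^\eps},Z^{A^\eps})$ is the slow part $(X^\eps,Y^\eps,Z^\eps)$ of \eqref{eq:two scales_FBSDE}, while $(X^{\hat A},Y^{\hat A},Z^{\hat A})=(\hat X,\hat Y,\hat Z)$ because along $(\hat X,\hat Y)$ the feedback equals $\hat A_t$ by \eqref{hatA}. Hence the bound above reduces the theorem to $\|A^\eps-\hat A\|_{\H^2}\to0$, since it then controls $\|X^\eps-\hat X\|_{\S^2}$, $\|Y^\eps-\hat Y\|_{\H^2}$ and $\|Z^\eps-\hat Z\|_{\H^2}$ by $\|A^\eps-\hat A\|_{\H^2}$.

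Next I would set up the dictionary with Section~\ref{sec:Tik}. Writing $\mathfrak B(t,x,\mu,y,a,\nu):=-\partial_aH(t,x,\mu,y,a,\nu)-\tilde\E[\partial_\nu H(t,\tilde X,\mu,\tilde Y,\tilde A,\nu)(a)]$, the fast equation of \eqref{eq:two scales_FBSDE} has exactly the structure of the second line of \eqref{TwoScale}, with fast drift $\mathfrak B$ and ``slow state'' the pair $(X,Y)$ produced by the control-frozen FBSDE --- after writing the driving noise as $(W,\tilde W)$ with block-diagonal diffusions, as permitted by Remark~\ref{R:TwoW}. The three structural inputs used in the proof of Theorem~\ref{T:Convergence} are then available here: the dissipativity \eqref{Monotonicity} is replaced by
\[
\E\big[\big\langle\mathfrak B(t,\xi,\P_\xi,\zeta,\eta,\P_\eta)-\mathfrak B(t,\xi,\P_\xi,\zeta,\eta',\P_{\eta'}),\eta-\eta'\big\rangle\big] \ \le \ -(\lambda_1+\lambda_2)\,\E\big[|\eta-\eta'|^2\big],
\]
obtained by adding the strong convexity inequality \eqref{H:convex} in $(a,\nu)$ to itself with the two points exchanged, exactly as in the $G$-monotonicity computation of Lemma~\ref{lem:existence_uniq} and of \cite[Proposition~3.2]{RSZ20_opt_control}, the separated form $\partial_\nu f=h_1+h_2$ from Assumption~\ref{ass:smooth} being used to handle the measure argument; the Lipschitz continuity of $\mathfrak B$ in $(x,\mu,y)$ and its linear growth follow from the affine form of $b$ and the hypotheses on $\partial_af$ and $\partial_\nu f$ in Assumption~\ref{ass:smooth}; and the zero-set property \eqref{widehat_alpha} is replaced by the critical-point identity \eqref{eq:critical}, i.e.\ $\mathfrak B(\cdot,\hat X_\cdot,\P_{\hat X_\cdot},\hat Y_\cdot,\hat A_\cdot,\P_{\hat A_\cdot})=0$.

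With this dictionary the proof of Theorem~\ref{T:Convergence} transfers almost verbatim. After recording, as in Proposition~\ref{P:System_eps} and by a Gronwall/dissipativity estimate, a uniform-in-$\eps$ a priori bound for the solution of \eqref{eq:two scales_FBSDE} --- which, together with $\beta^\eps=o(\eps)$ and the linear growth of $G$, kills the fast diffusion term in the limit --- I would pick, via Lemma~\ref{L:Density}, processes $\alpha^n_t=\int_0^tF^n_s\,ds$ with $\alpha^n\to\hat A$ in $\H_k^2$, let $(X^n,Y^n,Z^n)$ solve the control-frozen FBSDE with control $\alpha^n$, apply It\^o's formula to $\textup{e}^{\frac{2(\lambda_1+\lambda_2)}{3\eps}t}|A^\eps_t-\alpha^n_t|^2$, and decompose the drift cross-term into a dissipative piece (absorbed by the exponential weight), a piece Lipschitz in $(X,Y)$ controlled through the control-frozen stability bound by $\|A^\eps-\alpha^n\|_{\H^2}$, and the remainder involving $\mathfrak B(s,X^n_s,\P_{X^n_s},Y^n_s,\alpha^n_s,\P_{\alpha^n_s})$. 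Reproducing the Young and Gronwall steps leading to \eqref{ProofLimit2}--\eqref{ProofLimit3}, this yields $\limsup_{\eps\to0}\|A^\eps-\alpha^n\|_{\H^2}^2\le C\,\|\mathfrak B(\cdot,X^n_\cdot,\P_{X^n_\cdot},Y^n_\cdot,\alpha^n_\cdot,\P_{\alpha^n_\cdot})\|_{\H^2}^2$; and since $(X^n,Y^n)\to(\hat X,\hat Y)$ in $\S^2$ by the stability bound, a subsequence makes $\mathfrak B(\cdot,X^n_\cdot,\dots)\to\mathfrak B(\cdot,\hat X_\cdot,\dots,\hat A_\cdot,\dots)=0$ $\P\otimes dt$-a.e., whence $\|\mathfrak B(\cdot,X^n_\cdot,\dots)\|_{\H^2}\to0$ by the linear growth of $\mathfrak B$ and dominated convergence. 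Combined with $\|\alpha^n-\hat A\|_{\H^2}\to0$ this gives $\|A^\eps-\hat A\|_{\H^2}\to0$, and the Lipschitz bound above then delivers the remaining convergences.

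The difficulty is bookkeeping rather than a new idea, and it is concentrated in two places. First, extracting the clean dissipativity inequality and the Lipschitz estimate for $\mathfrak B$ in the \emph{extended} mean field setting: because \eqref{H:convex} is phrased in lifted form (with an $\E[\langle\partial_\nu H(\alpha),\alpha'-\alpha\rangle]$ term) and $\mathfrak B$ carries the nonlinear law dependence $\tilde\E[\partial_\nu H(\dots)(a)]$, the reduction genuinely relies on the separated-form assumption on $\partial_\nu f$ and essentially re-imports the computations of \cite[Proposition~3.2]{RSZ20_opt_control}. Second, one must check that the proof of Theorem~\ref{T:Convergence} uses the control-frozen dynamics only through the stability bound \eqref{EstimateX-X'2} --- so that the FBSDE substitutes for the forward SDE with no structural change --- and that the additional $Y^\eps-Y^n$ terms created by the $y$-dependence of $\partial_aH$ and $\partial_\nu H$ are absorbed by the same bound; this is routine but has to be tracked carefully through the estimates \eqref{I_1}--\eqref{ProofLimit4}.
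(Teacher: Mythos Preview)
Your strategy of reducing everything to $\|A^\eps-\hat A\|_{\H^2}\to0$ via a control-frozen FBSDE map $\alpha\mapsto(X^\alpha,Y^\alpha,Z^\alpha)$ is natural, but the claim that the proof of Theorem~\ref{T:Convergence} transfers ``with no structural change'' once the forward SDE is replaced by this FBSDE is where the argument breaks. The estimate you call ``routine'' --- absorbing the $Y^\eps-Y^n$ terms through the stability bound --- is not: the place where \eqref{EstimateX-X'2} is used in the proof of Theorem~\ref{T:Convergence} is inside the bound \eqref{I_1}, and there it is applied on the sub-interval $[0,s]$ for each $s\in[0,t]$, i.e.\ one uses the \emph{causal} inequality $\E|X_s^\eps-X_s^n|^2\le C\int_0^s\E|A_r^\eps-\alpha_r^n|^2\,dr$. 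This causality is what makes the Fubini step produce the extra factor $\eps$ that kills the $1/\eps$ in front. Your FBSDE stability bound $\|Y^\alpha-Y^{\alpha'}\|_{\S^2}^2\le L\|\alpha-\alpha'\|_{\H^2}^2$ is inherently global on $[0,T]$ (the backward component at time $s$ depends on the control on $[s,T]$), so for the $Y$-contribution to $I_1$ you only obtain, after Cauchy--Schwarz and Young, a term of the form $C_0\|A^\eps-\alpha^n\|_{\H^2}^2$ with $C_0$ depending on $K,L,\lambda$ but not on $\eps$. Carrying this through Gronwall and integrating in $t$ yields $(1-C_0Te^{CT})\|A^\eps-\alpha^n\|_{\H^2}^2\le\dots$, i.e.\ a smallness condition $C_0Te^{CT}<1$ which you cannot remove by iterating on sub-intervals precisely because the FBSDE couples forward and backward components.

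The paper's proof circumvents this by a different decomposition: instead of freezing the control, it freezes the backward pair at its optimal value $(\hat Y,\hat Z)$ and introduces the auxiliary forward two-scale system $(\hat X^\eps,\hat A^\eps)$ solving \eqref{eq:two scales_YZ_frozen} with $(\bar Y,\bar Z)=(\hat Y,\hat Z)$. In that system the fast drift depends on the random but \emph{fixed} process $(\hat Y,\hat Z)$ and on the genuinely forward quantity $\hat X^\eps$, so the causal estimate \eqref{EstimateX-X'2} is available and Theorem~\ref{T:Convergence} applies (after localizing $(\hat Y,\hat Z)$ by stopping times to recover linear growth, which is the content of Proposition~\ref{prop:conv_frozen}). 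This gives $(\hat X^\eps,\hat A^\eps)\to(\hat X,\hat A)$. The remaining difference $(X^\eps,Y^\eps,Z^\eps,A^\eps)-(\hat X^\eps,\hat Y,\hat Z,\hat A^\eps)$ is then handled, not by rerunning the Tikhonov argument, but by the FBSDE stability result \cite[Lemma~A.1]{RSZ20_path_reg} applied to the full two-scale FBSDE \eqref{eq:two scales_FBSDE}: the two quadruples solve the same system up to a data perturbation in the backward generator and terminal condition which, by Lipschitzness of $\partial_xH,\partial_\mu H,\partial_xg,\partial_\mu g$, is bounded by $\|\hat X^\eps-\hat X\|_{\S^2}+\|\hat A^\eps-\hat A\|_{\H^2}\to0$. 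In short, the paper uses the FBSDE stability only \emph{once}, at the outer level, and keeps the Tikhonov machinery purely forward; your plan tries to push the FBSDE inside the Tikhonov estimate, and the loss of causality in $Y$ is exactly what obstructs that.
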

\begin{proof}
The result follows directly from Theorem \ref{T:ConvergenceII}.
\end{proof}

\subsection{A McKean-Vlasov linear quadratic example}
As an illustration of our results, we consider the classical Linear Quadratic (LQ) control problem extended to the McKean-Vlasov setting. After briefly introducing the model, we outline a method to find its solution which is based on the derivation of suitable Riccati equations. The classical approach of solving \eqref{eq:SMP_FBSDE} will be used as a benchmark for evaluating the performance of our novel approach, which instead relies on solving the two-scale system \eqref{eq:two scales_FBSDE}. We elaborate on the following two main points. First, we focus on the goodness of the approximation of the optimal control and the optimally controlled state, as $\varepsilon$ goes to zero. Second, we argue why using the two scale approximation may be preferable to the classical approach when the computation of the argmin is costly, providing evidence that our new method can strongly outperform the standard one.

We now present the model. In the rest of the section, we use the notation $\overline{\mu}:=\int x d\mu$ to denote the first moment of a certain distribution $\mu\in\Pc_2(\R^d)$. We set:

\begin{itemize}
	\item $b(t,x,\mu,a)=b_1(t)x+b_2(t)a+b_3(t)\overline{\mu}$,
	\item $\sigma\in \R^{d\times m}$,
	\item $f(t,x,\mu,a)=\tfrac12 [ x\trans q_1(t)x+ \overline{\mu}\trans q_2(t)\overline{\mu}+a\trans r(t)a]$,
	\item $g(x,\mu)= \tfrac12[ x\trans q_1x + \overline{\mu}\trans q_2 \overline{\mu} ]$,
	\item $A=\R^k$,
\end{itemize}
for some $q_1,q_2\in \R^{d\times d}$ and some bounded measurable functions $b_1(t)$, $b_2(t)$, $b_3(t)$, $q_1(t)$, $q_2(t)$, $r(t)$ taking values respectively in $\R^{d\times d}$, $\R^{d\times k}$, $\R^{d\times d}$, $\R^{d\times d}$, $\R^{d\times d}$, $\R^{d\times d}$, with $q_1(t)$, $q_2(t)$, $q_1$, $q_2$, $r(t)$ being symmetric and such that $q_1(t), q_2(t), q_1, q_2 \geq 0$, $r(t)\geq \lambda_r I_k$, for some $\lambda_r>0$, where $I_k$ is the identity matrix of order $k$. The Hamiltonian of the system reads as
\begin{align*}
	H(t,x,\mu,y,z,a) \ = \ \big\langle b_1(t)x+b_2(t)a +b_3(t)\overline{\mu},y\big\rangle + \text{tr}\big(\sigma z\trans\big) +\frac{1}{2}\big(x\trans q_1(t)x+ \overline{\mu}\trans q_2(t) \overline{\mu} +a\trans r(t)a\big).
\end{align*}
Therefore, solving the first-order condition, we see that the minimizer of $H$ is given by 
\begin{equation}\label{eq:opt_control_LQ1}
	\widehat{\alpha}(t,x)=-r(t)^{-1}b_2(t)\trans y.
\end{equation}
The McKean-Vlasov system \eqref{eq:SMP_FBSDE} reads as follows:
\begin{equation}\label{eq:SMP_LQ1}
	\left\{\begin{aligned}
		dX_t &=\big(b_1(t)X_t-b_2(t)r(t)^{-1}b_2(t)\trans Y_t+b_3(t)\E[X_t]\big) dt+\sigma dW_t,\\
		dY_t & =-\Big( (b_1(t)+b_3(t))\trans Y_t +q_1(t) X_t + q_2(t)\E[X_t])\Big) dt
		+ Z_tdW_t,\\
		X_0&=\xi, \quad  Y_T= q_1  X_T +  q_2 \E[X_T].
	\end{aligned}\right.
\end{equation}

\begin{figure}
	\centering
	\includegraphics[width=0.49\textwidth]{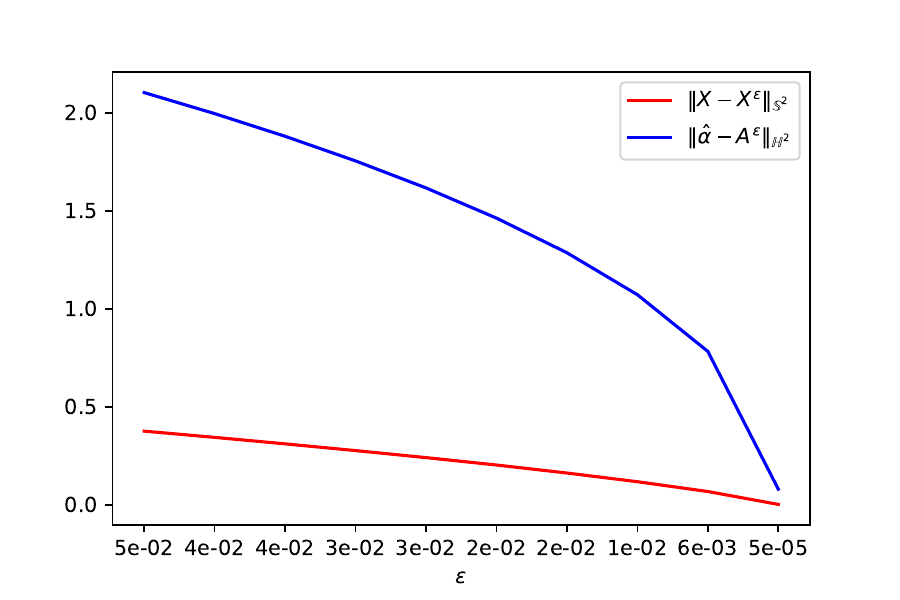}
	\includegraphics[width=0.49\textwidth]{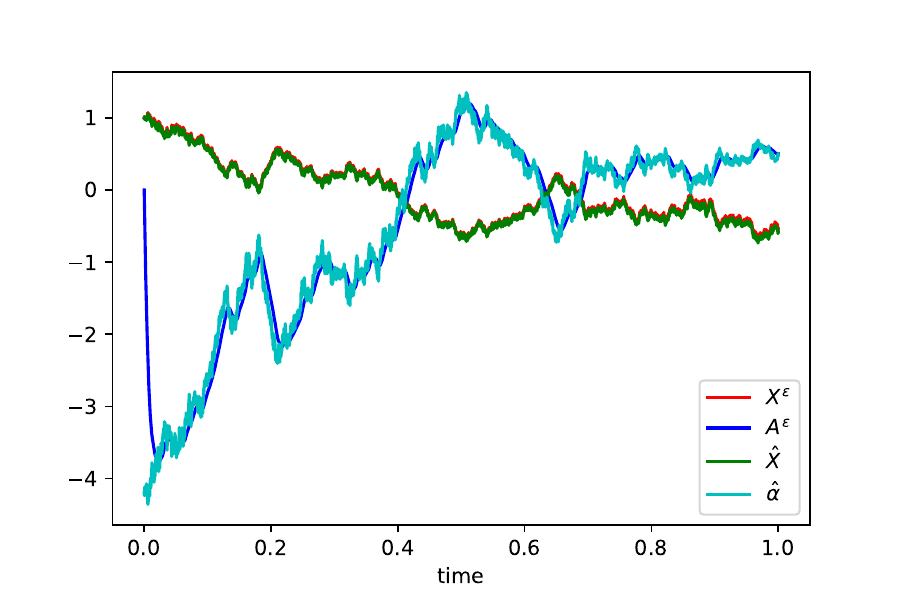}    
	\caption{{\footnotesize Approximation errors with varying $\varepsilon$ (Left). An example of trajectory of $X,\hat{\alpha},X^\varepsilon,A^\varepsilon$ for $\eps=5\cdot10^{-5}$ (Right).} }
	\label{fig:numerics}
\end{figure}

\noindent By taking the expectation in \eqref{eq:SMP_LQ1}, one can find an ODE system for $\E[X_t]$ and $\E[Y_t]$. Given the linearity of the system, one expects its solution to be of the form $\E[Y_t]=\lambda_1(t)\E[X_t]$, for a deterministic function $\lambda_1$. Calculating the differential of this expression and comparing it to the one from the ODE system, one obtains a first Riccati equation: 
\begin{align*}
&\lambda_1^\prime(t)+\lambda_1(t)\big( b_1(t)+b_3(t) - b_2(t)r(t)^{-1}b_2(t)\lambda_1(t)\big) 
+ (b_1(t)+b_3(t))\trans \lambda_1(t) + q_1(t) +q_2(t)=0, \\
&\lambda_1(T) = q_1+q_2.
\end{align*}
When the solution exists, it yields expressions for $\E[X_t]$ and $\E[Y_t]$ that can be plugged into $\eqref{eq:SMP_LQ1}$. This makes the system of FBSDE no longer of McKean-Vlasov type. Again, given the linearity of the system, one expects its solution to be of the form $Y_t=\lambda_2(t)X_t+\theta_2(t)$, for some deterministic functions $\lambda_2$, $\theta_2$. Computing the (stochastic) differential of this expression and comparing to the one of the FBSDE system, one obtains the following Riccati equations: 
\[ 
	\begin{split}
&\lambda_2^\prime(t)+\lambda_2(t)\big( b_1(t) - b_2(t)r(t)^{-1}b_2(t)\lambda_2(t)\big) 
+ (b_1(t)+b_3(t))\trans \lambda_2(t) + q_1(t) =0, \\
& \theta_2^\prime(t) -\lambda_2(t) b_2(t)r(t)^{-1}b_2(t)\theta_2(t) 
+ (b_1(t)+b_3(t))\trans \theta_2(t) + q_2(t) \E[X_t] + \lambda_2(t) b_3(t) \E[X_t] =0, \\
&\lambda_2(T) = q_1, \qquad \theta_2(T)= q_2. 	
\end{split} 
\]
We can then exploit the solutions $\lambda_2$, $\theta_2$ to decouple system \eqref{eq:SMP_LQ1} and solve the resulting SDE for $X$ using a standard Euler-Maruyama scheme.

For the analysis of more general linear-quadratic McKean-Vlasov control problems with controlled volatility and dependence also on the mean of the control, under slightly different assumptions that ours, we refer to \cite{Yong2013}. This section, however, aims at illustrating that the two-scale method performs better that the standard one, which turns out to be true even for the simplest linear-quadratic examples (even non McKean-Vlasov). We thus leave the numerical study of the more complicated case of controlled volatility for future research.

\paragraph{The two-scale approximation.}

The FBSDE system reads as follows:
\begin{equation}\label{eq:two scales_LQ2}
	\left\{\begin{aligned}
		dX^\varepsilon_t=&\big(b_1(t)X^\varepsilon_t+b_2(t)A^\varepsilon_t+b_3(t)\E[X^\varepsilon_t]\big) dt+\sigma dW_t,\\
		dY^\varepsilon_t=&-\Big((b_1(t)+b_3(t))\trans Y^\varepsilon_t +q_1(t) X^\varepsilon_t + q_2(t)\E[X^\varepsilon_t]\Big) dt
		+ Z_t^\varepsilon dW_t,\\
		\varepsilon dA^\varepsilon_t=&- \overline{Y}_t^\varepsilon dt + \beta^\varepsilon \Gamma d W_t , \\
		d \overline{Y}_t^\varepsilon =& -
		\big(b_2(t)\trans Y^\varepsilon_t+r(t)A^\varepsilon_t\big)dt
		+ \overline{Z}_t^\varepsilon dW_t \\
		X_0=&\xi,\quad Y^\varepsilon_T= q_1  X^\varepsilon_T + q_2 \E[X^\varepsilon_T], 
		\quad \overline{Y}^\varepsilon_T = 0, \quad A_0^\varepsilon = \eta.
	\end{aligned}\right.
\end{equation}
To solve this linear system, we proceed as above. We first consider the ODE for the expectations $\E[X_t]$, $\E[Y_t]$, $\E[A_t]$, $\E[\overline{Y}_t]$. Making the ansatz 
$(\E[Y_t], \E[\overline{Y}_t])\trans =\Lambda_1(t) (\E[X_t], \E[A_t])\trans$, for a deterministic matrix $\Lambda_1(t) \in\R^{(d+k)\times (d+k)}$, we derive the ODE
\[
	\begin{split}
		&\Lambda_1^\prime(t)+\Lambda_1(t)\big(C_1(t) + D_1(t)\Lambda_1(t)\big) 
		+ E_1(t)  \Lambda_1(t) + F_1(t)=0 , \\
		& \Lambda_1(T)=  G_1,
	\end{split}
\] 
where
\begin{align*}
	C_1(t)&= \left[ {\begin{array}{cc}
			b_1(t)+b_3(t) & b_2(t) \\
			0 & 0  \\
	\end{array} } \right] ,
\quad D_1(t)= \left[ {\begin{array}{cc}
		0 & 0 \\
		0 & -I_k/\varepsilon  \\
\end{array} } \right] ,
\quad E_1(t)= \left[ {\begin{array}{cc}
		(b_1(t)+b_3(t))\trans & 0 \\
		b_2(t)\trans & 0  \\
\end{array} } \right] ,
\\
F_1(t) &= \left[ {\begin{array}{cc}
		q_1(t)+q_3(t) & 0) \\
		0 & r(t)  \\
\end{array} } \right] ,
\qquad G_1=  \left[ {\begin{array}{cc}
	 q_1+q_2 & 0 \\
		0 & 0  \\
\end{array} } \right] .
\end{align*}
Solving the equation for the mean, we get $\E[X_t]$, so that we can solve the FBSDE \eqref{eq:two scales_LQ2}: making the ansatz $(Y_t, \overline{Y}_t)\trans =\Lambda_2(t) (X_t, A_t)\trans + \Theta_2(t)$, we find the Riccati equations
\[
	\begin{split}
		&\Lambda_2^\prime(t)+\Lambda_2(t)\big(C_2(t) + D_1(t)\Lambda_2(t)\big) 
	+ E_1(t)  \Lambda_1(t) + F_2(t)=0 , \\ 
	& \Theta_2^\prime(t) + \Lambda_2(t)D_1(t) \Theta_2(t) + E_1(t) \Theta_2(t) +\Lambda_2(t) \mu_2(t) + \nu_2(t) =0, \\
	& \Lambda_2(T)=  G_2, \qquad \Theta_2(T) = g_2,
	\end{split}
\] 
where 
\begin{align*}
	C_2(t)&= \left[ {\begin{array}{cc}
			b_1(t) & b_2(t) \\
			0 & 0  \\
	\end{array} } \right] ,
	\quad 
	F_2(t) = \left[ {\begin{array}{cc}
			q_1(t) & 0) \\
			0 & r(t)  \\
	\end{array} } \right] ,
	\qquad G_2=  \left[ {\begin{array}{cc}
			q_1 & 0 \\
			0 & 0  \\
	\end{array} } \right] , \\
 \mu_2(t) &= \left[ {\begin{array}{cc}
		b_3(t)\E[X_t] \\
		0   \\
\end{array} } \right], 
\qquad \nu_2(t) = \left[ {\begin{array}{cc}
		q_2(t)\E[X_t] \\
		0   \\
\end{array} } \right],
\qquad  g_2 = \left[ {\begin{array}{cc}
		q_2\E[X_T] \\
		0   \\
\end{array} } \right].
\end{align*}
We can then exploit the solutions $\Lambda_2$, $\Theta_2$ to decouple system \eqref{eq:two scales_LQ2} and solve the resulting SDEs for $(X^\varepsilon, A^\varepsilon)$ using a standard Euler-Maruyama scheme.

In Figure \ref{fig:numerics}, we present the results of our experiments on a one-dimensional problem, namely $d=k=m=1$, with coefficients $b_1(t)=b_2(t)=b_3(t)=q_1(t)=q_1=r=\Gamma=1$, $q_2(t)=q_2=0.1$, $\beta^\varepsilon= \varepsilon^2$ and initial conditions $\xi=1$, $\eta=0$. We solve \eqref{eq:two scales_LQ2} for different values of $\varepsilon$, varying from $5\cdot10^{-2}$ to $5\cdot10^{-5}$. On the left of Figure \ref{fig:numerics} we plot the $\S^2$-error of $X^\varepsilon$ with respect to the optimally controlled state $\hat{X}$ and the $\H^2$-error of $A^\varepsilon$ with respect to $\hat{\alpha}$ which, in this case, can be computed explicitly and it is given by \eqref{eq:opt_control_LQ1}. We can appreciate how the errors decrease with $\varepsilon$ and they become small, as we expect from Theorem \ref{thm:main_SMP}. On the right of Figure \ref{fig:numerics} we show an example of a trajectory for the quantities $(\hat{X},\hat{\alpha},X^\varepsilon,A^\varepsilon)$ for $\varepsilon=5\cdot10^{-5}$. We observe how both $X^\varepsilon$ and $A^\varepsilon$ are good approximations of $\hat{X}$ and $\widehat{\alpha}$ respectively. We note that $X^\varepsilon$ has the same initial point as $\hat{X}$ and this is a known parameter. In general, how to initialize $A^\varepsilon$ is not a priori clear and an initial condition far from the true optimal control may obviously lead to a larger approximation error.

\paragraph{The curse of dimension.}
In this paragraph we put ourselves in a classical setting (non McKean-Vlasov), which is a particular case of the above general setting obtained by choosing $b_3(t)=q_2(t)=q_2= 0$. For the coefficients $b_1(t),b_2(t),q_1(t),q_1$ we simply take the identity matrix (in particular, we assume that $d=k$). Finally, we choose
\[
r(t)=[r_{ij}],\quad\text{with}\quad r_{ij} = \delta_{ii} (1+\exp(-i))^{-1},
\]
where $\delta$ is the Kronecker delta. The purpose of this example is two-fold. First, we show that even in the framework of this simple example, where also the minimizer of the Hamiltonian has an explicit solution, the classical approach is severely outperformed by the two-scale approximation, as the dimension of the state space grows. Second, we stress the fact that the results of the paper are new even in the non McKean-Vlasov setting.

We start noting that, although explicitly given by \eqref{eq:opt_control_LQ1}, the computation of the minimizer requires several matrix operations including inversion, transpose and matrix multiplication. All of them can be costly when the dimension is large. We experimented the classical approach and the two-scale approach using the specifications above and changing only the dimension $d$. The result are presented in Table \ref{table:dimensions}, where we referred to the approach using the minimization of the Hamiltonian as the \emph{argmin} approach (AM) and compared it to the \emph{two-scale} (TS) approach with $\eps=5\cdot10^{-5}$.
\begin{table}
	\begin{center}
		\begin{tabular}{|c|ccc|ccc|}
			\hline
			Dimension & Exec. time AM & Exec. time TS& Time saved & Exp. cost AM  & Exp. cost TS & Rel. Error\\
			\hline
			10  &      0.78s &      1.10s &    -41.9\% &      18.65 &     19.05 &    2.17\%\\
			20  &      0.95s &      1.22s &   -28.73\% &      38.53 &     39.36 &    2.13\%\\
			30  &      1.50s &      1.38s &     8.17\% &      61.16 &     62.47 &    2.14\%\\
			50  &      6.49s &      1.97s &    69.67\% &     105.70 &    107.99 &    2.17\%\\
			75  &     37.55s &      2.88s &    92.33\% &     158.01 &    161.39 &    2.14\%\\
			100 &     80.38s &      4.34s &    94.60\% &     212.72 &    217.28 &    2.14\%\\
			125 &    153.32s &      8.27s &    94.60\% &     269.95 &    275.74 &    2.14\%\\
			150 &    298.89s &     12.06s &    95.97\% &     322.55 &    329.49 &    2.15\%\\
			200 &    761.94s &     27.58s &    96.38\% &     432.70 &    442.00 &    2.15\%\\	
			\hline
		\end{tabular}
		\caption{{\footnotesize Execution times and expected costs for the argmin (AM) approach and the two-scale (TS) approach. Computed on a Surface 6 pro laptop with Intel(R) Core(TM) i5-8350U CPU 1.70 GHz.}}
		\label{table:dimensions}
	\end{center}
\end{table}
As we can see from the results of the table, in lower dimension it is still more convenient to minimize the Hamiltonian. As soon as the dimension increases, the two-scale approach performs significantly better than the argmin one, with an essentially constant error of approximation. To appreciate visually the data of Table \ref{table:dimensions} we can see, in Figure \ref{fig:ex_times}, the difference of the execution times of the two algorithms, when the dimension increases.
\begin{figure}
	\centering
	\includegraphics[width=0.6\textwidth]{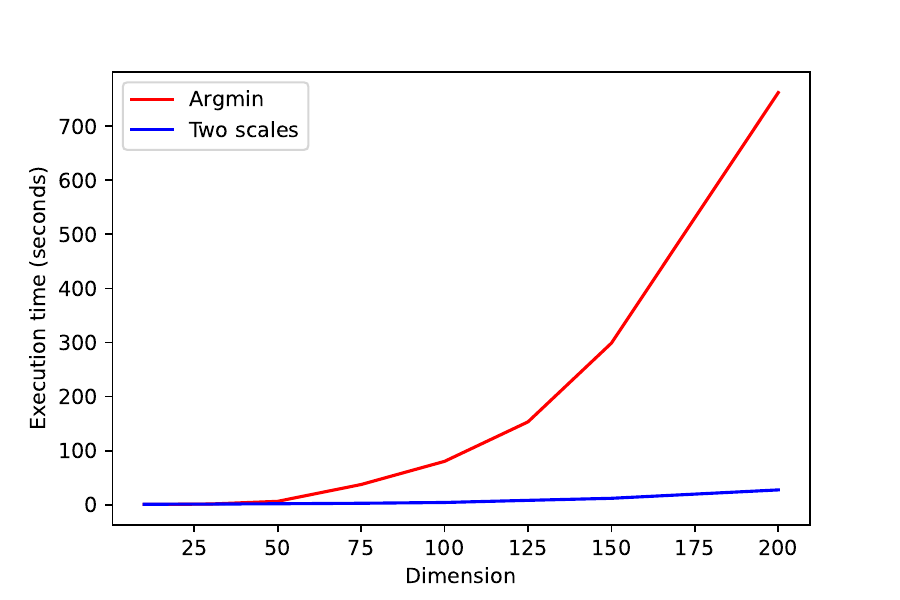} 
	\caption{{\footnotesize 
			Execution times with varying dimension.}}
	\label{fig:ex_times}
\end{figure}
In addition to the data reported in Table \ref{table:dimensions}, it is worth mentioning that with $d=500$ the two-scale algorithm concludes in 2m:18s, whereas the argmin one has an estimated execution time of 7h:57m:45s. From these numerical results we conclude that the two-scale method may represent a valid fast alternative for solving problems in high dimension.

\appendix

\section{Lions differentiability}
\label{S:App}

There are various notions of differentiability for functions of probability measures. In the present paper we adopt the definition firstly given by P.-L. Lions in the series of lectures \cite{Lions}. In the present appendix we present the essential features of such a definition, while we refer to \cite[Chapter 5]{bookMFG} for more details.\\
Lions' definition is based on the idea of \emph{lifting}, which allows to interpret a derivative with respect to a measure as a Fr\'echet derivative in the $L^2$ space of random variables. Firstly, fix a probability space $(\Omega,\Fc,\P)$, not necessarily the same adopted in the previous sections of the paper, satisfying the following property: there exists an $\Fc$-measurable random variable $U\colon\Omega\rightarrow\R$ having uniform distribution on $[0,1]$. We recall that the probability space adopted in the previous sections satisfies such a property and, in particular, $U$ can be taken $\Gc$-measurable. We also recall from Remark \ref{R:Uniform} that the existence of $U$ is equivalent to the following property:
\[
\Pc_2(\R^n) \ = \ \big\{\P_\xi\colon\xi\in L^2(\Omega,\Fc,\P;\R^n)\big\}, \qquad \forall\,n\in\N.
\]
\begin{definition}
Given a function $u\colon\Pc_2(\R^d)\rightarrow\R$, we say that $\tilde u\colon L^2(\Omega,\Fc,\P;\R^d)\rightarrow\R$ is the \textbf{lifting} of $u$ if it holds that
\[
\tilde u(\xi) \ = \ u(\P_\xi), \qquad \forall\,\xi\in L^2(\Omega,\Fc,\P;\R^d).
\]
\end{definition}

\begin{definition}
Given a function $u\colon\Pc_2(\R^d)\rightarrow\R$ and a probability $\mu_0\in\Pc_2(\R^d)$, we say that $u$ is \textbf{differentiable in the sense of Lions} or $L$-\textbf{differentiable} at $\mu_0$ if there exists $\xi_0\in L^2(\Omega,\Fc,\P;\R^d)$ such that $\P_{\xi_0}=\mu_0$ and its lifting $\tilde u$ is differentiable in the sense of Fr\'echet at $\xi_0$.
\end{definition}

\begin{definition}
Let $u\colon\Pc_2(\R^d)\rightarrow\R$ such that its lifting $\tilde u$ is everywhere differentiable in the sense of Fr\'echet. We say that $u$ admits $L$-\textbf{derivative} if there exists a function $\partial_\mu u$ defined on $\Pc_2(\R^d)$, such that $\mu_0\mapsto\partial_\mu u(\mu_0)(\cdot)\in L^2(\R^d,\Bc(\R^d),\mu_0;\R^d)$ and
\[
D\tilde u(\xi_0) \ = \ \partial_\mu u(\mu_0)(\xi_0), \qquad \P\text{-a.s.}
\]
for every $\xi_0\in L^2(\Omega,\Fc,\P;\R^d)$ with $\P_{\xi_0}=\mu_0$. The function $\partial_\mu u$ is called $L$-\textbf{derivative} of $u$.
\end{definition}

\begin{proposition}
Let $u\colon\Pc_2(\R^d)\rightarrow\R$ such that its lifting $\tilde u$ is everywhere differentiable in the sense of Fr\'echet and $D\tilde u\colon L^2(\Omega,\Fc,\P;\R^d)\rightarrow L^2(\Omega,\Fc,\P;\R^d)$ is a continuous function. Then, for every $\mu_0\in\Pc_2(\R^d)$, there exists a Borel-measurable function $g_{\mu_0}\colon\R^d\rightarrow\R^d$ such that
\[
D\tilde u(\xi_0) \ = \ g_{\mu_0}(\xi_0), \qquad \P\text{-a.s.}
\]
for every $\xi_0\in L^2(\Omega,\Fc,\P;\R^d)$ with $\P_{\xi_0}=\mu_0$.
\end{proposition}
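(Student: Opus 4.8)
The plan is to build $g_{\mu_0}$ in two stages: first use directional derivatives of $u$ to identify the only possible candidate, then invoke the continuity of $D\tilde u$ to show that $D\tilde u(\xi_0)$ is genuinely $\sigma(\xi_0)$-measurable, hence of the required functional form. For the first stage, fix a bounded Borel map $\phi\colon\R^d\to\R^d$ and $\xi_0$ with $\P_{\xi_0}=\mu_0$. The curve $t\mapsto\xi_0+t\phi(\xi_0)$ is affine in $L^2$, so Fréchet differentiability of $\tilde u$ gives that $t\mapsto u\big((\mathrm{id}+t\phi)_{\#}\mu_0\big)=\tilde u\big(\xi_0+t\phi(\xi_0)\big)$ is differentiable at $0$ with derivative $\E[\langle D\tilde u(\xi_0),\phi(\xi_0)\rangle]$; since the left-hand side depends only on $\mu_0$ and $\phi$, so does this expectation. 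As $\phi$ runs over bounded Borel maps the variables $\phi(\xi_0)$ are dense in the closed subspace $L^2(\Omega,\sigma(\xi_0),\P;\R^d)$, so the orthogonal projection of $D\tilde u(\xi_0)$ onto this subspace equals $\gamma_{\mu_0}(\xi_0)$ for a Borel map $\gamma_{\mu_0}\colon\R^d\to\R^d$ that is determined $\mu_0$-almost everywhere by $\mu_0$ alone, not by the chosen representative $\xi_0$.

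For the second stage one must show $D\tilde u(\xi_0)=\gamma_{\mu_0}(\xi_0)$ $\P$-a.s., i.e.\ that $D\tilde u(\xi_0)$ has no component orthogonal to $L^2(\Omega,\sigma(\xi_0),\P;\R^d)$. I would first treat a finitely-valued $\xi=\sum_i x_i 1_{E_i}$: since the lifting satisfies $\tilde u(\zeta)=\tilde u(\zeta\circ\tau)$ for every measure-preserving transformation $\tau$ of $(\Omega,\Fc,\P)$, the chain rule yields $D\tilde u(\xi)=D\tilde u(\xi\circ\tau)\circ\tau^{-1}$ for every measure-preserving bijection $\tau$, and choosing $\tau$ to permute mass only within each $E_i$ (possible since the space is atomless, a consequence of the existence of $U$, cf.\ \cite{CKGPR}) so that $\xi\circ\tau=\xi$ forces $D\tilde u(\xi)$ to be constant on each $E_i$, i.e.\ $\sigma(\xi)$-measurable. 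For a general $\xi_0$ I would discretize, choosing finitely-valued $\xi_0^{(n)}=\psi_n(\xi_0)$ with $\psi_n$ Borel and $\xi_0^{(n)}\to\xi_0$ in $L^2$; then $\sigma(\xi_0^{(n)})\subset\sigma(\xi_0)$, while by continuity of $D\tilde u$ one has $D\tilde u(\xi_0^{(n)})\to D\tilde u(\xi_0)$ in $L^2$. Since each $D\tilde u(\xi_0^{(n)})$ lies in the closed subspace $L^2(\Omega,\sigma(\xi_0),\P;\R^d)$, so does the limit, which gives $\sigma(\xi_0)$-measurability; combined with the first stage this yields $D\tilde u(\xi_0)=\gamma_{\mu_0}(\xi_0)$ $\P$-a.s. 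One then takes $g_{\mu_0}$ to be any Borel representative of $\gamma_{\mu_0}$, and it works simultaneously for all $\xi_0$ of law $\mu_0$ because $\gamma_{\mu_0}$ was produced independently of the representative.

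The crux of the argument is the second stage. Directional derivatives only probe directions of the form $\phi(\xi_0)$ and therefore cannot, on their own, exclude a component of $D\tilde u(\xi_0)$ transverse to $\sigma(\xi_0)$; ruling out such a component genuinely uses the standing hypothesis that $D\tilde u$ is continuous, together with the rigidity of the derivative on discrete laws, which in turn rests on the richness of $(\Omega,\Fc,\P)$ encoded by the uniform variable $U$. Once this measurability is secured, upgrading the almost-sure identity to a single Borel function $g_{\mu_0}$ valid for every representative — and handling the $\P$-completion of $\sigma(\xi_0)$ — is routine; the whole line of reasoning is the one underlying \cite[Chapter 5]{bookMFG}.
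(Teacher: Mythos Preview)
The paper does not supply its own proof here; it simply cites \cite[Proposition 5.25]{bookMFG}. Your two-stage outline---identifying the candidate via directional derivatives along $\phi(\xi_0)$, then establishing $\sigma(\xi_0)$-measurability through invariance under measure-preserving bijections on simple laws and passing to the limit via the assumed continuity of $D\tilde u$---is precisely the argument carried out in that reference, so your proposal is correct and aligned with the paper's citation.
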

\begin{proof}
See \cite[Proposition 5.25]{bookMFG}.
\end{proof}

\begin{definition}
Given a function $u\colon\Pc_2(\R^d)\rightarrow\R$, we say that $u$ is \textbf{continuously} $L$-\textbf{differentiable} if its lifting $\tilde u$ is everywhere differentiable in the sense of Fr\'echet and $D\tilde u\colon L^2(\Omega,\Fc,\P;\R^d)\rightarrow L^2(\Omega,\Fc,\P;\R^d)$ is a continuous function.
\end{definition}

\begin{proposition}
Let $u\colon\Pc_2(\R^d)\rightarrow\R$ be continuously $L$-differentiable. Then, there at most one function $\partial_\mu u\colon\Pc_2(\R^d)\rightarrow\R$ such that:
\begin{enumerate}[\upshape1)]
\item for every $\mu_0\in\mathscr P_2(\R^d)$, the function $x\mapsto\partial_\mu u(\mu_0,x)$ is Borel-measurable;
\item for every $\xi_0\in L^2(\Omega,\Fc,\P;\R^d)$ with $\P_{\xi_0}=\mu_0$, it holds that
\[
D\tilde u(\xi_0) \ = \ \partial_\mu u(\mu_0)(\xi_0), \qquad \P\text{-q.c.}
\]
\item $\partial_\mu u$ is continuous on $\mathscr P_2(\R^d)\times\R^d$.
\end{enumerate}
If such a function exists then we say that $u$ admits \textbf{continuous} $L$-\textbf{derivative}.
\end{proposition}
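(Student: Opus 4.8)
The statement is a uniqueness assertion, so the plan is to suppose that two functions $\partial_\mu u$ and $(\partial_\mu u)'$ both satisfy properties 1)--3) and to show that they agree on all of $\Pc_2(\R^d)\times\R^d$.

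First I would use property 2) to obtain almost-everywhere agreement at each fixed measure. Fix $\mu_0\in\Pc_2(\R^d)$; since $(\Omega,\Fc,\P)$ carries a random variable uniform on $[0,1]$, the equivalence recalled at the beginning of the appendix provides $\xi_0\in L^2(\Omega,\Fc,\P;\R^d)$ with $\P_{\xi_0}=\mu_0$. Applying property 2) to both candidates gives $\partial_\mu u(\mu_0)(\xi_0)=D\tilde u(\xi_0)=(\partial_\mu u)'(\mu_0)(\xi_0)$ $\P$-a.s., and pushing forward by $\xi_0$ this means $\partial_\mu u(\mu_0)(x)=(\partial_\mu u)'(\mu_0)(x)$ for $\mu_0$-a.e.\ $x\in\R^d$. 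Equivalently, the set $D_{\mu_0}:=\{x\in\R^d:\partial_\mu u(\mu_0)(x)=(\partial_\mu u)'(\mu_0)(x)\}$, which is closed by the continuity in property 3), carries full $\mu_0$-mass and therefore contains $\supp\mu_0$.

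The main point is to upgrade this to equality at \emph{every} point, and this is precisely where property 3) is indispensable. Given arbitrary $x^\ast\in\R^d$ and $\mu_0\in\Pc_2(\R^d)$, I would introduce the perturbations $\mu_\varepsilon:=(1-\varepsilon)\mu_0+\varepsilon\delta_{x^\ast}\in\Pc_2(\R^d)$ for $\varepsilon\in(0,1]$. Using the coupling $\pi_\varepsilon:=(1-\varepsilon)(\mathrm{id},\mathrm{id})_\ast\mu_0+\varepsilon(\mu_0\otimes\delta_{x^\ast})$ one gets $\Wc_2(\mu_\varepsilon,\mu_0)^2\le\varepsilon\int_{\R^d}|x-x^\ast|^2\,\mu_0(dx)\to0$ as $\varepsilon\to0$. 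Since $\mu_\varepsilon(\{x^\ast\})\ge\varepsilon>0$, we have $x^\ast\in\supp\mu_\varepsilon\subseteq D_{\mu_\varepsilon}$ by the previous paragraph, i.e.\ $\partial_\mu u(\mu_\varepsilon)(x^\ast)=(\partial_\mu u)'(\mu_\varepsilon)(x^\ast)$ for every $\varepsilon\in(0,1]$. Letting $\varepsilon\to0$ and invoking the joint continuity on $\Pc_2(\R^d)\times\R^d$ from property 3) for both functions, the two sides converge to $\partial_\mu u(\mu_0)(x^\ast)$ and $(\partial_\mu u)'(\mu_0)(x^\ast)$ respectively, hence these are equal. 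As $x^\ast$ and $\mu_0$ were arbitrary, $\partial_\mu u\equiv(\partial_\mu u)'$.

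I do not anticipate any serious obstacle: the argument is short once the perturbation idea is in place. The two things to be careful about are that the $\mu_0$-a.e.\ agreement coming from property 2) genuinely promotes to agreement on the topological support (using that a closed set carrying all the mass of a probability measure contains its support), and that property 3) cannot be dispensed with — without joint continuity in the measure variable, the a.e.\ agreement at each fixed $\mu_0$ would leave the values off $\supp\mu_0$ completely unconstrained and uniqueness would fail.
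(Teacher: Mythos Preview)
Your argument is correct and is precisely the standard one: first obtain $\mu_0$-a.e.\ agreement from property 2), promote it to agreement on $\supp\mu_0$ via continuity, then reach an arbitrary $(\mu_0,x^\ast)$ by the Dirac perturbation $\mu_\varepsilon=(1-\varepsilon)\mu_0+\varepsilon\delta_{x^\ast}$ and joint continuity. The paper does not give its own proof but simply refers to \cite[Remark 5.82]{bookMFG}, where exactly this reasoning is carried out; so your proposal matches the cited source.
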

\begin{proof}
See \cite[Remark 5.82]{bookMFG}.
\end{proof}

\noindent Finally, let $u\colon\Pc_2(\R^d)\rightarrow\R$ be continuously $L$-differentiable and suppose that $u$ admits $L$-derivative $\partial_\mu u\colon\Pc_2(\R^d)\times\R^d\rightarrow\R^d$ continuous. Then, for every fixed $\mu\in\Pc_2(\R^d)$, we consider the derivative of the function $x\mapsto u(\mu)(x)$, which we denote by $\partial_x\partial_\mu u(\mu)(x)$ and is a function from $\Pc_2(\R^d)\times\R^d$ into $\R^{d\times d}$.

\bibliographystyle{abbrv}
\bibliography{MFG_control}

\end{document}